\theoremstyle{definition}
  \newtheorem{thm}{Theorem}[section]
  \newtheorem{lem}[thm]{Lemma}
  \newtheorem{cor}[thm]{Corollary}
  \newtheorem{prop}[thm]{Proposition}
  \newtheorem{ex}[thm]{Example}
  \newtheorem{defi}[thm]{Definition}
\theoremstyle{remark}
\newtheorem{rem}{Remark}
\newcommand{\Z}{\mathbb{Z}}
\def\Z{{\Bbb{Z}}}
\def\>{{\geq }}
\def\<{{\leq }}
\def\cv{X} %cluster variable
\def\cf#1#2{[a_1,a_2,\dots,a_{#1}#2]} %continued fraction
\def\fan{\Delta} %fan
\def\path{\gamma}
\def\tri{T}
\def\AT#1{A(#1)} %ancestral triangle
\def\w{\operatorname{wt}} %weight
\def\Trop{\operatorname{Trop}} %tropical semifield
\def\tb{K} %two-bridge knot
\def\alexander#1{\Delta_{#1}} %Alexander polynomial
\def\alex#1{\alexander{#1}} %Alexander polynomial for two-bridge link
\def\trisign#1#2{e_{#1}{#2}} %sign of triangle
\def\Fp#1{F_{#1}} %F-polynomial
\def\SFp#1{\mathcal{F}_{#1}} %specialized F-polynpmial
\def\alexsign#1{\epsilon_{#1}}
\def\alexexp#1{d_{#1}}
\def\Falex#1{\alexsign{#1} t^{\alexexp{#1}} \SFp{#1}}
\title{Cluster variables, ancestral triangles and Alexander polynomials}
\author{Wataru~Nagai}%
\address{Graduate School of Information Science and Engineering, 
	Tokyo Institute of Technology, 
	2-12-1 Ookayama, Meguro-ku, Tokyo 152-8550, Japan.}
\email{nagai.wataru.aa@gmail.com}
\author{Yuji~Terashima}%
\address{Graduate School of Science, Tohoku University,
	6-3, Aramaki Aza-Aoba, Aoba-ku, Sendai 980-8578, Japan.}
\email{yujiterashima@tohoku.ac.jp}
\begin{document}
	\maketitle
	\section{Introduction}
	Cluster algebras which were introduced by Sergey Fomin 
	and Andrei Zelevinsky \cite{FZ} 
	has found surprising relations with many branches of mathematics and
	mathematical physics, such as representation theory, 
	Donaldson-Thomas theory, low dimensional topology, quantum field theories.
	
	In this paper, we give a new relation between cluster algebras and Alexander polynomials for $2$-bridge knots. 
	A key tool is an ancestral triangle which appeared in both quantum topology \cite{Y, KW} and hyperbolic geometry \cite{HO, HT, H} in different ways. 
	First, we give a combinatorial formula to express 
	cluster variables associated with ancestral triangles as 
	generating polynomials over paths with weights on ancestral triangles. Second, we find specializations of the cluster variables which are identified with Alexander polynomials for $2$-bridge knots. 

	Kyungyong Lee and Ralf Schiffler \cite{LS} give a very interesting formula to express Jones polynomials for $2$-bridge knots as specializations for 
	cluster variables. Therefore, with our results, 
	it implies that cluster variables know 
	both Alexander polynomials and Jones polynomials for $2$-bridge knots through 
	different specializations.
	It would be interesting to generalize the results to any knots. 
	We remark that they use even continued fractions, though we use positive continued fractions. 
	
	The paper is organized as follows. In section \ref{section:ancestral_triangle}, we construct ancestral triangles and define path and its weight. We recall cluster variables in section \ref{section:cluster_variable} and show the cluster expansion formula using paths. We also show some recursions of $F$-polynomial as corollaries of the formula. In section \ref{section:alexander_polynomial}, we review two-bridge links and Alexander polynomials. Then we show some recursions for the Alexander polynomials of two-bridge links. Finally we state and prove the main theorem. 
	
	{\it Acknowledgments.} We would like to thank R. Inoue, S. Kano, A. Kato, Y. Mizuno, T. Tsuda and M. Wakui for valuable discussions. We are grateful to M. Wakui for careful reading of a draft of the paper and valuable comments. This work is partially supported by JSPS KAKENHI Grant Number 17K05243 and by JST CREST Grant Number JPMJCR14D6, Japan.      

\section{Ancestral triangle}
\label{section:ancestral_triangle}
\subsection{Construction of ancestral triangle}
In this section, we construct an ancestral triangle using continued fraction expansion.
Ancestral triangles are introduced by Yamada to compute the Kauffman bracket polynomial of two-bridge knots \cite{Y,KW}. It is also known as Farey diagrams which appear in hyperbolic geometry \cite{HO, HT, H}.

Let $p/q$ be an irreducible fraction between $0$ and $1$. Let $a_1,a_2,\dots,a_n$ be positive integers, we denote a continued fraction
\[
\cfrac{1}{a_1 + \cfrac{1}{a_2 + \cfrac{1}{\ddots + \cfrac{1}{a_n}}}}
\]
by $\cf{n}$.
We say that a continued fraction $\cf{n}$ is a continued fraction expansion of $p/q$ if $p/q = \cf{n}$. Note that all fractions have exactly two continued fraction expansions $\cf{n}{} = \cf{n}{-1, 1}$.

We construct the \emph{ancestral triangle} of $p/q$ by stacking 
triangles. We say that a triangle is right or left as shown in Figure \ref{fig:right_and_left_triangle}. Let $\cf{n}$ be one of continued fraction expansion of $p/q$. First, we stack $a_1 - 1$ right triangles. Then, we stack $a_2$ left triangles, $a_3$ right triangles, $\dots$, and $a_n$ right (resp. left) triangles if $n$ is odd (resp. even). 
We label all vertices with fractions according to following rules. The left vertex of the bottom edge of ancestral triangle is labeled $0/1$, and right of that is $1/1$. If two vertices in a triangle are labeled $p/q$ and $r/s$, then the other is labeled $(p+r)/(q+s)$. We denote the ancestral triangle of $p/q$ by $\AT{p/q}$.

\begin{rem}
	The ancestral triangles constructed above are the mirror images of that of Yamada's definition or Farey diagrams.
\end{rem}

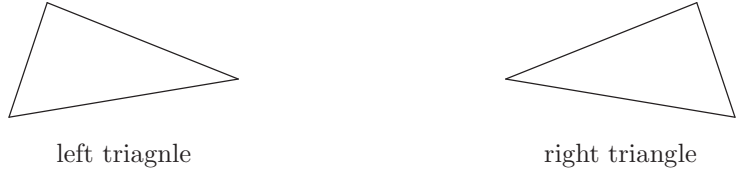
\begin{figure}
\centering
%WinTpicVersion4.32a
{\unitlength 0.1in%
\begin{picture}(38.0000,7.3500)(6.0000,-13.3500)%
% STR 2 0 3 0 Black White  
% 4 1200 1300 1200 1400 5 0 0 0
% left triagnle
\put(12.0000,-14.0000){\makebox(0,0){{\color[named]{Black}{left triagnle}}}}%
% STR 2 0 3 0 Black White  
% 4 3800 1300 3800 1400 5 0 0 0
% right triangle
\put(38.0000,-14.0000){\makebox(0,0){{\color[named]{Black}{right triangle}}}}%
% LINE 2 0 3 0 Black White  
% 6 600 1200 1800 1000 1800 1000 800 600 800 600 600 1200
% 
{\color[named]{Black}{%
\special{pn 8}%
\special{pa 600 1200}%
\special{pa 1800 1000}%
\special{fp}%
\special{pa 1800 1000}%
\special{pa 800 600}%
\special{fp}%
\special{pa 800 600}%
\special{pa 600 1200}%
\special{fp}%
}}%
% LINE 2 0 3 0 Black White  
% 6 4400 1200 3200 1000 3200 1000 4200 600 4200 600 4400 1200
% 
{\color[named]{Black}{%
\special{pn 8}%
\special{pa 4400 1200}%
\special{pa 3200 1000}%
\special{fp}%
\special{pa 3200 1000}%
\special{pa 4200 600}%
\special{fp}%
\special{pa 4200 600}%
\special{pa 4400 1200}%
\special{fp}%
}}%
\end{picture}}%
\caption{Two kind of triangles.}
\label{fig:right_and_left_triangle}
\end{figure}

We use some notations throughout this paper. For a continued fraction $\cf{n}{}$, let $l_k$ = $\sum_{i=1}^{k}a_i$. We denote the first stacked $a_1 - 1$ right triangles by $\fan_1$ and $a_i$ triangles stacked $i$-th by $\fan_i$ for $2 \le i \le n$. For $i = 2,3,\dots,n$, the bottom edge of $\fan_i$ means the edge shared by $\fan_{i}$ and $\fan_{i-1}$. We denote the $i$-th triangle from the bottom by $\tri_i$. 
\begin{ex}
The ancestral triangle of $7/19 = [2,1,2,2]$ is shown on the left of Figure \ref{fig:example_of_ancestral_triangle_construction} and The ancestral triangle of $3/5 = [1,1,2]$ is shown on the right.
\begin{figure}
\begin{center}
	%WinTpicVersion4.32a
{\unitlength 0.1in%
\begin{picture}(41.6000,22.6500)(8.4000,-26.0500)%
% LINE 2 0 3 0 Black White  
% 2 1960 1015 1690 875
% 
{\color[named]{Black}{%
\special{pn 8}%
\special{pa 1960 1015}%
\special{pa 1690 875}%
\special{fp}%
}}%
% LINE 2 0 3 0 Black White  
% 2 1320 1805 2120 1405
% 
{\color[named]{Black}{%
\special{pn 8}%
\special{pa 1320 1805}%
\special{pa 2120 1405}%
\special{fp}%
}}%
% LINE 0 0 3 0 Black White  
% 2 1800 605 1000 2605
% 
{\color[named]{Black}{%
\special{pn 20}%
\special{pa 1800 605}%
\special{pa 1000 2605}%
\special{fp}%
}}%
% LINE 0 0 3 0 Black White  
% 2 2600 2605 1800 605
% 
{\color[named]{Black}{%
\special{pn 20}%
\special{pa 2600 2605}%
\special{pa 1800 605}%
\special{fp}%
}}%
% STR 2 0 3 0 Black White  
% 4 2750 2295 2750 2395 4 0 0 0
% $\cfrac{1}{1}$
\put(27.5000,-23.9500){\makebox(0,0)[rt]{{\color[named]{Black}{$\cfrac{1}{1}$}}}}%
% STR 2 0 3 0 Black White  
% 4 850 2295 850 2395 1 0 0 0
% $\cfrac{0}{1}$
\put(8.5000,-23.9500){\makebox(0,0)[lt]{{\color[named]{Black}{$\cfrac{0}{1}$}}}}%
% STR 2 0 3 0 Black White  
% 4 2520 1995 2520 2095 5 0 0 0
% $\cfrac{1}{2}$
\put(25.2000,-20.9500){\makebox(0,0){{\color[named]{Black}{$\cfrac{1}{2}$}}}}%
% STR 2 0 3 0 Black White  
% 4 1200 1705 1200 1805 5 0 0 0
% $\cfrac{1}{3}$
\put(12.0000,-18.0500){\makebox(0,0){{\color[named]{Black}{$\cfrac{1}{3}$}}}}%
% STR 2 0 3 0 Black White  
% 4 2250 1305 2250 1405 5 0 0 0
% $\cfrac{2}{5}$
\put(22.5000,-14.0500){\makebox(0,0){{\color[named]{Black}{$\cfrac{2}{5}$}}}}%
% STR 2 0 3 0 Black White  
% 4 2070 875 2070 975 5 0 0 0
% $\cfrac{3}{8}$
\put(20.7000,-9.7500){\makebox(0,0){{\color[named]{Black}{$\cfrac{3}{8}$}}}}%
% STR 2 0 3 0 Black White  
% 4 1550 755 1550 855 5 0 0 0
% $\cfrac{4}{11}$
\put(15.5000,-8.5500){\makebox(0,0){{\color[named]{Black}{$\cfrac{4}{11}$}}}}%
% STR 2 0 3 0 Black White  
% 4 1800 305 1800 405 5 0 0 0
% $\cfrac{7}{19}$
\put(18.0000,-4.0500){\makebox(0,0){{\color[named]{Black}{$\cfrac{7}{19}$}}}}%
% STR 2 0 3 0 Black White  
% 4 2100 2355 2100 2455 5 0 0 0
% $\fan_1$
\put(21.0000,-24.5500){\makebox(0,0){{\color[named]{Black}{$\fan_1$}}}}%
% STR 2 0 3 0 Black White  
% 4 1500 2045 1500 2145 5 0 0 0
% $\fan_2$
\put(15.0000,-21.4500){\makebox(0,0){{\color[named]{Black}{$\fan_2$}}}}%
% STR 2 0 3 0 Black White  
% 4 1830 1605 1830 1705 5 0 0 0
% $\fan_3$
\put(18.3000,-17.0500){\makebox(0,0){{\color[named]{Black}{$\fan_3$}}}}%
% STR 2 0 3 0 Black White  
% 4 1780 955 1780 1055 5 0 0 0
% $\fan_4$
\put(17.8000,-10.5500){\makebox(0,0){{\color[named]{Black}{$\fan_4$}}}}%
% LINE 0 0 3 0 Black White  
% 2 1000 2605 2600 2605
% 
{\color[named]{Black}{%
\special{pn 20}%
\special{pa 1000 2605}%
\special{pa 2600 2605}%
\special{fp}%
}}%
% LINE 0 0 3 0 Black White  
% 2 1000 2605 2400 2105
% 
{\color[named]{Black}{%
\special{pn 20}%
\special{pa 1000 2605}%
\special{pa 2400 2105}%
\special{fp}%
}}%
% LINE 0 0 3 0 Black White  
% 2 2400 2105 1330 1795
% 
{\color[named]{Black}{%
\special{pn 20}%
\special{pa 2400 2105}%
\special{pa 1330 1795}%
\special{fp}%
}}%
% LINE 0 0 3 0 Black White  
% 2 1330 1795 1960 995
% 
{\color[named]{Black}{%
\special{pn 20}%
\special{pa 1330 1795}%
\special{pa 1960 995}%
\special{fp}%
}}%
% LINE 0 0 3 0 Black White  
% 8 5000 2600 4400 1400 4400 1400 3800 2600 3800 2600 5000 2600 5000 2600 4000 2200
% 
{\color[named]{Black}{%
\special{pn 20}%
\special{pa 5000 2600}%
\special{pa 4400 1400}%
\special{fp}%
\special{pa 4400 1400}%
\special{pa 3800 2600}%
\special{fp}%
\special{pa 3800 2600}%
\special{pa 5000 2600}%
\special{fp}%
\special{pa 5000 2600}%
\special{pa 4000 2200}%
\special{fp}%
}}%
% LINE 2 0 3 0 Black White  
% 2 4000 2200 4600 1800
% 
{\color[named]{Black}{%
\special{pn 8}%
\special{pa 4000 2200}%
\special{pa 4600 1800}%
\special{fp}%
}}%
% STR 2 0 3 0 Black White  
% 4 4400 1100 4400 1200 5 0 0 0
% $\cfrac{3}{5}$
\put(44.0000,-12.0000){\makebox(0,0){{\color[named]{Black}{$\cfrac{3}{5}$}}}}%
% STR 2 0 3 0 Black White  
% 4 3900 2030 3900 2130 5 0 0 0
% $\cfrac{1}{2}$
\put(39.0000,-21.3000){\makebox(0,0){{\color[named]{Black}{$\cfrac{1}{2}$}}}}%
% STR 2 0 3 0 Black White  
% 4 4700 1650 4700 1750 5 0 0 0
% $\cfrac{2}{3}$
\put(47.0000,-17.5000){\makebox(0,0){{\color[named]{Black}{$\cfrac{2}{3}$}}}}%
% STR 2 0 3 0 Black White  
% 4 3700 2470 3700 2570 5 0 0 0
% $\cfrac{0}{1}$
\put(37.0000,-25.7000){\makebox(0,0){{\color[named]{Black}{$\cfrac{0}{1}$}}}}%
% STR 2 0 3 0 Black White  
% 4 5100 2470 5100 2570 5 0 0 0
% $\cfrac{1}{1}$
\put(51.0000,-25.7000){\makebox(0,0){{\color[named]{Black}{$\cfrac{1}{1}$}}}}%
% STR 2 0 3 0 Black White  
% 4 4400 1960 4400 2060 5 0 0 0
% $\fan_2$
\put(44.0000,-20.6000){\makebox(0,0){{\color[named]{Black}{$\fan_3$}}}}%
% STR 2 0 3 0 Black White  
% 4 4100 2330 4100 2430 5 0 0 0
% $\fan_1$
\put(41.0000,-24.3000){\makebox(0,0){{\color[named]{Black}{$\fan_2$}}}}%
\end{picture}}%
	\caption{Example of ancestral triangles; $\AT{7/19}$ on the left, $\AT{3/5}$ on the right. $\AT{7/19}$ consists of one right triangle $\tri_1$, one left triangle $\tri_2$, two right triangles $\tri_3$ and $\tri_4$, and two left triangles $\tri_5$ and $\tri_6$. $\AT{3/5}$ consists of zero right triangle, one left triangle $\tri_1$, and two right triangles $\tri_2$ and $\tri_3$.}
	\label{fig:example_of_ancestral_triangle_construction}
\end{center}
\end{figure}
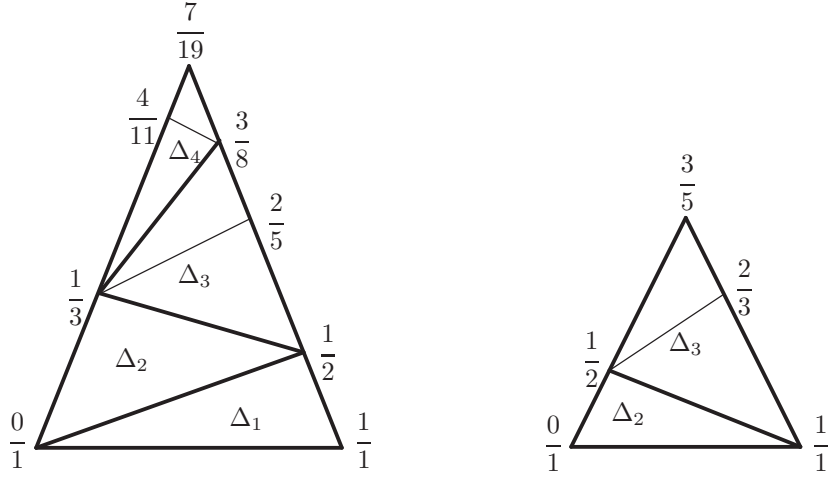
\end{ex}

\begin{prop}
\label{prop:ancestral_triangle}
	 Let $p/q$ is an irreducible fraction between $0$ and $1$. Then
	\begin{enumerate}
		\item The top vertex of $\AT{p/q}$ is labeled $p/q$.
		\item $\AT{(q - p) / q}$ is the mirror image of $\AT{p/q}$.
		\item The vertex of $\AT{(q-p)/p}$ corresponding to the vertex of $\AT{p/q}$ labeled $r/s$ is labeled $(s-r)/s$.
	\end{enumerate}
\end{prop}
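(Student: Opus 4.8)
All three parts flow from one observation: as $\AT{p/q}$ is assembled triangle by triangle, the pair of labels on the ``current top edge'' is governed by the continued-fraction convergents of $p/q$. Fix an expansion $p/q=[a_1,\dots,a_n]$ and let $p_k/q_k$ ($0\le k\le n$) be its convergents, so $p_0/q_0=0/1$, $p_1/q_1=1/a_1$, $p_n/q_n=p/q$, and $p_k=a_kp_{k-1}+p_{k-2}$, $q_k=a_kq_{k-1}+q_{k-2}$ for $k\ge 2$; write $u\oplus v=\tfrac{r+r'}{s+s'}$ for the mediant of $u=r/s$ and $v=r'/s'$. Part (1) rests on the following invariant, to be proved by induction on $k$: \emph{after $\fan_1,\dots,\fan_k$ are stacked, the top edge carries $\{p_{k-1}/q_{k-1},\,p_k/q_k\}$, and $\fan_{k+1}$ is a fan of triangles all incident to the vertex labelled $p_k/q_k$.}

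For the base case $k=1$: by the shape of a ``right triangle'' (Figure~\ref{fig:right_and_left_triangle}) the $a_1-1$ right triangles of $\fan_1$ fan about the bottom-left vertex $0/1$, so the new vertices are $0/1\oplus1/1=1/2$, $0/1\oplus1/2=1/3$, and so on up to $1/a_1$, giving top edge $\{0/1,1/a_1\}=\{p_0/q_0,p_1/q_1\}$; this reads off correctly also when $a_1=1$, where $\fan_1$ is empty and the top edge is the bottom edge $\{0/1,1/1\}$. For the step, I would use that the blocks alternate right/left, so $\fan_{k+1}$ is a fan about the endpoint of $\fan_k$'s top edge other than the centre $p_{k-1}/q_{k-1}$ of $\fan_k$, namely $p_k/q_k$; the mediant rule then carries the free endpoint from $p_{k-1}/q_{k-1}$ through $\tfrac{jp_k+p_{k-1}}{jq_k+q_{k-1}}$ after $j$ triangles, and after all $a_{k+1}$ of them to $\tfrac{a_{k+1}p_k+p_{k-1}}{a_{k+1}q_k+q_{k-1}}=p_{k+1}/q_{k+1}$, while $p_k/q_k$ persists. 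Running this once more through $\fan_n$, the vertex created by its topmost triangle $\tri_{l_n-1}$, which is the apex of $\AT{p/q}$, comes out to be $p_n/q_n=p/q$; this is (1). The computation is identical for either expansion of $p/q$, so the apex does not depend on the choice.

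I would prove (2) and (3) together. Reflect $\AT{p/q}$ in a vertical line and relabel each vertex $r/s$ by $\sigma(r/s):=(s-r)/s$. Reflection turns the stacking pattern of $\AT{p/q}$ into that of the continued fraction $[1,a_1-1,a_2,\dots,a_n]$ (or $[a_2+1,a_3,\dots,a_n]$ when $a_1=1$), and the identities $1-\tfrac1{a_1+x}=\tfrac1{1+1/((a_1-1)+x)}$ and $1-\tfrac1{1+x}=\tfrac1{1+1/x}$ identify this as an expansion of $(q-p)/q$; hence the reflected figure has the shape of $\AT{(q-p)/q}$, which is (2). Moreover $\sigma$ is an involution that exchanges the bottom labels $0/1\leftrightarrow1/1$ and commutes with the mediant, $\sigma(u\oplus v)=\sigma(u)\oplus\sigma(v)$. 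Since the mediant rule reconstructs every label from the shape together with the two bottom labels, the reflected, $\sigma$-relabelled copy of $\AT{p/q}$ has the correct shape and the correct bottom edge $\{0/1,1/1\}$, hence it equals $\AT{(q-p)/q}$ (its apex being $\sigma(p/q)=(q-p)/q$, consistent with (1)); and by construction the vertex of it corresponding to the vertex of $\AT{p/q}$ labelled $r/s$ carries $\sigma(r/s)=(s-r)/s$, which is (3).

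\textbf{Main obstacle.}
The one genuinely delicate point is the geometric bookkeeping inside (1): one has to extract from the precise definition of ``right'' versus ``left'' triangle in Figure~\ref{fig:right_and_left_triangle} that $\fan_1$ fans about $0/1$ rather than $1/1$, and that consecutive blocks fan about the two \emph{different} endpoints of the edge they share. This is exactly what decides which of the two top-edge labels survives each block, and hence what makes the convergent recursion appear --- the opposite convention would instead place $(q-p)/q$ at the apex. Once it is pinned down the induction is routine, and in (2)--(3) the only minor nuisances are the case split on $a_1$ and the non-uniqueness of the continued-fraction expansion.
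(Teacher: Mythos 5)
Your proposal is correct and follows essentially the same route as the paper: part (1) via the convergent recursion $p_k=a_kp_{k-1}+p_{k-2}$, $q_k=a_kq_{k-1}+q_{k-2}$ (you merely make explicit the top-edge/fan invariant the paper leaves implicit), part (2) via the identity $(q-p)/q=[1,a_1-1,a_2,\dots,a_n]$ and the swap of right and left blocks under reflection, and part (3) via the compatibility of $r/s\mapsto(s-r)/s$ with the mediant, which is exactly the paper's closing observation. No gaps; your version is just a more detailed write-up of the same argument.
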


\begin{proof}
	$(1)$ follows from a well known recursion: let $p_i / q_i = \cf{i}{}$,
	\[
	\cfrac{p_i}{q_i} = \cfrac{a_ip_{i-1} + p_{i-2}}{a_iq_{i-1} + q_{i - 2}}.
	\]
	We prove $(2)$.The continued fraction expansion of $p/q$ and $(q-p)/q$ are $\cf{n}{}$ with $a_1 > 1$ and $[1,a_1-1,a_2,\dots,a_n]$. The ancestral triangle of $\cf{n}{}$ consists of $a_1 - 1$ right triangles, $a_2$ left triangles, $\dots$, and that of $[1,a_1-1,a_2,\dots,a_n]$ consists of zero right triangle, $a_1 - 1$ left triangles, $a_2$ right triangles, $\dots$. These are mirror images of each other. $(3)$ follows $(2)$ and the fact that if two vertices is labeled $(q-p)/q$ and $(s-r)/s$, then the other vertex is labeled $((q + s) - (p + r))/ (q+s)$.
\end{proof}

\subsection{paths in ancestral triangle}
In the ancestral triangle of $p/q$, we call a sequence of edges $\path = (e_1, e_2,\dots, e_m)$ a \emph{path} if following conditions hold:
\begin{enumerate}
\item The starting point of $e_i$ is the endpoint of $e_{i-1}$.
\item The starting point of $e_1$ is $p/q$ and the endpoint of $e_m$ is $0/1$ or $1/1$.
\item The denominator of the starting point of $e_i$ is greater than the denominator of the endpoint of $e_i$.
\end{enumerate}

Paths divide the ancestral triangle into two parts. For a path $\path$, let $S_\path$ be the set of triangles on the left side of $\path$.

\begin{defi}
\label{def:weight_of_path}
Let $x_i$'s and $y_i$'s be variables. We define the \emph{weight} of a path $\path$, denoted by $\w(\path)$, to be
\begin{equation*}
\w(\path) = \prod_{\tri_i \in S_\path}\w(\tri_i),
\end{equation*}
where
\begin{equation*}
\w(T_1) = \begin{cases}
y_1x_{2} & \text{if $\tri_1$ is a right triangle;}\\
y_1/x_{2} & \text{if $\tri_1$ is a left triangle,}
\end{cases}
\end{equation*}
and for $2 \le i \le l_n - 1$,
\begin{equation*}
\w(\tri_i) = \begin{cases}
y_ix_{i+1}/x_{i-1} & \text{if $\tri_i$ is a right triangle and $\tri_{i-1}$ is a right triangle;}\\
y_ix_{i-1}x_{i+1} & \text{if $\tri_i$ is a right triangle and $\tri_{i-1}$ is a left triangle;}\\
y_i/x_{i-1}x_{i+1} & \text{if $\tri_i$ is a left triangle and $\tri_{i-1}$ is a right triangle;}\\
y_ix_{i-1}/x_{i+1} & \text{if $\tri_i$ is a left triangle and $\tri_{i-1}$ is a left triangle;}
\end{cases}
\end{equation*}
with the convention that $x_{l_n} = 1$.
\end{defi}

\begin{ex}
An example of path and not path is shown in Figure  \ref{fig:example_of_path}. Let $\path$ be a path shown on the left in that figure. The weight of triangles are as follows:
\[
\begin{array}{rclrclrcl}
\w(\tri_1) & = & y_1x_2,&
\w(\tri_2) & = & y_2/x_1x_3,&
\w(\tri_3) & = & y_3x_2x_4,\\
\w(\tri_4) & = & y_4x_5/x_3,&
\w(\tri_5) & = & y_5/x_4x_6,&
\w(\tri_6) & = & y_6x_5.
\end{array}
\]
Since $S_\path = \{\tri_1, \tri_2,\tri_5\}$, the weight of $\path$ is
\[
\w(\path) = \w(\tri_1)\w(\tri_2)\w(\tri_5) = \cfrac{y_1y_2y_5x_2}{x_1x_3x_4x_6}.
\]
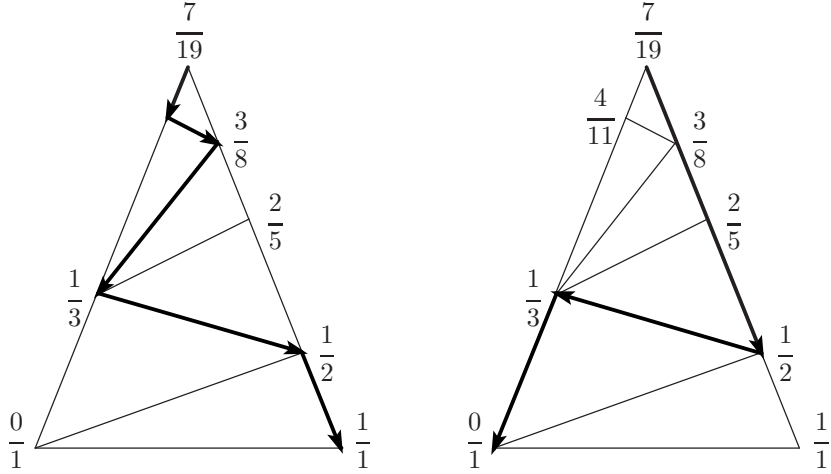
\begin{figure}
	\centering
	%WinTpicVersion4.32a
{\unitlength 0.1in%
\begin{picture}(41.6000,22.7000)(8.4000,-24.0000)%
% LINE 2 0 3 0 Black White  
% 2 1960 805 1690 665
% 
{\color[named]{Black}{%
\special{pn 8}%
\special{pa 1960 805}%
\special{pa 1690 665}%
\special{fp}%
}}%
% LINE 2 0 3 0 Black White  
% 2 1320 1595 2120 1195
% 
{\color[named]{Black}{%
\special{pn 8}%
\special{pa 1320 1595}%
\special{pa 2120 1195}%
\special{fp}%
}}%
% LINE 2 0 3 0 Black White  
% 2 1800 395 1000 2395
% 
{\color[named]{Black}{%
\special{pn 8}%
\special{pa 1800 395}%
\special{pa 1000 2395}%
\special{fp}%
}}%
% STR 2 0 3 0 Black White  
% 4 2750 2085 2750 2185 4 0 0 0
% $\cfrac{1}{1}$
\put(27.5000,-21.8500){\makebox(0,0)[rt]{{\color[named]{Black}{$\cfrac{1}{1}$}}}}%
% STR 2 0 3 0 Black White  
% 4 850 2085 850 2185 1 0 0 0
% $\cfrac{0}{1}$
\put(8.5000,-21.8500){\makebox(0,0)[lt]{{\color[named]{Black}{$\cfrac{0}{1}$}}}}%
% STR 2 0 3 0 Black White  
% 4 2520 1785 2520 1885 5 0 0 0
% $\cfrac{1}{2}$
\put(25.2000,-18.8500){\makebox(0,0){{\color[named]{Black}{$\cfrac{1}{2}$}}}}%
% STR 2 0 3 0 Black White  
% 4 1200 1495 1200 1595 5 0 0 0
% $\cfrac{1}{3}$
\put(12.0000,-15.9500){\makebox(0,0){{\color[named]{Black}{$\cfrac{1}{3}$}}}}%
% STR 2 0 3 0 Black White  
% 4 2250 1095 2250 1195 5 0 0 0
% $\cfrac{2}{5}$
\put(22.5000,-11.9500){\makebox(0,0){{\color[named]{Black}{$\cfrac{2}{5}$}}}}%
% STR 2 0 3 0 Black White  
% 4 2070 665 2070 765 5 0 0 0
% $\cfrac{3}{8}$
\put(20.7000,-7.6500){\makebox(0,0){{\color[named]{Black}{$\cfrac{3}{8}$}}}}%
% STR 2 0 3 0 Black White  
% 4 1800 95 1800 195 5 0 0 0
% $\cfrac{7}{19}$
\put(18.0000,-1.9500){\makebox(0,0){{\color[named]{Black}{$\cfrac{7}{19}$}}}}%
% LINE 2 0 3 0 Black White  
% 2 1000 2395 2600 2395
% 
{\color[named]{Black}{%
\special{pn 8}%
\special{pa 1000 2395}%
\special{pa 2600 2395}%
\special{fp}%
}}%
% LINE 2 0 3 0 Black White  
% 2 1000 2395 2400 1895
% 
{\color[named]{Black}{%
\special{pn 8}%
\special{pa 1000 2395}%
\special{pa 2400 1895}%
\special{fp}%
}}%
% LINE 2 0 3 0 Black White  
% 2 2400 1895 1330 1585
% 
{\color[named]{Black}{%
\special{pn 8}%
\special{pa 2400 1895}%
\special{pa 1330 1585}%
\special{fp}%
}}%
% LINE 2 0 3 0 Black White  
% 2 1330 1585 1960 785
% 
{\color[named]{Black}{%
\special{pn 8}%
\special{pa 1330 1585}%
\special{pa 1960 785}%
\special{fp}%
}}%
% LINE 2 0 3 0 Black White  
% 2 4360 805 4090 665
% 
{\color[named]{Black}{%
\special{pn 8}%
\special{pa 4360 805}%
\special{pa 4090 665}%
\special{fp}%
}}%
% LINE 2 0 3 0 Black White  
% 2 3720 1595 4520 1195
% 
{\color[named]{Black}{%
\special{pn 8}%
\special{pa 3720 1595}%
\special{pa 4520 1195}%
\special{fp}%
}}%
% LINE 2 0 3 0 Black White  
% 2 4200 395 3400 2395
% 
{\color[named]{Black}{%
\special{pn 8}%
\special{pa 4200 395}%
\special{pa 3400 2395}%
\special{fp}%
}}%
% LINE 2 0 3 0 Black White  
% 2 5000 2395 4200 395
% 
{\color[named]{Black}{%
\special{pn 8}%
\special{pa 5000 2395}%
\special{pa 4200 395}%
\special{fp}%
}}%
% STR 2 0 3 0 Black White  
% 4 5150 2085 5150 2185 4 0 0 0
% $\cfrac{1}{1}$
\put(51.5000,-21.8500){\makebox(0,0)[rt]{{\color[named]{Black}{$\cfrac{1}{1}$}}}}%
% STR 2 0 3 0 Black White  
% 4 3250 2085 3250 2185 1 0 0 0
% $\cfrac{0}{1}$
\put(32.5000,-21.8500){\makebox(0,0)[lt]{{\color[named]{Black}{$\cfrac{0}{1}$}}}}%
% STR 2 0 3 0 Black White  
% 4 4920 1785 4920 1885 5 0 0 0
% $\cfrac{1}{2}$
\put(49.2000,-18.8500){\makebox(0,0){{\color[named]{Black}{$\cfrac{1}{2}$}}}}%
% STR 2 0 3 0 Black White  
% 4 3600 1495 3600 1595 5 0 0 0
% $\cfrac{1}{3}$
\put(36.0000,-15.9500){\makebox(0,0){{\color[named]{Black}{$\cfrac{1}{3}$}}}}%
% STR 2 0 3 0 Black White  
% 4 4650 1095 4650 1195 5 0 0 0
% $\cfrac{2}{5}$
\put(46.5000,-11.9500){\makebox(0,0){{\color[named]{Black}{$\cfrac{2}{5}$}}}}%
% STR 2 0 3 0 Black White  
% 4 4470 665 4470 765 5 0 0 0
% $\cfrac{3}{8}$
\put(44.7000,-7.6500){\makebox(0,0){{\color[named]{Black}{$\cfrac{3}{8}$}}}}%
% STR 2 0 3 0 Black White  
% 4 3950 545 3950 645 5 0 0 0
% $\cfrac{4}{11}$
\put(39.5000,-6.4500){\makebox(0,0){{\color[named]{Black}{$\cfrac{4}{11}$}}}}%
% STR 2 0 3 0 Black White  
% 4 4200 95 4200 195 5 0 0 0
% $\cfrac{7}{19}$
\put(42.0000,-1.9500){\makebox(0,0){{\color[named]{Black}{$\cfrac{7}{19}$}}}}%
% LINE 2 0 3 0 Black White  
% 2 3400 2395 5000 2395
% 
{\color[named]{Black}{%
\special{pn 8}%
\special{pa 3400 2395}%
\special{pa 5000 2395}%
\special{fp}%
}}%
% LINE 2 0 3 0 Black White  
% 2 3400 2395 4800 1895
% 
{\color[named]{Black}{%
\special{pn 8}%
\special{pa 3400 2395}%
\special{pa 4800 1895}%
\special{fp}%
}}%
% LINE 2 0 3 0 Black White  
% 2 4800 1895 3730 1585
% 
{\color[named]{Black}{%
\special{pn 8}%
\special{pa 4800 1895}%
\special{pa 3730 1585}%
\special{fp}%
}}%
% LINE 2 0 3 0 Black White  
% 2 3730 1585 4360 785
% 
{\color[named]{Black}{%
\special{pn 8}%
\special{pa 3730 1585}%
\special{pa 4360 785}%
\special{fp}%
}}%
% VECTOR 0 0 3 0 Black White  
% 6 4200 400 4802 1899 4802 1899 3730 1584 3730 1584 3400 2400
% 
{\color[named]{Black}{%
\special{pn 20}%
\special{pa 4200 400}%
\special{pa 4802 1899}%
\special{fp}%
\special{sh 1}%
\special{pa 4802 1899}%
\special{pa 4796 1830}%
\special{pa 4782 1850}%
\special{pa 4759 1845}%
\special{pa 4802 1899}%
\special{fp}%
\special{pa 4802 1899}%
\special{pa 3730 1584}%
\special{fp}%
\special{sh 1}%
\special{pa 3730 1584}%
\special{pa 3788 1622}%
\special{pa 3781 1599}%
\special{pa 3800 1584}%
\special{pa 3730 1584}%
\special{fp}%
\special{pa 3730 1584}%
\special{pa 3400 2400}%
\special{fp}%
\special{sh 1}%
\special{pa 3400 2400}%
\special{pa 3444 2346}%
\special{pa 3420 2351}%
\special{pa 3406 2331}%
\special{pa 3400 2400}%
\special{fp}%
}}%
% LINE 2 0 3 0 Black White  
% 2 1800 400 2600 2400
% 
{\color[named]{Black}{%
\special{pn 8}%
\special{pa 1800 400}%
\special{pa 2600 2400}%
\special{fp}%
}}%
% VECTOR 0 0 3 0 Black White  
% 12 1800 400 1693 663 1693 663 1955 799 1955 799 1329 1585 1329 1584 2395 1892 2395 1892 2598 2394 2598 2394 2598 2394
% 
{\color[named]{Black}{%
\special{pn 20}%
\special{pa 1800 400}%
\special{pa 1693 663}%
\special{fp}%
\special{sh 1}%
\special{pa 1693 663}%
\special{pa 1737 609}%
\special{pa 1713 614}%
\special{pa 1700 594}%
\special{pa 1693 663}%
\special{fp}%
\special{pa 1693 663}%
\special{pa 1955 799}%
\special{fp}%
\special{sh 1}%
\special{pa 1955 799}%
\special{pa 1905 751}%
\special{pa 1908 774}%
\special{pa 1887 786}%
\special{pa 1955 799}%
\special{fp}%
\special{pa 1955 799}%
\special{pa 1329 1585}%
\special{fp}%
\special{sh 1}%
\special{pa 1329 1585}%
\special{pa 1386 1545}%
\special{pa 1362 1543}%
\special{pa 1355 1520}%
\special{pa 1329 1585}%
\special{fp}%
\special{pa 1329 1584}%
\special{pa 2395 1892}%
\special{fp}%
\special{sh 1}%
\special{pa 2395 1892}%
\special{pa 2337 1854}%
\special{pa 2344 1877}%
\special{pa 2325 1893}%
\special{pa 2395 1892}%
\special{fp}%
\special{pa 2395 1892}%
\special{pa 2598 2394}%
\special{fp}%
\special{sh 1}%
\special{pa 2598 2394}%
\special{pa 2592 2325}%
\special{pa 2578 2345}%
\special{pa 2554 2340}%
\special{pa 2598 2394}%
\special{fp}%
\special{pa 2598 2394}%
\special{pa 2598 2394}%
\special{fp}%
}}%
\end{picture}}%
	\caption{Bold arrows on the left is a path and on the right is not a path because the arrow from $1/2$ to $1/3$ does not satisfy the condition $(3)$.}
	\label{fig:example_of_path}
\end{figure}
\end{ex}

\section{Cluster variable}
\label{section:cluster_variable}
In this section, first we recall the definition of cluster variables with principal coefficients. Then we define the cluster variables from the ancestral triangles and show the cluster expansion formula using paths.

\subsection{Cluster variables with principal coefficients}
We will consider cluster algebras with principal coefficients which are defined over tropical semifields.

Let $N$ be a positive integer. Let $\Trop(y_1,y_2,\dots,y_N)$ be an abelian group freely generated by the elements $y_1,y_2\dots,y_N$. We define the multiplication $\cdot$ in $\Trop(y_1,y_2,\dots,y_N)$ as the usual multiplication of polynomials and define the addition $\oplus$ in $\Trop(y_1,y_2,\dots,y_N)$ by
\[
\prod_{j=1}^{N}y_j^{a_j} \oplus \prod_{j=1}^{N}y_j^{b_j} = \prod_{j=1}^{N}y_j^{\min(a_j,b_j)}.
\]
We call $(\Trop(y_1,y_2,\dots,y_N), \oplus, \cdot)$ a tropical semifield. 

We use the notation that $[1,N] = \{1, 2, \dots, N\}$ and $[x]_+ = \max(x, 0)$. Let $B = (b_{ij})$ be an $N \times N$ skew-symmetrizable matrix. Let $\mathbf{y} = (u_1,u_2,\dots,u_N)$ be an $N$-tuple of elements of tropical semifield $(\Trop(y_1,y_2,\dots,y_N), \oplus, \cdot)$ and $\mathbf{x} = (X_1,X_2,\dots,X_N)$ an $N$-tuple of rational functions in $2N$ independent variables $x_1,\dots,x_N,y_1,\dots,y_N$. We call a triple $(\mathbf{x},\mathbf{y},B)$ a \emph{seed}. For $k \in [1,N]$, the \emph{mutation} $\mu_k$ in derection $k$ transforms $(\mathbf{x},\mathbf{y},B)$ into $\mu_k(\mathbf{x},\mathbf{y},B) = (\mathbf{x}',\mathbf{y}',B')$ defined as follows:
\begin{itemize}
\item The entries of $B' = (b_{ij}')$ are given by
%\begin{enumerate}
%\item For every $2$-path $i \rightarrow k \rightarrow j$, add an arrow $i \rightarrow j$.
%\item Reverse all arrows incident with $k$.
%\item Remove all $2$-cycles.
%%※ミューテーションの定義
%\end{enumerate}
\[
b_{ij}' = \begin{cases}
-b_{ij} & \text{if $i = k$ or $j = k$}\\
b_{ij} + \operatorname{sgn}(b_{ik})[b_{ik}b_{kj}]_+ & \text{otherwise.}
\end{cases}
\]

\item $\mathbf{y}' = (u'_1,u'_2,\dots,u'_N)$ is given by
\[
u'_j = \begin{cases}
u_k^{-1} & \text{if $j = k$;}\\
u_ju_k^{[b_{kj}]_+}(u_k \oplus 1)^{-b_{kj}} & \text{if $j \ne k$}.
\end{cases}
\]
\item $\mathbf{x}' = (X'_1,X'_2,\dots,X'_N)$ is given by
\[
X'_j = \begin{cases}
\cfrac{\prod X_i^{[-b_{ik}]_+} + u_k\prod X_i^{[b_{ik}]_+}}{(u_k \oplus 1)X_k} & \text{if $j = k$;}\\
X_j & \text{if $j \ne k$.}
\end{cases}
\]
\end{itemize}

Let $Q$ be a finite quiver without $1$-loops and $2$-cycles with vertex set $[1,N]$. The $N \times N$ skew-symmetric matrix $B^Q = (b_{ij}^Q)$ corresponding to $Q$ is defined by $b_{ij}^Q = Q_{ij} - Q_ {ji}$ where $Q_{ij}$ is the number of arrows from $i$ to $j$ in $Q$. Therefore we regard a triple $(\mathbf{x},\mathbf{y},Q)$ as a seed $(\mathbf{x},\mathbf{y},B^Q)$.

We call a seed $((x_1,x_2,\dots,x_N), (y_1,y_2,\dots,y_N),Q)$ an \emph{initial seed} and denote it by $(\mathbf{x}_0,\mathbf{y}_0,Q_0)$. We say $X$ is a \emph{cluster variable} if there exists a seed $(\mathbf{x},\mathbf{y},Q) = \mu_{i_k} \circ \cdots \circ \mu_{i_1}(\mathbf{x}_0,\mathbf{y}_0,Q_0)$ for some mutations $\mu_{i_1},\dots,\mu_{i_k}$ and $X$ is an element of $\mathbf{x}$. It is known that all cluster variables are elements of $\Z[x_1^{\pm1},\dots,x_N^{\pm1},y_1,\dots,y_N]$. The \emph{$F$-polynomial} is a polynomial in $\Z[y_1,\dots,y_N]$ obtained by substituting $1$ for all $x_i$.

\begin{ex}
\label{exmp:cluster_mutation}
Let $Q_0$ be a quiver $\xymatrix{1 \ar[r] & 2 & \ar[l] 3}$ and $(\mathbf{x}_i,\mathbf{y}_i,Q_i) = \mu_i \circ \cdots \circ \mu_1(\mathbf{x}_0,\mathbf{y}_0,Q_0)$. Seeds $(\mathbf{x}_0,\mathbf{y}_0,Q_0)$, $(\mathbf{x}_1,\mathbf{y}_1,Q_1)$, $(\mathbf{x}_2,\mathbf{y}_2,Q_2)$, and $(\mathbf{x}_3,\mathbf{y}_3,Q_3)$ are as follows:
\begin{center}
\begin{tabular}{|c|c|c|c|}
\hline
i & $\mathbf{x}_i$ & $\mathbf{y}_i$ & $Q_i$\\
\hline
$0$ & $(x_1,x_2,x_3)$ & $(y_1,y_2,y_3)$  & $\xymatrix{1 \ar[r] & 2 & \ar[l] 3}$ \\
\hline
\vrule width 0pt height 20pt depth 12pt $1$ & $\left(\cfrac{x_2 + y_1}{x_1},x_2,x_3\right)$ & $\left(\cfrac{1}{y_1},y_1y_2,y_3\right)$ & $\xymatrix{1 & \ar[l] 2 & \ar[l] 3}$\\
\hline
\vrule width 0pt height 20pt depth 12pt $2$ & $\left(\cfrac{x_2 + y_1}{x_1},\cfrac{x_2 + y_1 + y_1y_2x_1x_3}{x_1x_2},x_3\right)$ & $\left(y_2,\cfrac{1}{y_1y_2},y_3\right)$ & $\xymatrix{1 \ar[r] & 2 \ar[r] & 3 \ar@/^3mm/[ll]}$\\
\hline
\vrule width 0pt height 20pt depth 12pt $3$ & $\left(\cfrac{x_2 + y_1}{x_1},\cfrac{x_2 + y_1 + y_1y_2x_1x_3}{x_1x_2},X_3'\right)$ & $\left(y_2y_3,\cfrac{1}{y_1y_2},\cfrac{1}{y_3}\right)$ & $\xymatrix{1 \ar@/^-3mm/[rr] & 2 & \ar[l] 3 }$\\
\hline
\end{tabular}
\end{center}

where
\[
X_3' = \cfrac{x_{2}^2 + y_1x_2 + y_3x_2 + y_1y_3 + y_1y_2y_3x_1x_3}{x_1x_2x_3}.
\]
\end{ex}
\subsection{Cluster variables from ancestral triangles}
Let $p/q$ be an irreducible fraction between $0$ and $1$, and  $\cf{n}{}$ its continued fraction expansion. Let $N = l_n - 1$. To define the cluster variable from the ancestral triangle $\AT{p/q}$, we construct an initial quiver $Q_0$ as follows:
\begin{itemize}
\item The vertices set of $Q_0$ is $[1,N]$.
\item For $i = 1,2,\dots,N-1$, we put an arrow from $i+1$ to $i$ if $\tri_i$ is a right triangle, and put an arrow from $i$ to $i+1$ if $T_i$ is a left triangle.
\end{itemize}

\begin{ex}
The initial quiver of $\AT{7/19}$ has $6$ vertices. According to Figure \ref{fig:example_of_ancestral_triangle_construction}, $\tri_1$, $\tri_3$, and $\tri_4$ are right triangles and $\tri_2$ and $\tri_5$ are left triangles. Thus, the initial quiver is
\[
\xymatrix{1 & \ar[l] 2 \ar[r] & 3 & \ar[l] 4 & \ar[l] 5 \ar[r] & 6}.
\]
\end{ex}

Let $(\mathbf{x},\mathbf{y},Q) = \mu_N\circ\mu_{N-1}\circ\cdots\circ\mu_2\circ\mu_1(\mathbf{x}_0,\mathbf{y}_0,Q_0)$. We call the $N$-th element of $\mathbf{x}$ the cluster variable from $\AT{p/q}$ and denote it by $\cv_{p/q}$. 

\begin{thm}
\label{thm:cluster_expansion_formula}
Let $\Gamma_{p/q}$ be the set of all paths in $\AT{p/q}$. Then
\begin{equation}
\label{eqn:cluster_expansion_formula}
\cv_{p/q} = \cfrac{D}{x_1x_2\cdots x_N}\sum_{\path \in \Gamma_{p/q}}\w(\path),
\end{equation}
where 
\[
D = \prod_{i=1}^N (\text{the denominator of $\w(\tri_i)$})
\]
In particular, let $F_{p/q}$ be the $F$-polynomial obtained from $\cv_{p/q}$,
\[
F_{p/q} = \sum_{\path \in \Gamma_{p/q}} \prod_{\tri_i \in S_{\path}} y_i.
\]
\end{thm}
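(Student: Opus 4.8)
The plan is to prove the cluster expansion formula by induction on $N = l_n - 1$, tracking how both sides of \eqref{eqn:cluster_expansion_formula} change when the ancestral triangle $\AT{p/q}$ is extended by one more triangle at the top. Concretely, if $p/q = \cf{n}{}$ and we pass from the continued fraction obtained by truncation to the full one, the ancestral triangle acquires one extra triangle $T_{N+1}$ (either right or left) sitting above $T_N$, the quiver $Q_0$ acquires one new vertex $N+1$ with a single arrow to or from $N$, and $\cv_{p/q}$ is obtained from $\cv$ for the truncated fraction by one additional mutation $\mu_{N+1}$ (after the sequence $\mu_N \circ \cdots \circ \mu_1$ has already been performed). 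So the heart of the argument is to understand the seed $(\mathbf{x},\mathbf{y},Q) = \mu_N \circ \cdots \circ \mu_1(\mathbf{x}_0,\mathbf{y}_0,Q_0)$ well enough to compute the effect of $\mu_{N+1}$.

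First I would set up the induction hypothesis carefully: it should assert not just the formula for $\cv$ (the $N$-th cluster variable) but also give explicit control of the $(N-1)$-st and $N$-th cluster variables and the relevant $y$-variables and arrows incident to vertices $N-1, N$ in the mutated quiver $Q$, since the mutation $\mu_{N+1}$ only sees the local data at vertex $N+1$ and its neighbor $N$. The combinatorial bookkeeping here mirrors the structure of Example~\ref{exmp:cluster_mutation}: after mutating $1,2,\dots,k$ in order along a "linear" quiver, the new cluster variables are Laurent polynomials whose numerators are sums over sub-configurations, and the quiver re-linearizes in a predictable way. I would phrase the mutation formula for $X'_{N+1}$ from the recipe in Section~\ref{section:cluster_variable},
\[
X'_{N+1} = \cfrac{\prod X_i^{[-b_{i,N+1}]_+} + u_{N+1}\prod X_i^{[b_{i,N+1}]_+}}{(u_{N+1} \oplus 1)X_{N+1}},
\]
and match the two resulting terms against the two ways a path in $\AT{p/q}$ can behave near the new top triangle $T_{N+1}$: either the path "goes around" $T_{N+1}$ on one side (contributing the old paths, with $T_{N+1} \notin S_\path$ or $T_{N+1} \in S_\path$ depending on the triangle's handedness) or it uses the new edges. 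The four cases in Definition~\ref{def:weight_of_path} for $\w(T_i)$ correspond exactly to the four combinations of (handedness of $T_i$) $\times$ (handedness of $T_{i-1}$), which is the same case split that governs how the arrow between $N$ and $N+1$ and the arrow between $N-1$ and $N$ interact under $\mu_{N+1}$ — this is why the weights are defined the way they are. The prefactor $D/(x_1\cdots x_N)$ is handled by noting $D$ clears exactly the denominators appearing in the $\w(T_i)$, so that $D\sum_\path \w(\path)$ is an honest polynomial and tracking the power of each $x_i$ contributed by $X_{N+1}$ in the denominator of the mutation formula gives the claimed normalization.

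The main obstacle I anticipate is the bookkeeping of signs and exponents in the intermediate seeds — specifically, showing by induction that after $\mu_N \circ \cdots \circ \mu_1$ the quiver $Q$ has the specific shape that makes $\mu_{N+1}$ interact only with vertex $N$ (and that the relevant $b_{N,N+1}$, $b_{N-1,N}$ have the right signs depending on the handedness pattern), and that the $y$-variable $u_{N+1}$ and the $F$-polynomial contribution $\prod_{T_i \in S_\path} y_i$ transform compatibly. Getting the $F$-polynomial statement is then a corollary: setting all $x_i = 1$ in \eqref{eqn:cluster_expansion_formula} kills $D$ and the $x$-monomials, leaving $\sum_\path \prod_{T_i \in S_\path} y_i$, provided one checks $D|_{x=1} = 1$ and each $\w(\path)|_{x=1} = \prod_{T_i \in S_\path} y_i$, both immediate from Definition~\ref{def:weight_of_path}. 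A secondary subtlety is the base case: when $a_1 = 1$ the triangle $T_1$ is a left triangle and $\AT{p/q}$ "starts with zero right triangles," so one must make sure the indexing conventions ($x_{l_n} = 1$, the $\w(T_1)$ special case) are consistent with the smallest genuine example, and possibly treat $N = 1$ directly against Example~\ref{exmp:cluster_mutation}-type computations.
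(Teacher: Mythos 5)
Your overall strategy --- induction on the number of triangles, matching the exchange relation produced by one more mutation against the two ways a path can leave the new top vertex, and obtaining the $F$-polynomial statement by setting all $x_i=1$ --- is the same as the paper's. But there is a genuine gap in the central structural claim. You assert that ``the mutation $\mu_{N+1}$ only sees the local data at vertex $N+1$ and its neighbor $N$,'' and accordingly propose an induction hypothesis that controls only the $(N-1)$-st and $N$-th cluster variables. This is false: the earlier mutations $\mu_1,\dots,\mu_N$ create new arrows incident to vertex $N+1$. Concretely, in $\mu_{N}\circ\cdots\circ\mu_1(Q_0)$ the vertex $N+1$ carries an arrow to or from a \emph{distant} vertex of the form $l_m-1$ (the index just below the fan $\fan$ containing $T_{N+1}$), so the exchange relation for $X_{N+1}'$ involves the previously computed cluster variable $X_{l_m-1}'$, which for $a_m\ge 3$ is not $X_{N-1}'$ or $X_N'$. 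For example, for $[2,3]$ the mutation at vertex $4$ references $X_1'$. The induction hypothesis therefore has to retain the expansion formula for \emph{all} earlier cluster variables $X_k'$ (as the paper does), and one must actually verify the three-arrow local structure at each vertex --- this is the content of the case table in the paper's proof, split according to whether $i$ lies strictly inside a fan or at a boundary index $l_{2k}$ or $l_{2k+1}$.

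The same omission shows up on the combinatorial side. The top vertex $p_{N+1}/q_{N+1}$ is joined (by denominator-decreasing edges) to $p_N/q_N$ and to the distant vertex $p_{l_m-1}/q_{l_m-1}$, so a path either prepends one edge to a path in $\Gamma_{p/q}^{(N)}$ or jumps directly down to $p_{l_m-1}/q_{l_m-1}$ and continues as a path in $\Gamma_{p/q}^{(l_m-1)}$. In the latter case it is not just $T_{N+1}$ that toggles into or out of $S_\path$: the \emph{entire block} of triangles $T_{l_m},\dots,T_{N+1}$ lands on one side of the path, contributing a product $\prod_k \w(T_k)$ of weights, which matches the product $\prod_k y_k$ of coefficients appearing in the exchange relation at a vertex in the range $l_{2k+1}+1\le i\le l_{2k+2}$. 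Your description of ``the two ways a path can behave near the new top triangle'' captures only the cases where the third vertex of $T_{N+1}$ is adjacent in index; the block case is where the real bookkeeping lives, and without it the induction does not close. The $F$-polynomial corollary and the remark about $D$ clearing denominators are fine once the main formula is established.
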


\begin{proof}
Before we prove the cluster expansion formula (\ref{eqn:cluster_expansion_formula}), we only consider about quiver. To simplify the proof, let $\tilde{Q}$ be a quiver obtained from $Q$ by adding three vertices $-1$, $0$, and $N + 1$, an arrow from $1$ to $-1$, an arrow from $0$ to $1$, and an arrow from $N$ to $N+1$. We set $x_{-1} = x_0 = x_{N+1} = 1$. Since three additional variables are equal to $1$, these changes do not affect cluster variables. We focus on the vertex $i$ of $\mu_{i-1}\circ\cdots\circ\mu_1(\tilde{Q})$ for $1 \le i \le N$. It is easy to check that three arrows are incident with $i$ as follows:
\begin{center}
\begin{tabular}{|c|c|c|}
\hline
$i$ & from & to\\
\hline
$l_{2k} + 1 \le i \le l_{2k + 1} - 1$ & $i+1$, $i-1$ & $l_{2k} - 1$\\
\hline
$l_{2k+1}$ & $i-1$ & $i+1$, $l_{2k} - 1$\\
\hline
$l_{2k+1} + 1 \le i \le l_{2k+2} - 1$ & $l_{2k+1} - 1$ & $i+1$, $i-1$\\
\hline
$l_{2k}$ & $l_{2k-1} - 1$, $i+1$ & $i - 1$\\
\hline
\end{tabular}
\end{center}
Then, we prove (\ref{eqn:cluster_expansion_formula}) by induction on $i$. If $i = 1$, then the cluster variable is
\[
X_{1}' = \begin{cases}
\cfrac{1 + y_1x_2}{x_1} & \text{if }\xymatrix{1 & \ar[l] 2};\\
\cfrac{y_1 + x_2}{x_1} & \text{if }\xymatrix{1 \ar[r] & 2}.
\end{cases}
\]
On the other hand, the RHS of (\ref{eqn:cluster_expansion_formula}) is
\[
\begin{cases}
\cfrac{1}{x_1}(1 + \w(\tri_1)) = \cfrac{1}{x_1}(1 + y_1x_2) & \text{if $\tri_1$ is a right triangle;}\\
\cfrac{x_2}{x_1}\left(1 + \w(\tri_1)\right) = \cfrac{x_2}{x_1}\left(1 + \cfrac{y_1}{x_2}\right) & \text{if $\tri_1$ is a left triangle;}
\end{cases}
\]
and Theorem \ref{thm:cluster_expansion_formula} holds. Suppose that $i > 1$. Let $p_i/q_i$ be a fraction which has the greatest denominator of labels of vertices of $\tri_i$. Let $\Gamma_{p/q}^{(i)}$ be the set of paths from $p_i/q_i$ and $D_i = \prod_{k = 1}^{i}(\text{the denominator of $\w(\tri_k)$})$. By induction, we may assume
\[
X_{k}' = \cfrac{D_k}{x_1\cdots x_k}\left(\sum_{\path \in \Gamma_{p/q}^{(k)}}\w(\path)\right)
\]
for $k < i$.

\begin{figure}
	\centering
	\scalebox{0.8}{%WinTpicVersion4.32a
{\unitlength 0.1in%
\begin{picture}(51.1000,45.3000)(0.9000,-47.9500)%
% LINE 2 0 3 0 Black White  
% 4 1000 400 600 2400 1800 400 2200 2400
% 
{\color[named]{Black}{%
\special{pn 8}%
\special{pa 1000 400}%
\special{pa 600 2400}%
\special{fp}%
\special{pa 1800 400}%
\special{pa 2200 2400}%
\special{fp}%
}}%
% STR 2 0 3 0 Black White  
% 4 1900 230 1900 330 5 0 0 0
% $\cfrac{p_i}{q_i}$
\put(19.0000,-3.3000){\makebox(0,0){{\color[named]{Black}{$\cfrac{p_i}{q_i}$}}}}%
% STR 2 0 3 0 Black White  
% 4 2040 670 2040 770 5 0 0 0
% $\cfrac{p_{i-1}}{q_{i-1}}$
\put(20.4000,-7.7000){\makebox(0,0){{\color[named]{Black}{$\cfrac{p_{i-1}}{q_{i-1}}$}}}}%
% STR 2 0 3 0 Black White  
% 4 1570 1090 1570 1190 5 0 0 0
% $\vdots$
\put(15.7000,-11.9000){\makebox(0,0){{\color[named]{Black}{$\vdots$}}}}%
% STR 2 0 3 0 Black White  
% 4 1340 520 1340 620 5 0 0 0
% $\vdots$
\put(13.4000,-6.2000){\makebox(0,0){{\color[named]{Black}{$\vdots$}}}}%
% STR 2 0 3 0 Black White  
% 4 350 1500 350 1600 2 0 0 0
% $\cfrac{p_{l_{2k} - 1}}{q_{l_{2k} - 1}}$
\put(3.5000,-16.0000){\makebox(0,0)[lb]{{\color[named]{Black}{$\cfrac{p_{l_{2k} - 1}}{q_{l_{2k} - 1}}$}}}}%
% STR 2 0 3 0 Black White  
% 4 1650 670 1650 770 5 0 0 0
% $\tri_i$
\put(16.5000,-7.7000){\makebox(0,0){{\color[named]{Black}{$\tri_i$}}}}%
% STR 2 0 3 0 Black White  
% 4 1070 1940 1070 2040 5 0 0 0
% $\tri_{l_{2k} - 1}$
\put(10.7000,-20.4000){\makebox(0,0){{\color[named]{Black}{$\tri_{l_{2k} - 1}$}}}}%
% LINE 2 0 3 0 Black White  
% 4 4000 400 3600 2400 4800 400 5200 2400
% 
{\color[named]{Black}{%
\special{pn 8}%
\special{pa 4000 400}%
\special{pa 3600 2400}%
\special{fp}%
\special{pa 4800 400}%
\special{pa 5200 2400}%
\special{fp}%
}}%
% LINE 2 0 3 0 Black White  
% 10 1800 400 760 1600 760 1600 1880 800 760 1600 1960 1200 760 1600 2160 2200 2160 2200 600 2400
% 
{\color[named]{Black}{%
\special{pn 8}%
\special{pa 1800 400}%
\special{pa 760 1600}%
\special{fp}%
\special{pa 760 1600}%
\special{pa 1880 800}%
\special{fp}%
\special{pa 760 1600}%
\special{pa 1960 1200}%
\special{fp}%
\special{pa 760 1600}%
\special{pa 2160 2200}%
\special{fp}%
\special{pa 2160 2200}%
\special{pa 600 2400}%
\special{fp}%
}}%
% STR 2 0 3 0 Black White  
% 4 1590 1570 1590 1670 5 0 0 0
% $\tri_{l_{2k}}$
\put(15.9000,-16.7000){\makebox(0,0){{\color[named]{Black}{$\tri_{l_{2k}}$}}}}%
% LINE 2 0 3 0 Black White  
% 10 3680 2000 5200 2400 3680 2000 5040 1600 4960 1200 3680 2000 3680 2000 4880 800 4880 800 4000 400
% 
{\color[named]{Black}{%
\special{pn 8}%
\special{pa 3680 2000}%
\special{pa 5200 2400}%
\special{fp}%
\special{pa 3680 2000}%
\special{pa 5040 1600}%
\special{fp}%
\special{pa 4960 1200}%
\special{pa 3680 2000}%
\special{fp}%
\special{pa 3680 2000}%
\special{pa 4880 800}%
\special{fp}%
\special{pa 4880 800}%
\special{pa 4000 400}%
\special{fp}%
}}%
% STR 2 0 3 0 Black White  
% 4 3870 230 3870 330 5 0 0 0
% $\cfrac{p_i}{q_i}$
\put(38.7000,-3.3000){\makebox(0,0){{\color[named]{Black}{$\cfrac{p_i}{q_i}$}}}}%
% STR 2 0 3 0 Black White  
% 4 4490 350 4490 450 5 0 0 0
% $\vdots$
\put(44.9000,-4.5000){\makebox(0,0){{\color[named]{Black}{$\vdots$}}}}%
% STR 2 0 3 0 Black White  
% 4 4670 1450 4670 1550 5 0 0 0
% $\vdots$
\put(46.7000,-15.5000){\makebox(0,0){{\color[named]{Black}{$\vdots$}}}}%
% STR 2 0 3 0 Black White  
% 4 4190 870 4190 970 5 0 0 0
% $\tri_i$
\put(41.9000,-9.7000){\makebox(0,0){{\color[named]{Black}{$\tri_i$}}}}%
% STR 2 0 3 0 Black White  
% 4 3280 1900 3280 2000 2 0 0 0
% $\cfrac{p_{l_{2k} - 1}}{q_{l_{2k} - 1}}$
\put(32.8000,-20.0000){\makebox(0,0)[lb]{{\color[named]{Black}{$\cfrac{p_{l_{2k} - 1}}{q_{l_{2k} - 1}}$}}}}%
% STR 2 0 3 0 Black White  
% 4 5040 670 5040 770 5 0 0 0
% $\cfrac{p_{i-1}}{q_{i-1}}$
\put(50.4000,-7.7000){\makebox(0,0){{\color[named]{Black}{$\cfrac{p_{i-1}}{q_{i-1}}$}}}}%
% STR 2 0 3 0 Black White  
% 4 3930 2170 3930 2270 5 0 0 0
% $\tri_{l_{2k} - 1}$
\put(39.3000,-22.7000){\makebox(0,0){{\color[named]{Black}{$\tri_{l_{2k} - 1}$}}}}%
% STR 2 0 3 0 Black White  
% 4 4710 1100 4710 1200 5 0 0 0
% $\tri_{i-1}$
\put(47.1000,-12.0000){\makebox(0,0){{\color[named]{Black}{$\tri_{i-1}$}}}}%
% STR 2 0 3 0 Black White  
% 4 4800 1900 4800 2000 5 0 0 0
% $\tri_{l_{2k}}$
\put(48.0000,-20.0000){\makebox(0,0){{\color[named]{Black}{$\tri_{l_{2k}}$}}}}%
% LINE 2 0 3 0 Black White  
% 4 1780 2795 2180 4795 980 2795 580 4795
% 
{\color[named]{Black}{%
\special{pn 8}%
\special{pa 1780 2795}%
\special{pa 2180 4795}%
\special{fp}%
\special{pa 980 2795}%
\special{pa 580 4795}%
\special{fp}%
}}%
% STR 2 0 3 0 Black White  
% 4 880 2625 880 2725 5 0 0 0
% $\cfrac{p_i}{q_i}$
\put(8.8000,-27.2500){\makebox(0,0){{\color[named]{Black}{$\cfrac{p_i}{q_i}$}}}}%
% STR 2 0 3 0 Black White  
% 4 740 3065 740 3165 5 0 0 0
% $\cfrac{p_{i-1}}{q_{i-1}}$
\put(7.4000,-31.6500){\makebox(0,0){{\color[named]{Black}{$\cfrac{p_{i-1}}{q_{i-1}}$}}}}%
% STR 2 0 3 0 Black White  
% 4 1210 3485 1210 3585 5 0 0 0
% $\vdots$
\put(12.1000,-35.8500){\makebox(0,0){{\color[named]{Black}{$\vdots$}}}}%
% STR 2 0 3 0 Black White  
% 4 1440 2915 1440 3015 5 0 0 0
% $\vdots$
\put(14.4000,-30.1500){\makebox(0,0){{\color[named]{Black}{$\vdots$}}}}%
% STR 2 0 3 0 Black White  
% 4 2500 3890 2500 3990 3 0 0 0
% $\cfrac{p_{l_{2k+1} - 1}}{q_{l_{2k+1} - 1}}$
\put(25.0000,-39.9000){\makebox(0,0)[rb]{{\color[named]{Black}{$\cfrac{p_{l_{2k+1} - 1}}{q_{l_{2k+1} - 1}}$}}}}%
% STR 2 0 3 0 Black White  
% 4 1130 3065 1130 3165 5 0 0 0
% $\tri_i$
\put(11.3000,-31.6500){\makebox(0,0){{\color[named]{Black}{$\tri_i$}}}}%
% STR 2 0 3 0 Black White  
% 4 1710 4335 1710 4435 5 0 0 0
% $\tri_{l_{2k+1} - 1}$
\put(17.1000,-44.3500){\makebox(0,0){{\color[named]{Black}{$\tri_{l_{2k+1} - 1}$}}}}%
% LINE 2 0 3 0 Black White  
% 10 980 2795 2020 3995 2020 3995 900 3195 2020 3995 820 3595 2020 3995 620 4595 620 4595 2180 4795
% 
{\color[named]{Black}{%
\special{pn 8}%
\special{pa 980 2795}%
\special{pa 2020 3995}%
\special{fp}%
\special{pa 2020 3995}%
\special{pa 900 3195}%
\special{fp}%
\special{pa 2020 3995}%
\special{pa 820 3595}%
\special{fp}%
\special{pa 2020 3995}%
\special{pa 620 4595}%
\special{fp}%
\special{pa 620 4595}%
\special{pa 2180 4795}%
\special{fp}%
}}%
% STR 2 0 3 0 Black White  
% 4 1190 3965 1190 4065 5 0 0 0
% $\tri_{l_{2k+1}}$
\put(11.9000,-40.6500){\makebox(0,0){{\color[named]{Black}{$\tri_{l_{2k+1}}$}}}}%
% LINE 2 0 3 0 Black White  
% 4 4780 2795 5180 4795 3980 2795 3580 4795
% 
{\color[named]{Black}{%
\special{pn 8}%
\special{pa 4780 2795}%
\special{pa 5180 4795}%
\special{fp}%
\special{pa 3980 2795}%
\special{pa 3580 4795}%
\special{fp}%
}}%
% LINE 2 0 3 0 Black White  
% 10 5100 4395 3580 4795 5100 4395 3740 3995 3820 3595 5100 4395 5100 4395 3900 3195 3900 3195 4780 2795
% 
{\color[named]{Black}{%
\special{pn 8}%
\special{pa 5100 4395}%
\special{pa 3580 4795}%
\special{fp}%
\special{pa 5100 4395}%
\special{pa 3740 3995}%
\special{fp}%
\special{pa 3820 3595}%
\special{pa 5100 4395}%
\special{fp}%
\special{pa 5100 4395}%
\special{pa 3900 3195}%
\special{fp}%
\special{pa 3900 3195}%
\special{pa 4780 2795}%
\special{fp}%
}}%
% STR 2 0 3 0 Black White  
% 4 4910 2625 4910 2725 5 0 0 0
% $\cfrac{p_i}{q_i}$
\put(49.1000,-27.2500){\makebox(0,0){{\color[named]{Black}{$\cfrac{p_i}{q_i}$}}}}%
% STR 2 0 3 0 Black White  
% 4 4290 2745 4290 2845 5 0 0 0
% $\vdots$
\put(42.9000,-28.4500){\makebox(0,0){{\color[named]{Black}{$\vdots$}}}}%
% STR 2 0 3 0 Black White  
% 4 4110 3845 4110 3945 5 0 0 0
% $\vdots$
\put(41.1000,-39.4500){\makebox(0,0){{\color[named]{Black}{$\vdots$}}}}%
% STR 2 0 3 0 Black White  
% 4 4590 3265 4590 3365 5 0 0 0
% $\tri_i$
\put(45.9000,-33.6500){\makebox(0,0){{\color[named]{Black}{$\tri_i$}}}}%
% STR 2 0 3 0 Black White  
% 4 5600 4290 5600 4390 3 0 0 0
% $\cfrac{p_{l_{2k-1} - 1}}{q_{l_{2k-1} - 1}}$
\put(56.0000,-43.9000){\makebox(0,0)[rb]{{\color[named]{Black}{$\cfrac{p_{l_{2k-1} - 1}}{q_{l_{2k-1} - 1}}$}}}}%
% STR 2 0 3 0 Black White  
% 4 3740 3065 3740 3165 5 0 0 0
% $\cfrac{p_{i-1}}{q_{i-1}}$
\put(37.4000,-31.6500){\makebox(0,0){{\color[named]{Black}{$\cfrac{p_{i-1}}{q_{i-1}}$}}}}%
% STR 2 0 3 0 Black White  
% 4 4850 4565 4850 4665 5 0 0 0
% $\tri_{l_{2k-1} - 1}$
\put(48.5000,-46.6500){\makebox(0,0){{\color[named]{Black}{$\tri_{l_{2k-1} - 1}$}}}}%
% STR 2 0 3 0 Black White  
% 4 4070 3495 4070 3595 5 0 0 0
% $\tri_{i-1}$
\put(40.7000,-35.9500){\makebox(0,0){{\color[named]{Black}{$\tri_{i-1}$}}}}%
% STR 2 0 3 0 Black White  
% 4 3980 4295 3980 4395 5 0 0 0
% $\tri_{l_{2k-1}}$
\put(39.8000,-43.9500){\makebox(0,0){{\color[named]{Black}{$\tri_{l_{2k-1}}$}}}}%
\end{picture}}%}
	\caption{Triangles under $\tri_i$; $l_{2k} + 1 \le i \le l_{2k + 1} - 1$ on the upper left, $i = l_{2k+1}$ on the upper right, $l_{2k+1} + 1 \le i \le l_{2k+2} - 1$ on bottom left, $i = l_{2k}$ on bottom right.}
	\label{fig:cluster_expansion_formula}
\end{figure}
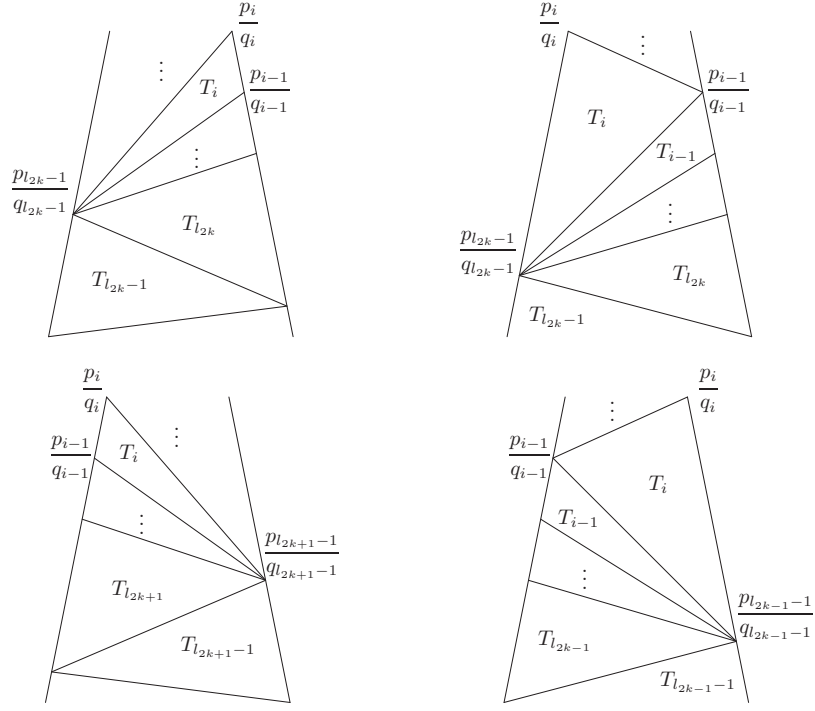

If $l_{2k} + 1 \le i \le l_{2k + 1} - 1$, then the cluster variable is
\[
X_i' = \cfrac{y_ix_{i+1}X_{i-1}' + X_{l_{2k} - 1}}{x_i}.
\]
According to on the upper left of Figure \ref{fig:cluster_expansion_formula}, $\tri_i$ is a right triangle. The left vertex of $\tri_i$ is labeled $p_{l_{2k}-1}/q_{l_{2k}-1}$ and the right vertex is labeled $p_{i-1}/q_{i-1}$. Any $\path \in \Gamma_{p/q}^{(i)}$ is obtained by adding an edge $(p_i/q_i, p_{i-1}/q_{i-1})$ to an element of $\Gamma_{p/q}^{(i-1)}$ or adding an edge $(p_i/q_i, p_{l_{2k}-1}/q_{l_{2k}-1})$ to an element of $\Gamma_{p/q}^{(l_{2k}-1)}$. For a path obtained by the former way, since $T_i$ is on the left side of the path, the weight of the path is the product of the original weight and $\w(\tri_i)$. For a path obtained by the latter way, the weight of path is equal to the original weight. Since $\tri_{l_{2k}-1}$ is a left triangle and $\tri_{l_{2k}}, \tri_{l_{2k} + 1}, \dots, \tri_{i}$ are right triangles, we have $D_i = x_{i-1}D_{i-1} = x_{l_{2k}}x_{l_{2k}+1}\cdots x_{i-1}D_{l_{2k}-1}$. Thus, the RHS of (\ref{eqn:cluster_expansion_formula}) is
\[
\begin{array}{rcl}
\displaystyle
\cfrac{D_i}{x_1x_2\cdots x_i}\sum_{\path \in \Gamma_{p/q}^{(i)}} \w(\path) &=& \cfrac{D_i}{x_1x_2\cdots x_i}\left(\sum_{\path \in \Gamma_{p/q}^{(i-1)}} \w(\tri_i)\w(\path) + \sum_{\path \in \Gamma_{p/q}^{(l_{2k}-1)}}\w(\path)\right)\\
&=& \cfrac{x_{i-1}D_{i-1}y_ix_{i+1}\sum_{\path \in \Gamma_{p/q}^{(i-1)}}\w(\path)}{x_1x_2\cdots x_ix_{i-1}}\\
& & + ~ \cfrac{x_{l_{2k}}x_{l_{2k}+1}\cdots x_{i-1}D_{l_{2k}-1}\sum_{\path \in \Gamma_{p/q}^{(l_{2k}-1)}}\w(\path)}{x_1x_2\cdots x_i}\\
&=& \cfrac{y_ix_{i+1}X_{i-1}'}{x_i} + \cfrac{X_{l_{2k}-1}'}{x_i},
\end{array}
\]
which is equal to $X_{i}'$.

If $i = l_{2k+1}$, then the cluster variable is
\[
X_{i}' = \cfrac{y_iX_{i-1}' + x_{i+1}X_{l_{2k-1}}'}{x_i}.
\]
Since $\tri_i$ and $\tri_{l_{2k}-1}$ are left triangles and $\tri_{l_{2k}},\dots,\tri_{i-1}$ are right triangles, we have
\[
\sum_{\path \in \Gamma_{p/q}^{(i)}} \w(\path) = \sum_{\path \in \Gamma_{p/q}^{(i-1)}} \w(\tri_i)\w(\path) + \sum_{\path \in \Gamma_{p/q}^{(l_{2k}-1)}} \w(\path);\]
\[
D_i = x_{i-1}x_{i+1}D_{i-1} = x_{l_{2k}}x_{l_{2k}+1}\cdots x_{i-2}x_{i-1}x_{i+1}D_{l_{2k}-1}.
\]
Thus, 
\[
\begin{array}{rcl}
\displaystyle
\cfrac{D_i}{x_1x_2\cdots x_i}\sum_{\path \in \Gamma_{p/q}^{(i)}} \w(\path) &=& \cfrac{D_i}{x_1x_2\cdots x_i}\left(\sum_{\path \in \Gamma_{p/q}^{(i-1)}} \w(\tri_i)\w(\path) + \sum_{\path \in \Gamma_{p/q}^{(l_{2k}-1)}} \w(\path)\right)\\
&=& \cfrac{x_{i-1}x_{i+1}D_{i-1}y_i\sum_{\path \in \Gamma_{p/q}^{(i-1)}}\w(\path)}{x_1x_2\cdots x_ix_{i-1}x_{i+1}}\\
& & + ~ \cfrac{x_{l_{2k}}x_{l_{2k}+1}\cdots x_{i-2}x_{i-1}x_{i+1}D_{l_{2k}-1}\sum_{\path \in \Gamma_{p/q}^{(l_{2k}-1)}}\w(\path)}{x_1x_2\cdots x_i}\\
&=& \cfrac{y_iX_{i-1}'}{x_i} + \cfrac{x_{i+1}X_{l_{2k}-1}'}{x_i} ~=~ X_{i}'.
\end{array}
\]

If $l_{2k+1} + 1 \le i \le l_{2k+2} - 1$, then the cluster variable is
\[
X_{i}' = \cfrac{\prod_{k=l_{2k+1}}^{i}y_k \cdot X_{l_{2k+1}-1}' + x_{i+1}X_{i-1}'}{x_i}.
\]
Since $\tri_{l_{2k+1}},\dots,\tri_{i}$ are left triangles and $\tri_{l_{2k+1}-1}$ is a right triangle, we have
\[
\sum_{\path \in \Gamma_{p/q}^{(i)}} \w(\path) = \sum_{\path \in \Gamma_{p/q}^{(l_{2k+1}-1)}} \w(\path)\cdot\prod_{k = l_{2k+1}}^{i}\w(\tri_k) + \sum_{\path \in \Gamma_{p/q}^{(i-1)}} \w(\path);\]
\[
D_i = x_{i+1}D_{i-1} = x_{l_{2k+1}+2}x_{l_{2k+1}+3}\cdots x_{i+1}x_{l_{2k+1}-1}x_{l_{2k+1}+1}D_{l_{2k+1}-1}.
\]
Thus, 
\[
\begin{array}{rl}
&\cfrac{D_i}{x_1x_2\cdots x_i}\sum_{\path \in \Gamma_{p/q}^{(i)}} \w(\path)\\
=& \cfrac{D_i}{x_1x_2\cdots x_i}\left(\sum_{\path \in \Gamma_{p/q}^{(l_{2k+1}-1)}} \w(\path)\cdot\prod_{k = l_{2k+1}}^{i}\w(\tri_k) + \sum_{\path \in \Gamma_{p/q}^{(i-1)}} \w(\path)\right)\\
=& \cfrac{x_{l_{2k+1}+2}\cdots x_{i+1}x_{l_{2k+1}-1}x_{l_{2k+1}+1}D_{l_{2k+1}-1}x_{l_{2k+1}}x_{l_{2k+1}+1}\prod_{k = l_{2k+1}}^{i}y_i\sum_{\path \in \Gamma_{p/q}^{(l_{2k-1}-1)}}\w(\path)}{x_1x_2\cdots x_ix_{i}{x_{i+1}}x_{l_{2k+1}-1}x_{l_{2k+1}+1}}\\
 & + ~ \cfrac{x_{i+1}D_{i-1}\sum_{\path \in \Gamma_{p/q}^{(i-1)}}\w(\path)}{x_1x_2\cdots x_i}\\
=& \cfrac{\prod_{k = l_{2k+1}}^{i}y_k\cdot X_{l_{2k+1}-1}'}{x_i} + \cfrac{x_{i+1}X_{i-1}'}{x_i} ~=~ X_{i}'.
\end{array}
\]

If $i = l_{2k}$, then the cluster variable is
\[
X_{i}' = \cfrac{\prod_{k=l_{2k-1}}^{i}y_k \cdot x_{i+1}X_{l_{2k-1}-1}' + X_{i-1}'}{x_i}.
\]
Since $\tri_{l_{2k-1}},\dots,\tri_{i-1}$ are left triangles and $\tri_{l_{2k-1}-1}$ and $\tri_{i}$ are right triangles, we have
\[
\sum_{\path \in \Gamma_{p/q}^{(i)}} \w(\path) = \sum_{\path \in \Gamma_{p/q}^{(l_{2k-1}-1)}} \w(\path)\cdot\prod_{k = l_{2k-1}}^{i}\w(\tri_k) + \sum_{\path \in \Gamma_{p/q}^{(i-1)}} \w(\path);\]
\[
D_i = D_{i-1} = x_{l_{2k-1}+2}x_{l_{2k-1}+3}\cdots x_{i}x_{l_{2k-1}-1}x_{l_{2k-1}+1}D_{l_{2k-1}-1}.
\]
Thus, 
\[
\begin{array}{rl}
&\cfrac{D_i}{x_1x_2\cdots x_i}\sum_{\path \in \Gamma_{p/q}^{(i)}} \w(\path)\\
=& \cfrac{D_i}{x_1x_2\cdots x_i}\left(\sum_{\path \in \Gamma_{p/q}^{(l_{2k-1}-1)}} \w(\path)\cdot\prod_{k = l_{2k-1}}^{i}\w(\tri_k) + \sum_{\path \in \Gamma_{p/q}^{(i-1)}} \w(\path)\right)\\
=& \cfrac{x_{l_{2k-1}+2}\cdots x_{i}x_{l_{2k-1}-1}x_{l_{2k-1}+1}D_{l_{2k-1}-1}x_{l_{2k-1}}x_{l_{2k-1}+1}x_{i-1}x_{i+1} \prod_{k = l_{2k+1}}^{i}y_i\sum_{\path \in \Gamma_{p/q}^{(l_{2k-1}-1)}}\w(\path)}{x_1x_2\cdots x_ix_{i-1}x_{i}x_{l_{2k-1}-1}x_{l_{2k-1}+1}}\\
& + ~ \cfrac{D_{i-1}\sum_{\path \in \Gamma_{p/q}^{(i-1)}}\w(\path)}{x_1x_2\cdots x_i}\\
=& \cfrac{\prod_{k = l_{2k-1}}^{i}y_k\cdot x_{i+1}X_{l_{2k+1}-1}'}{x_i} + \cfrac{x_{i+1}X_{i-1}'}{x_i} ~=~ X_{i}'.
\end{array}
\]

This complete the induction. We substitute $N$ for $k$ in the assumption:
\[
X_{N}' = \cfrac{D_N}{x_1\cdots x_N}\left(\sum_{\path \in \Gamma_{p/q}^{(N)}}\w(\path)\right).
\]
By definition, $X_{N}' = X_{p/q}$, $D_N = D$, and $\Gamma_{p/q}^{(N)} = \Gamma_{p/q}$. Therefore (\ref{eqn:cluster_expansion_formula}) holds.
\end{proof}

\begin{rem}
	In \cite{LS}, they denote cluster variable corresponding to continued fraction $\cf{n}{}$ by $x\cf{n}{}$ and $F$-polynomial obtained from $x\cf{n}{}$ by $F\cf{n}{}$. This correspondence consists of two parts, correspondence between cluster variables from triangulations and snake graphs \cite{MS} and correspondence between snake graphs and continued fractions \cite{CS}. We can regard $\AT{p/q}$ as the triangulation of $(N+3)$-gon by adding a vertex $v$ and two edges between $v$ and $0/1$ and between $v$ and $1/1$ and the quiver correponding to the triangulation is the same as $Q_0$ defined above. Since the snake graph corresponding to cluster variable of that corresponds to $q/p$, we can regard $\cv_{p/q}$ as $x\cf{n}{}$ and $F_{p/q}$ as $F\cf{n}{}$ where $p/q = \cf{n}{}$. Note that we and they use the same notation to denote continued fractions, but their definition is the inverse of ours.
\end{rem}

\begin{ex}
The ancestral triangle $\AT{3/5}$ is shown on the right of Figure \ref{fig:example_of_ancestral_triangle_construction}. The weights of triangles are 
$\w(\tri_1) = y_1/x_2$, $\w(\tri_2) = y_2x_1x_3$, and $\w(\tri_3) = y_3/x_2$. Paths in $\AT{p/q}$ and their weight are as follows:
\begin{center}
\begin{tabular}{|c|c|c|}
\hline
$\path$ & $S_\path$ & $\w(\path)$\\
\hline
\vrule width 0pt height 20pt depth 12pt $\cfrac{3}{5} \rightarrow \cfrac{1}{2} \rightarrow \cfrac{0}{1}$ & $\emptyset$ & $1$\\
\hline
\vrule width 0pt height 20pt depth 12pt$\cfrac{3}{5} \rightarrow \cfrac{1}{2} \rightarrow \cfrac{1}{1}$ & $\{\tri_1\}$ & $\cfrac{y_1}{x_2}$\\
\hline
\vrule width 0pt height 20pt depth 12pt$\cfrac{3}{5} \rightarrow \cfrac{2}{3} \rightarrow \cfrac{1}{2} \rightarrow \cfrac{0}{1}$ & $\{\tri_3\}$ & $\cfrac{y_3}{x_2}$\\
\hline
\vrule width 0pt height 20pt depth 12pt$\cfrac{3}{5} \rightarrow \cfrac{2}{3} \rightarrow \cfrac{1}{2} \rightarrow \cfrac{1}{1}$ & $\{\tri_1, \tri_3\}$ & $\cfrac{y_1y_3}{x_{2}^2}$\\
\hline
\vrule width 0pt height 20pt depth 12pt$\cfrac{3}{5} \rightarrow \cfrac{2}{3} \rightarrow \cfrac{1}{1}$ & $\{\tri_1, \tri_2, \tri_3\}$ & $\cfrac{y_1y_2y_3x_1x_3}{x_{2}^2}$\\
\hline
\end{tabular}
\end{center}
By Theorem \ref{thm:cluster_expansion_formula}, we have
\[
\begin{array}{rcl}

\vrule width 0pt height 20pt depth 12pt X_{3/5} &=& \cfrac{x_{2}^2}{x_1x_2x_3}\left(1 + \cfrac{y_1}{x_2} + \cfrac{y_3}{x_2} + \cfrac{y_1y_3}{x_{2}^2} + \cfrac{y_1y_2y_3x_1x_3}{x_{2}^2}\right)\\
\vrule width 0pt height 20pt depth 12pt&=& \cfrac{x_{2}^2}{x_1x_2x_3} + \cfrac{y_1x_{2}}{x_1x_2x_3} + \cfrac{y_3x_{2}}{x_1x_2x_3} + \cfrac{y_1y_3}{x_1x_2x_3} + \cfrac{y_1y_2y_3x_1x_3}{x_1x_2x_3}\\
\vrule width 0pt height 20pt depth 12pt&=& \cfrac{x_{2}^2 + y_1x_2 + y_3x_2 + y_1y_3 + y_1y_2y_3x_1x_3}{x_1x_2x_3}.\\
\end{array}
\]

On the other hand, the initial quiver of $\AT{3/5}$ is
\[
Q_0 = \xymatrix{1 \ar[r] & 2 & \ar[l] 3}
\]
and $X_{3/5}$ is the third element of $\mathbf{x}$ in the seed $(\mathbf{x},\mathbf{y},Q) = \mu_3\circ\mu_2\circ\mu_1(\mathbf{x}_0,\mathbf{y}_0,Q_0)$. Since the initial seed is same as Example \ref{exmp:cluster_mutation}, $(\mathbf{x},\mathbf{y},Q)$ is equal to $(\mathbf{x}_3,\mathbf{y}_3,Q_3)$ in Example \ref{exmp:cluster_mutation}. In particular,
\[
X_{3/5} = X_3' = \cfrac{x_{2}^2 + y_1x_2 + y_3x_2 + y_1y_3 + y_1y_2y_3x_1x_3}{x_1x_2x_3}.
\]
\end{ex}

\begin{cor}
\label{cor:mirror_F-polynomial}
$F$-polynomials $\Fp{p/q}$ and $\Fp{(q-p)/q}$ satisfy the following equation:
\[
\Fp{(q-p)/q} = y_1y_2\cdots y_N \overline{\Fp{p/q}}
\]
where $\overline{\Fp{p/q}}$ is obtained by substituting $y_i^{-1}$ for $y_i$ for $i = 1,2,\dots,N$.
\end{cor}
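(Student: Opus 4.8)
The plan is to read off both $F$-polynomials from Theorem~\ref{thm:cluster_expansion_formula} as path generating functions and to exploit the mirror symmetry of Proposition~\ref{prop:ancestral_triangle}(2). Write $p/q = \cf{n}{}$ with $a_1 > 1$, so that $(q-p)/q = [1,a_1-1,a_2,\dots,a_n]$ as in the proof of Proposition~\ref{prop:ancestral_triangle}(2). Summing the entries of these two continued fractions gives the same value, so $\AT{(q-p)/q}$ has exactly $N = l_n - 1$ triangles and the asserted identity makes sense; moreover it is symmetric under $p/q \leftrightarrow (q-p)/q$, since $\overline{(\,\cdot\,)}$ is an involution and $y_1\cdots y_N$ is a unit, so assuming $a_1 > 1$ costs nothing. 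Let $\phi$ denote the reflection identifying $\AT{p/q}$ with $\AT{(q-p)/q}$ furnished by Proposition~\ref{prop:ancestral_triangle}(2).

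First I would record how $\phi$ acts on labels and on paths. Exactly as in the proof of Proposition~\ref{prop:ancestral_triangle}(3), $\phi$ carries the vertex labelled $r/s$ to the vertex labelled $(s-r)/s$; in particular it preserves denominators and interchanges the two admissible endpoints $0/1$ and $1/1$. Hence $\phi$ sends a path of $\AT{p/q}$, namely a sequence of edges from $p/q$ to $0/1$ or $1/1$ with strictly decreasing denominators, to a sequence of edges from $(q-p)/q$ to $1/1$ or $0/1$ with strictly decreasing denominators, that is, to a path of $\AT{(q-p)/q}$; running the same argument backwards shows $\phi$ induces a bijection $\phi_*\colon \Gamma_{p/q}\to\Gamma_{(q-p)/q}$.

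Next I would compare the triangle sets. A reflection preserves height, so it sends the $i$-th triangle from the bottom of $\AT{p/q}$ to the $i$-th triangle from the bottom of $\AT{(q-p)/q}$, while interchanging right and left triangles; since the stacking recipe for $\AT{(q-p)/q}$ is exactly that of $\AT{p/q}$ with right and left swapped, $\phi$ identifies $\tri_i$ with the $i$-th triangle of $\AT{(q-p)/q}$ for every $i$. Because $\phi$ reverses orientation, the triangles lying to the left of $\phi_*(\path)$ are precisely the $\phi$-images of the triangles lying to the right of $\path$; as $\path$ splits $\{\tri_1,\dots,\tri_N\}$ into its left part $S_\path$ and its right part, this gives $\{\,i : \tri_i \in S_{\phi_*(\path)}\,\} = [1,N]\setminus\{\,i:\tri_i\in S_\path\,\}$. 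Therefore $\prod_{\tri_i\in S_{\phi_*(\path)}} y_i = (y_1y_2\cdots y_N)\prod_{\tri_i\in S_\path} y_i^{-1}$, and summing over $\path\in\Gamma_{p/q}$, using the formula for the $F$-polynomial from Theorem~\ref{thm:cluster_expansion_formula} for both $\Fp{(q-p)/q}$ and $\Fp{p/q}$, yields the claimed identity.

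The only genuinely delicate point is the triangle bookkeeping in the last step: one must confirm that under $\phi$ the index $i$ is truly preserved rather than reversed, and that every path partitions the full set of triangles into exactly its left and right parts, so that the complement of $S_\path$ is the right part. Both follow from the denominator-decreasing condition in the definition of a path together with the bottom-up construction of the ancestral triangle, and once they are in place the computation above is immediate.
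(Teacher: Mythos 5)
Your proposal is correct and follows essentially the same route as the paper: the mirror bijection $\path \mapsto \overline{\path}$ between $\Gamma_{p/q}$ and $\Gamma_{(q-p)/q}$ sending the vertex $r/s$ to $(s-r)/s$, the observation that $S_{\overline{\path}}$ is the complement of $S_\path$ in $\{\tri_1,\dots,\tri_N\}$, and the resulting factor $y_1\cdots y_N$ with inverted variables. Your extra checks (that the reflection preserves the bottom-up index of triangles, that denominators are preserved so paths map to paths, and the reduction to $a_1>1$) are correct elaborations of points the paper leaves implicit.
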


\begin{proof}
For a path $\path \in \Gamma_{p/q}$, let $\overline{\path} \in \Gamma_{(q-p)/q}$ be the mirror image of $\path$, that is, when $\path$ goes an edge $(r/s,t/u)$ of $\AT{p/q}$, $\overline{\path}$ goes the edge $((s-r)/s, (u-t)/u)$ of $\AT{(q-p)/q}$. Since this correspondence is bijective, a triangle is on the left side of ${\overline{\path}}$ in $\AT{(q-p)/q}$ if and only if the corresponding triangle is on the right side of ${{\path}}$ in $\AT{p/q}$. Thus we have $S_{\overline{\path}} = \{\tri_1,\tri_2,\dots,\tri_N\} \backslash S_\path$ and
\[
\begin{array}{rcl}
\Fp{(q-p)/q} &=& \sum_{{\overline{\path}} \in \Gamma_{(q-p)/q}} \prod_{\tri_i \in S_{\overline{\path}}} y_i\\
&=& \sum_{{\path} \in \Gamma_{p/q}} \prod_{\tri_i \in \{\tri_1,\tri_2,\dots,\tri_N\} \backslash S_\path} y_i\\
&=&  \sum_{{\path} \in \Gamma_{p/q}} y_1y_2\cdots y_N \prod_{\tri_i \in S_\path} y_i^{-1}\\
&=& y_1y_2\cdots y_N \overline{\Fp{p/q}}.
\end{array}
\]
\end{proof}

\begin{cor}
\label{cor:F-polynomial_recursion}
Let $\Fp{\cf{n}{}} = F_{p/q}$ if $p/q = \cf{n}{}$. $F$-polynomials satisfy the following equations:
\begin{equation}
\label{eqn:F-polynomial_identity}
\Fp{\cf{n}{-1,1}} = \Fp{\cf{n}{}};
\end{equation}
and if $a_n \ge 2$, $\Fp{\cf{n}{}} = $
\begin{equation}
\label{eqn:F-polynomial_recursion}
\begin{cases}
\Fp{\cf{n-1}{}} + y_N\Fp{\cf{n}{-1}} & \text{if $n$ is odd;}\\
\displaystyle \left(\prod_{i = l_{n-1}}^{N}y_i\right)\Fp{\cf{n-1}{}} + \Fp{\cf{n}{-1}} & \text{if $n$ is even.}
\end{cases}
\end{equation}
\end{cor}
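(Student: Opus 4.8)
The plan is to derive both equations from Theorem~\ref{thm:cluster_expansion_formula}: equation~(\ref{eqn:F-polynomial_identity}) almost immediately, and equation~(\ref{eqn:F-polynomial_recursion}) by specializing at $x_1=\dots=x_N=1$ the cluster-variable recursions already written out inside the proof of that theorem. For~(\ref{eqn:F-polynomial_identity}), where $a_n\ge 2$ is needed for $\cf{n}{-1,1}$ to be a genuine positive continued fraction, the point is that the two expansions $\cf{n}{}$ and $\cf{n}{-1,1}$ of $p/q$ produce the \emph{same} initial quiver $Q_0$ on $[1,N]$, $N=l_n-1$. Both give the same number $N=l_n-1$ of triangles; the fans $\fan_1,\dots,\fan_{n-1}$ coincide, and the first $a_n-1$ triangles of the top fan of $\cf{n}{}$ coincide, as indexed oriented triangles, with the $n$-th fan of $\cf{n}{-1,1}$; hence $\tri_i$ has the same orientation in the two ancestral triangles for every $i<N$, only $\tri_N$ having its orientation label reversed, which does not enter the construction of $Q_0$ (whose arrows come from $\tri_1,\dots,\tri_{N-1}$). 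Since the mutation sequence $\mu_N\circ\cdots\circ\mu_1$ is also the same, $\cv_{p/q}$ and therefore $\Fp{\cf{n}{}}=\Fp{\cf{n}{-1,1}}$ are unchanged. (Equivalently, the two ancestral triangles determine the same triangulated region with the same indexed triangles apart from the orientation label of $\tri_N$, so $\Gamma_{p/q}$ and all the sets $S_\path$ are unchanged and the formula for $F_{p/q}$ in Theorem~\ref{thm:cluster_expansion_formula} returns the same polynomial.)

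For~(\ref{eqn:F-polynomial_recursion}) assume $a_n\ge2$. The preliminary step is to recognize, inside $\AT{\cf{n}{}}$, two smaller ancestral triangles: the block of triangles $\tri_1,\dots,\tri_{N-1}$ is exactly $\AT{\cf{n}{-1}}$ (same indices, same orientations of all but the top triangle, top vertex the apex of $\tri_{N-1}$), and the block $\tri_1,\dots,\tri_{l_{n-1}-1}$ is exactly $\AT{\cf{n-1}{}}$ (top vertex the apex of $\tri_{l_{n-1}-1}$). It follows that the set $\Gamma_{p/q}^{(N-1)}$ of paths of $\AT{\cf{n}{}}$ starting at the apex of $\tri_{N-1}$ coincides with the set of all paths of $\AT{\cf{n}{-1}}$, and likewise $\Gamma_{p/q}^{(l_{n-1}-1)}$ with the path set of $\AT{\cf{n-1}{}}$, with the sets $S_\path$ computed identically in either description. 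Plugging this into the displayed identity $X_k'=\frac{D_k}{x_1\cdots x_k}\sum_{\path\in\Gamma_{p/q}^{(k)}}\w(\path)$ from the proof of Theorem~\ref{thm:cluster_expansion_formula}, and using that $\w(\tri_i)$ and $D_k$ become $y_i$ and $1$ respectively on setting every $x_i=1$, shows that the $F$-polynomial of the intermediate cluster variable $X_{N-1}'$ equals $\Fp{\cf{n}{-1}}$ and that of $X_{l_{n-1}-1}'$ equals $\Fp{\cf{n-1}{}}$.

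Now I read off the recursion for $\cv_{p/q}=X_N'$ from the mutation computation inside the proof of Theorem~\ref{thm:cluster_expansion_formula}. If $n=2k+1$ is odd and $a_n\ge2$, then $N=l_{2k+1}-1$ lies in the range $l_{2k}+1\le i\le l_{2k+1}-1$, so that computation gives $X_N'=(y_Nx_{N+1}X_{N-1}'+X_{l_{2k}-1}')/x_N$ with the convention $x_{N+1}=x_{l_n}=1$; setting all $x_i=1$ and using $l_{2k}-1=l_{n-1}-1$ gives $\Fp{\cf{n}{}}=\Fp{\cf{n-1}{}}+y_N\Fp{\cf{n}{-1}}$. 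If $n=2k$ is even and $a_n\ge2$, then $N=l_{2k}-1$ lies in the range $l_{2k-1}+1\le i\le l_{2k}-1$, so the same computation gives $X_N'=\big((\prod_{i=l_{2k-1}}^{N}y_i)\,X_{l_{2k-1}-1}'+x_{N+1}X_{N-1}'\big)/x_N$ with $x_{N+1}=1$; setting all $x_i=1$ and using $l_{2k-1}=l_{n-1}$ gives $\Fp{\cf{n}{}}=(\prod_{i=l_{n-1}}^{N}y_i)\Fp{\cf{n-1}{}}+\Fp{\cf{n}{-1}}$, which is~(\ref{eqn:F-polynomial_recursion}).

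The only part that is not just a substitution into already-proved formulas is the truncation bookkeeping in the second paragraph: one has to check that the indicated initial block of $\AT{\cf{n}{}}$ really is the ancestral triangle of the shortened continued fraction, with triangle indices and orientations lining up closely enough that $\Gamma_{p/q}^{(k)}$ is identified with the path set of that smaller triangle and the sets $S_\path$ are unaffected. Combinatorially, equation~(\ref{eqn:F-polynomial_recursion}) simply records the split of $\Gamma_{p/q}$ according to which of the two edges of the topmost triangle $\tri_N$ incident to $p/q$ a path uses for its first step.
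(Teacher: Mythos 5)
Your proposal is correct and takes essentially the same route as the paper: the paper proves (\ref{eqn:F-polynomial_recursion}) by splitting $\Gamma_{p/q}$ according to whether a path's first edge goes to $\cf{n}{-1}$ or to $\cf{n-1}{}$ and tracking which triangles then lie to the left, which is precisely the content of the intermediate recursions from the proof of Theorem \ref{thm:cluster_expansion_formula} that you specialize at $x_1=\dots=x_N=1$, and your ``truncation bookkeeping'' (that $\AT{\cf{n}{-1}}$ and $\AT{\cf{n-1}{}}$ sit inside $\AT{\cf{n}{}}$ as $\tri_1,\dots,\tri_{N-1}$ and $\tri_1,\dots,\tri_{l_{n-1}-1}$) is exactly what the paper uses implicitly. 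Your treatment of (\ref{eqn:F-polynomial_identity}) is more detailed than the paper's ``is clear'' but reaches the same conclusion for the same reason.
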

\begin{proof}
(\ref{eqn:F-polynomial_identity}) is clear. We prove (\ref{eqn:F-polynomial_recursion}). Let $\alpha = \cf{n}{-1}$ and $\beta = \cf{n-1}{}$. All $\path \in \Gamma_{p/q}$ has the form $((p/q,\alpha),\path')$ or $((p/q,\beta),\path'')$ where $\path'$ is a path from $\alpha$ and $\path''$ is a path from $\beta$. If $n$ is odd,  triangles on the left side of $\gamma$ are same as those of $\gamma'$ in the former case and are those of $\gamma''$ and a triangle $\tri_N$ in the latter case. If $n$ is even,  triangles on the left side of $\gamma$ are same as those of $\gamma''$ in the latter case and are those of $\gamma'$ and triangles $\tri_{l_{n-1}},\tri_{l_{n-1}+1},\dots,\tri_{N}$ in the former case. 
\end{proof}

\begin{cor}
\label{cor:F-polynomial_2_recursion}
With the notation in Corollary \ref{cor:F-polynomial_recursion}, $F$-polynomials satisfy the following recursions with the convention that $\Fp{\cf{0}{}} = 1$ and $\Fp{\cf{n-1}{,0}} = \Fp{\cf{n-2}{}}$:

If $n$ is odd, then 
\begin{equation}
\label{eqn:F-polynomial_rec_odd2_n12}
\Fp{\cf{n-1}{,2}} = (1 + y_N)\Fp{\cf{n-1}{}} + \left(\prod_{i=l_{n-2}}^{N}y_i\right)\Fp{\cf{n-2}{}};
\end{equation}
\begin{equation}
\label{eqn:F-polynomial_rec_odd2_12}
\Fp{\cf{n-1}{,2}} = (1 + y_N)\Fp{\cf{n-1}{,1}} - \left(\prod_{i=l_{n-2}}^{N-1}y_i\right)\Fp{\cf{n-2}{}};
\end{equation}

and for $a_i \ge 3$, 
\begin{align}
\label{eqn:F-polynomial_rec_odd_12}
\Fp{\cf{n}{}} &= (1 + y_N)\Fp{\cf{n}{-1}} - y_{N-1}\Fp{\cf{n}{-2}};\\
\label{eqn:F-polynomial_rec_odd_2n1}
\Fp{\cf{n}{}} &= (1 + y_N)\Fp{\cf{n-1}{}} + y_{N-1}y_N\Fp{\cf{n}{-2}}.
\end{align}

If $n$ is even and $a_n \ge 2$, then 
\begin{align}
\label{eqn:F-polynomial_rec_even_12}
\Fp{\cf{n}{}} &= (1 + y_N)\Fp{\cf{n}{-1}} - y_N\Fp{\cf{n}{-2}};\\
\label{eqn:F-polynomial_rec_even_2n1}
\Fp{\cf{n}{}} &= \displaystyle\left(\prod_{i=l_{n-1}}^{N-1}y_i\right)(1 + y_N)\Fp{\cf{n-1}{}} + \Fp{\cf{n}{-2}}.
\end{align}
%(\ref{eqn:F-polynomial_rec_odd_12}) and (\ref{eqn:F-polynomial_rec_odd_2n1})  hold if $a_n \ge 3$.  (\ref{eqn:F-polynomial_rec_even_12}) and (\ref{eqn:F-polynomial_rec_even_2n1}) hold if $a_n \ge 2$.
\end{cor}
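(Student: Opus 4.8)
The plan is to derive every identity in the list by substituting the two first‑order recursions of Corollary~\ref{cor:F-polynomial_recursion} into themselves one more time, keeping (\ref{eqn:F-polynomial_identity}) on hand to cope with the boundary case in which a continued fraction acquires a trailing entry $1$. Concretely, I would first read (\ref{eqn:F-polynomial_identity}) as the assertion that $\Fp{p/q}$ depends only on the fraction $p/q$ and not on which of its two continued fraction expansions is used; equivalently, after reindexing, $\Fp{[a_1,\dots,a_k,1]}=\Fp{[a_1,\dots,a_{k-1},a_k+1]}$ for all positive integers $a_1,\dots,a_k$. This is precisely what allows a second application of Corollary~\ref{cor:F-polynomial_recursion} to a continued fraction whose last entry would otherwise be $1$.

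The first block of identities to prove are the ``$\Fp{\cf{n-1}{}}$‑forms'': (\ref{eqn:F-polynomial_rec_odd2_n12}), (\ref{eqn:F-polynomial_rec_odd_2n1}) and (\ref{eqn:F-polynomial_rec_even_2n1}). In each case apply Corollary~\ref{cor:F-polynomial_recursion} once to the continued fraction in question (legitimate, since its last entry is $\ge 2$ in all the hypotheses), expressing it through $\Fp{\cf{n-1}{}}$ and $\Fp{\cf{n}{-1}}$ (resp.\ through $\Fp{\cf{n-1}{}}$ and $\Fp{[a_1,\dots,a_{n-1},1]}$ for (\ref{eqn:F-polynomial_rec_odd2_n12})). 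Then apply Corollary~\ref{cor:F-polynomial_recursion} a second time to the intermediate term: when the decremented entry is still $\ge 2$ this is immediate, and the parameter $N$ of the intermediate continued fraction equals $N-1$; when that entry equals $1$, first rewrite $[a_1,\dots,a_k,1]=[a_1,\dots,a_{k-1},a_k+1]$ via (\ref{eqn:F-polynomial_identity}) and check that the formula produced is exactly the reading of the general one under the conventions $\Fp{\cf{n}{-2}}=\Fp{\cf{n-1}{,0}}=\Fp{\cf{n-2}{}}$ and $\Fp{\cf{0}{}}=1$. Substituting and collecting, using $\prod_{i=l_{n-1}}^{N}y_i=y_N\prod_{i=l_{n-1}}^{N-1}y_i$ (and the analogous identity with lower limit $l_{n-2}$), yields the three identities; the coefficient of $\Fp{\cf{n-1}{}}$ becomes $1+y_N$ in the odd case and $(\prod_{i=l_{n-1}}^{N-1}y_i)(1+y_N)$ in the even case.

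The second block — the ``$\Fp{\cf{n}{-1}}$‑forms'' (\ref{eqn:F-polynomial_rec_odd_12}), (\ref{eqn:F-polynomial_rec_even_12}) and the companion (\ref{eqn:F-polynomial_rec_odd2_12}) — comes from the very same two relations, but by elimination rather than substitution: solve the second application of Corollary~\ref{cor:F-polynomial_recursion} for $\Fp{\cf{n-1}{}}$ (resp.\ for $\Fp{\cf{n-1}{,1}}$ in the case of (\ref{eqn:F-polynomial_rec_odd2_12})) and plug it into the first. For instance (\ref{eqn:F-polynomial_rec_odd_12}) follows by eliminating $\Fp{\cf{n-1}{}}$ between $\Fp{\cf{n}{}}=\Fp{\cf{n-1}{}}+y_N\Fp{\cf{n}{-1}}$ and $\Fp{\cf{n}{-1}}=\Fp{\cf{n-1}{}}+y_{N-1}\Fp{\cf{n}{-2}}$; (\ref{eqn:F-polynomial_rec_even_12}) by eliminating $(\prod_{i=l_{n-1}}^{N-1}y_i)\Fp{\cf{n-1}{}}$ in the even analogue; and (\ref{eqn:F-polynomial_rec_odd2_12}) from (\ref{eqn:F-polynomial_rec_odd2_n12}) by substituting $\Fp{\cf{n-1}{}}=\Fp{\cf{n-1}{,1}}-(\prod_{i=l_{n-2}}^{N-1}y_i)\Fp{\cf{n-2}{}}$, whereupon the coefficient of $\Fp{\cf{n-2}{}}$ collapses to $-\prod_{i=l_{n-2}}^{N-1}y_i$ because $y_N-(1+y_N)=-1$. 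Each of these is a two‑line computation.

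The only genuine difficulty — and the step I expect to cost the most care — is the bookkeeping of the index ranges of the $y_i$'s and of the cumulative sums $l_k$ under the two substitutions $\cf{n}{}\mapsto\cf{n}{-1}\mapsto\cf{n}{-2}$ and $[a_1,\dots,a_k,1]\mapsto[a_1,\dots,a_{k-1},a_k+1]$: one must verify that the parameter $N$ drops by exactly $1$ at each decrement of the last entry, that $l_{n-1}$ and $l_{n-2}$ are the correct lower limits of the products, and — crucially — that Corollary~\ref{cor:F-polynomial_recursion} really is applicable at the second step, which fails precisely when the intermediate continued fraction ends in $1$, i.e.\ exactly the case $a_n=2$ that is split off in (\ref{eqn:F-polynomial_rec_odd2_n12})–(\ref{eqn:F-polynomial_rec_odd2_12}) for $n$ odd and, for $n$ even, absorbed into (\ref{eqn:F-polynomial_rec_even_12})–(\ref{eqn:F-polynomial_rec_even_2n1}) via (\ref{eqn:F-polynomial_identity}) together with the conventions. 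An entirely parallel combinatorial proof is also available: each identity can be read off from Theorem~\ref{thm:cluster_expansion_formula} by classifying the paths in $\AT{p/q}$ according to their first one or two edges near the top vertex $p/q$, which abut the top triangles $\tri_N$ and $\tri_{N-1}$, exactly as in the proof of Corollary~\ref{cor:F-polynomial_recursion}; I would nonetheless keep the algebraic derivation as the main one, since it is shorter.
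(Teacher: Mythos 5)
Your proposal is correct and takes essentially the same route as the paper, whose entire proof is the single sentence ``Applying Corollary~\ref{cor:F-polynomial_recursion} twice yields these recursions''; your substitution/elimination computations and your handling of the $a_n=2$ boundary via (\ref{eqn:F-polynomial_identity}) and the stated conventions are exactly the details that one-liner suppresses, and they check out.
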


\begin{proof}
Applying Corollary \ref{cor:F-polynomial_recursion} twice yields these recursions.
\end{proof}

\section{Alexander polynomial}
\label{section:alexander_polynomial}
\subsection{Two-bridge links}
In this subsection, we review a notation of two-bridge links. We denote a $2$-strand braid with $m$ crossings by
\[
\raisebox{-3mm}{\begin{xy}
(4,4)*{m}
\ar@{-} (0,0);(8,0)
\ar@{-} (8,0);(8,8)
\ar@{-} (8,8);(0,8)
\ar@{-} (0,8);(0,0)
\end{xy}} ~=~ 
\begin{cases}
\overbrace{
	\begin{xy}
	(0,5)="A",(5,5)="B",(10,5)="C",(15,5)="D",(20,5)="E",
	(0,0)="F",(5,0)="G",(10,0)="H",(15,0)="I",(20,0)="J",
	(12.5,2.5)*{\cdots},
	\ar@{-} "A";"G"
	\ar@{-} "B";"H"
	\ar@{-} "D";"J"
	\ar@{-} "F";"B"|!{"A";"G"}\hole
	\ar@{-} "G";"C"|!{"B";"H"}\hole
	\ar@{-} "I";"E"|!{"D";"J"}\hole
	\end{xy}
}^{m} & \raisebox{7pt}{\text{if $m > 0$}}\\
\overbrace{
	\begin{xy}
	(0,5)="A",(5,5)="B",(10,5)="C",(15,5)="D",(20,5)="E",
	(0,0)="F",(5,0)="G",(10,0)="H",(15,0)="I",(20,0)="J",
	(12.5,2.5)*{\cdots},
	\ar@{-} "F";"B"
	\ar@{-} "G";"C"
	\ar@{-} "I";"E"
	\ar@{-} "A";"G"|!{"F";"B"}\hole
	\ar@{-} "B";"H"|!{"G";"C"}\hole
	\ar@{-} "D";"J"|!{"I";"E"}\hole
	\end{xy}
}^{m} & \raisebox{7pt}{\text{if $m < 0$}}
\end{cases}
\]

For a continued fraction $\cf{n}{}$, a link who has the link diagram of the  form in Figure \ref{fig:two-brigde_knot} is called a \emph{two-bridge link}. We denote it by $\tb(\cf{n}{})$. It is also denoted by $\tb(p/q)$ if $\cf{n}{}$ is a continued fraction expansion of $p/q$. $\tb(p/q)$ is well-defined because of Proposition \ref{prop:two-bridge_knot_prop} (3). 
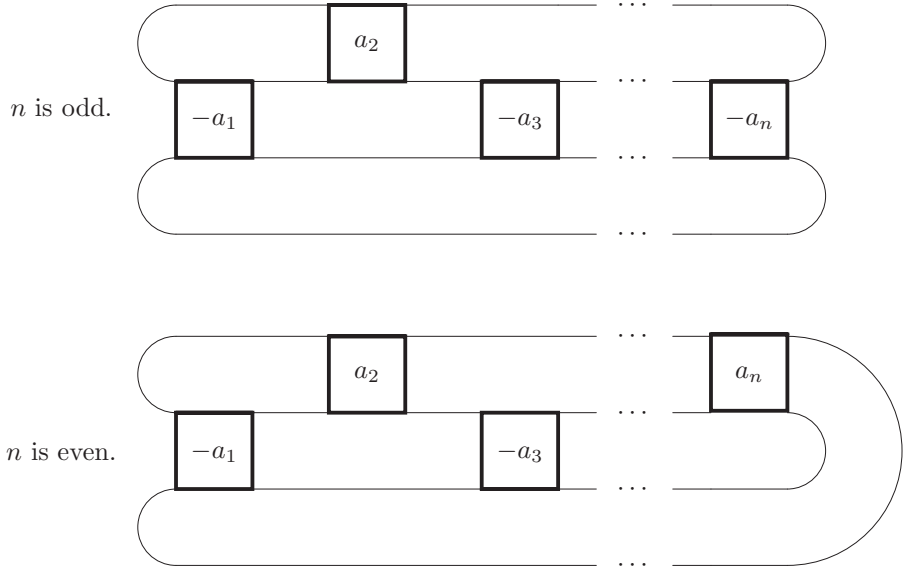
\begin{figure}
	\centering
	%WinTpicVersion4.32a
{\unitlength 0.1in%
\begin{picture}(46.9000,30.0000)(5.1000,-34.0000)%
% CIRCLE 3 0 3 0 Black White  
% 4 1400 1465 1400 1265 1400 1265 1400 1665
% 
{\color[named]{Black}{%
\special{pn 4}%
\special{ar 1400 1465 200 200 1.5707963 4.7123890}%
}}%
% CIRCLE 3 0 3 0 Black White  
% 4 1400 665 1400 465 1400 465 1400 865
% 
{\color[named]{Black}{%
\special{pn 4}%
\special{ar 1400 665 200 200 1.5707963 4.7123890}%
}}%
% BOX 0 0 3 0 Black White  
% 2 1400 865 1800 1265
% 
{\color[named]{Black}{%
\special{pn 20}%
\special{pa 1400 865}%
\special{pa 1800 865}%
\special{pa 1800 1265}%
\special{pa 1400 1265}%
\special{pa 1400 865}%
\special{pa 1800 865}%
\special{fp}%
}}%
% BOX 0 0 3 0 Black White  
% 2 2200 465 2600 865
% 
{\color[named]{Black}{%
\special{pn 20}%
\special{pa 2200 465}%
\special{pa 2600 465}%
\special{pa 2600 865}%
\special{pa 2200 865}%
\special{pa 2200 465}%
\special{pa 2600 465}%
\special{fp}%
}}%
% BOX 0 0 3 0 Black White  
% 2 3000 865 3400 1265
% 
{\color[named]{Black}{%
\special{pn 20}%
\special{pa 3000 865}%
\special{pa 3400 865}%
\special{pa 3400 1265}%
\special{pa 3000 1265}%
\special{pa 3000 865}%
\special{pa 3400 865}%
\special{fp}%
}}%
% STR 2 0 3 0 Black White  
% 4 1600 965 1600 1065 5 0 0 0
% $-a_1$
\put(16.0000,-10.6500){\makebox(0,0){{\color[named]{Black}{$-a_1$}}}}%
% STR 2 0 3 0 Black White  
% 4 2400 565 2400 665 5 0 0 0
% $a_2$
\put(24.0000,-6.6500){\makebox(0,0){{\color[named]{Black}{$a_2$}}}}%
% STR 2 0 3 0 Black White  
% 4 3200 965 3200 1065 5 0 0 0
% $-a_3$
\put(32.0000,-10.6500){\makebox(0,0){{\color[named]{Black}{$-a_3$}}}}%
% STR 2 0 3 0 Black White  
% 4 3800 365 3800 465 5 0 0 0
% $\cdots$
\put(38.0000,-4.6500){\makebox(0,0){{\color[named]{Black}{$\cdots$}}}}%
% STR 2 0 3 0 Black White  
% 4 3800 765 3800 865 5 0 0 0
% $\cdots$
\put(38.0000,-8.6500){\makebox(0,0){{\color[named]{Black}{$\cdots$}}}}%
% STR 2 0 3 0 Black White  
% 4 3800 1165 3800 1265 5 0 0 0
% $\cdots$
\put(38.0000,-12.6500){\makebox(0,0){{\color[named]{Black}{$\cdots$}}}}%
% STR 2 0 3 0 Black White  
% 4 3800 1565 3800 1665 5 0 0 0
% $\cdots$
\put(38.0000,-16.6500){\makebox(0,0){{\color[named]{Black}{$\cdots$}}}}%
% LINE 3 0 3 0 Black White  
% 26 1400 465 2200 465 2600 465 3400 465 3000 865 2600 865 2200 865 1800 865 3400 865 3600 865 3400 465 3600 465 3600 1265 3400 1265 3000 1265 1800 1265 1400 1665 3600 1665 4000 1665 4200 1665 4000 1265 4200 1265 4000 865 4200 865 4000 465 4200 465
% 
{\color[named]{Black}{%
\special{pn 4}%
\special{pa 1400 465}%
\special{pa 2200 465}%
\special{fp}%
\special{pa 2600 465}%
\special{pa 3400 465}%
\special{fp}%
\special{pa 3000 865}%
\special{pa 2600 865}%
\special{fp}%
\special{pa 2200 865}%
\special{pa 1800 865}%
\special{fp}%
\special{pa 3400 865}%
\special{pa 3600 865}%
\special{fp}%
\special{pa 3400 465}%
\special{pa 3600 465}%
\special{fp}%
\special{pa 3600 1265}%
\special{pa 3400 1265}%
\special{fp}%
\special{pa 3000 1265}%
\special{pa 1800 1265}%
\special{fp}%
\special{pa 1400 1665}%
\special{pa 3600 1665}%
\special{fp}%
\special{pa 4000 1665}%
\special{pa 4200 1665}%
\special{fp}%
\special{pa 4000 1265}%
\special{pa 4200 1265}%
\special{fp}%
\special{pa 4000 865}%
\special{pa 4200 865}%
\special{fp}%
\special{pa 4000 465}%
\special{pa 4200 465}%
\special{fp}%
}}%
% BOX 0 0 3 0 Black White  
% 2 4200 865 4600 1265
% 
{\color[named]{Black}{%
\special{pn 20}%
\special{pa 4200 865}%
\special{pa 4600 865}%
\special{pa 4600 1265}%
\special{pa 4200 1265}%
\special{pa 4200 865}%
\special{pa 4600 865}%
\special{fp}%
}}%
% STR 2 0 3 0 Black White  
% 4 4400 965 4400 1065 5 0 0 0
% $-a_n$
\put(44.0000,-10.6500){\makebox(0,0){{\color[named]{Black}{$-a_n$}}}}%
% CIRCLE 3 0 3 0 Black White  
% 4 4600 665 4600 465 4600 865 4600 465
% 
{\color[named]{Black}{%
\special{pn 4}%
\special{ar 4600 665 200 200 4.7123890 1.5707963}%
}}%
% CIRCLE 3 0 3 0 Black White  
% 4 4600 1465 4600 1265 4600 1665 4600 1265
% 
{\color[named]{Black}{%
\special{pn 4}%
\special{ar 4600 1465 200 200 4.7123890 1.5707963}%
}}%
% LINE 3 0 3 0 Black White  
% 4 4200 465 4600 465 4600 1665 4200 1665
% 
{\color[named]{Black}{%
\special{pn 4}%
\special{pa 4200 465}%
\special{pa 4600 465}%
\special{fp}%
\special{pa 4600 1665}%
\special{pa 4200 1665}%
\special{fp}%
}}%
% STR 2 0 3 0 Black White  
% 4 800 900 800 1000 5 0 0 0
% $n$ is odd.
\put(8.0000,-10.0000){\makebox(0,0){{\color[named]{Black}{$n$ is odd.}}}}%
% BOX 0 0 3 0 Black White  
% 2 2200 2200 2600 2600
% 
{\color[named]{Black}{%
\special{pn 20}%
\special{pa 2200 2200}%
\special{pa 2600 2200}%
\special{pa 2600 2600}%
\special{pa 2200 2600}%
\special{pa 2200 2200}%
\special{pa 2600 2200}%
\special{fp}%
}}%
% BOX 0 0 3 0 Black White  
% 2 1400 2600 1800 3000
% 
{\color[named]{Black}{%
\special{pn 20}%
\special{pa 1400 2600}%
\special{pa 1800 2600}%
\special{pa 1800 3000}%
\special{pa 1400 3000}%
\special{pa 1400 2600}%
\special{pa 1800 2600}%
\special{fp}%
}}%
% BOX 0 0 3 0 Black White  
% 2 3000 3000 3400 2600
% 
{\color[named]{Black}{%
\special{pn 20}%
\special{pa 3000 3000}%
\special{pa 3400 3000}%
\special{pa 3400 2600}%
\special{pa 3000 2600}%
\special{pa 3000 3000}%
\special{pa 3400 3000}%
\special{fp}%
}}%
% BOX 0 0 3 0 Black White  
% 2 4200 2190 4600 2590
% 
{\color[named]{Black}{%
\special{pn 20}%
\special{pa 4200 2190}%
\special{pa 4600 2190}%
\special{pa 4600 2590}%
\special{pa 4200 2590}%
\special{pa 4200 2190}%
\special{pa 4600 2190}%
\special{fp}%
}}%
% LINE 3 0 3 0 Black White  
% 28 1800 2600 2200 2600 2600 2600 3000 2600 3400 2600 3600 2600 4000 2600 4200 2600 4200 2200 4000 2200 3600 2200 2600 2200 2200 2200 1400 2200 1800 3000 3000 3000 3400 3000 3600 3000 1400 3400 3600 3400 4000 3400 4200 3400 4000 3000 4200 3000 4200 3000 4600 3000 4200 3400 4600 3400
% 
{\color[named]{Black}{%
\special{pn 4}%
\special{pa 1800 2600}%
\special{pa 2200 2600}%
\special{fp}%
\special{pa 2600 2600}%
\special{pa 3000 2600}%
\special{fp}%
\special{pa 3400 2600}%
\special{pa 3600 2600}%
\special{fp}%
\special{pa 4000 2600}%
\special{pa 4200 2600}%
\special{fp}%
\special{pa 4200 2200}%
\special{pa 4000 2200}%
\special{fp}%
\special{pa 3600 2200}%
\special{pa 2600 2200}%
\special{fp}%
\special{pa 2200 2200}%
\special{pa 1400 2200}%
\special{fp}%
\special{pa 1800 3000}%
\special{pa 3000 3000}%
\special{fp}%
\special{pa 3400 3000}%
\special{pa 3600 3000}%
\special{fp}%
\special{pa 1400 3400}%
\special{pa 3600 3400}%
\special{fp}%
\special{pa 4000 3400}%
\special{pa 4200 3400}%
\special{fp}%
\special{pa 4000 3000}%
\special{pa 4200 3000}%
\special{fp}%
\special{pa 4200 3000}%
\special{pa 4600 3000}%
\special{fp}%
\special{pa 4200 3400}%
\special{pa 4600 3400}%
\special{fp}%
}}%
% CIRCLE 3 0 3 0 Black White  
% 4 1400 3200 1400 3000 1400 3000 1400 3400
% 
{\color[named]{Black}{%
\special{pn 4}%
\special{ar 1400 3200 200 200 1.5707963 4.7123890}%
}}%
% CIRCLE 3 0 3 0 Black White  
% 4 1400 2400 1400 2200 1400 2200 1400 2600
% 
{\color[named]{Black}{%
\special{pn 4}%
\special{ar 1400 2400 200 200 1.5707963 4.7123890}%
}}%
% CIRCLE 3 0 3 0 Black White  
% 4 4600 2800 4600 2600 4600 3000 4600 2600
% 
{\color[named]{Black}{%
\special{pn 4}%
\special{ar 4600 2800 200 200 4.7123890 1.5707963}%
}}%
% CIRCLE 3 0 3 0 Black White  
% 4 4600 2800 4600 2200 4600 3400 4600 2200
% 
{\color[named]{Black}{%
\special{pn 4}%
\special{ar 4600 2800 600 600 4.7123890 1.5707963}%
}}%
% STR 2 0 3 0 Black White  
% 4 3800 2100 3800 2200 5 0 0 0
% $\cdots$
\put(38.0000,-22.0000){\makebox(0,0){{\color[named]{Black}{$\cdots$}}}}%
% STR 2 0 3 0 Black White  
% 4 3800 2500 3800 2600 5 0 0 0
% $\cdots$
\put(38.0000,-26.0000){\makebox(0,0){{\color[named]{Black}{$\cdots$}}}}%
% STR 2 0 3 0 Black White  
% 4 3800 2900 3800 3000 5 0 0 0
% $\cdots$
\put(38.0000,-30.0000){\makebox(0,0){{\color[named]{Black}{$\cdots$}}}}%
% STR 2 0 3 0 Black White  
% 4 3800 3300 3800 3400 5 0 0 0
% $\cdots$
\put(38.0000,-34.0000){\makebox(0,0){{\color[named]{Black}{$\cdots$}}}}%
% STR 2 0 3 0 Black White  
% 4 1600 2710 1600 2810 5 0 0 0
% $-a_1$
\put(16.0000,-28.1000){\makebox(0,0){{\color[named]{Black}{$-a_1$}}}}%
% STR 2 0 3 0 Black White  
% 4 2400 2300 2400 2400 5 0 0 0
% $a_2$
\put(24.0000,-24.0000){\makebox(0,0){{\color[named]{Black}{$a_2$}}}}%
% STR 2 0 3 0 Black White  
% 4 3200 2710 3200 2810 5 0 0 0
% $-a_3$
\put(32.0000,-28.1000){\makebox(0,0){{\color[named]{Black}{$-a_3$}}}}%
% STR 2 0 3 0 Black White  
% 4 4400 2300 4400 2400 5 0 0 0
% $a_n$
\put(44.0000,-24.0000){\makebox(0,0){{\color[named]{Black}{$a_n$}}}}%
% STR 2 0 3 0 Black White  
% 4 800 2700 800 2800 5 0 0 0
% $n$ is even.
\put(8.0000,-28.0000){\makebox(0,0){{\color[named]{Black}{$n$ is even.}}}}%
\end{picture}}%
	\caption{Two-bridge link diagrams. Each box represents a $2$-strand braid.}
	\label{fig:two-brigde_knot}
\end{figure}

We list a few facts on two-bridge links.
\begin{prop}
\label{prop:two-bridge_knot_prop}
\begin{enumerate}
\item $\tb((q-p)/q)$ is the mirror image of $\tb(p/q)$.
\item $\tb(p/q)$ is a knot if $q$ is odd and a $2$-component link if $q$ is even. 
\item $\tb(\cf{n}{})$ and $\tb(\cf{n}{-1,1})$ are equivalent.
\end{enumerate}
\end{prop}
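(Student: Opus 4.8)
The three statements are classical facts about two-bridge (rational) links, and the plan is to read each of them off the $4$-plat diagram of Figure~\ref{fig:two-brigde_knot}. I would establish (3) first, since it is exactly what makes the symbol $\tb(p/q)$ meaningful. For (3): the two continued-fraction expansions of $p/q$ differ only in the last entry, $\cf{n}{}=\cf{n}{-1,1}$ (with $a_n\ge 2$), and the corresponding diagrams coincide away from the right-hand closure. There the diagram of $\tb(\cf{n}{})$ has the last twist box, carrying $\pm a_n$ crossings with the sign dictated by Figure~\ref{fig:two-brigde_knot}, abutting the nested closure arcs; a planar isotopy that pushes one crossing of this box across the outermost closure arc turns it into a single crossing in the transverse direction and of the opposite sign --- precisely the box associated with the appended entry $1$ --- while the remaining $\pm(a_n-1)$ crossings form the new penultimate box. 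Everything to the left is untouched, so $\tb(\cf{n}{})\cong\tb(\cf{n}{-1,1})$; as these are the only two expansions, $\tb(p/q):=\tb(\cf{n}{})$ is well defined.

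For (1): the mirror image of a link is obtained by reversing every crossing of a diagram, which in Figure~\ref{fig:two-brigde_knot} replaces each box $\pm a_i$ by $\mp a_i$. Choosing the expansion of $p/q$ with $a_1\ge 2$, one has $q/p-1=a_1-1+1/(a_2+\cdots+1/a_n)=(q-p)/p$, hence $1/(a_1-1+\cdots)=p/(q-p)$ and $1+p/(q-p)=q/(q-p)$, so that $(q-p)/q=[1,a_1-1,a_2,\dots,a_n]$ --- the very identity used in the proof of Proposition~\ref{prop:ancestral_triangle}(2). It then remains to recognize the all-crossings-reversed diagram of $\tb([a_1,\dots,a_n])$, whose boxes read $+a_1,-a_2,+a_3,\dots$, as the standard diagram of $\tb([1,a_1-1,a_2,\dots,a_n])$, whose boxes read $-1,+(a_1-1),-a_2,+a_3,\dots$: the same move as in (3), applied at the left-hand closure, splits the leading $+a_1$ box into a transverse $-1$ box and a $+(a_1-1)$ box, after which all signs match Figure~\ref{fig:two-brigde_knot} for an $(n+1)$-entry expansion. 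Alternatively, one may invoke Schubert's normal form, in which $\tb(p/q)$ has mirror image obtained by negating the secondary parameter, i.e.\ $\tb((q-p)/q)$.

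For (2): I would trace the two strands of the $4$-plat through the boxes $a_1,\dots,a_n$ from left to right, recording after the first $k$ boxes whether the pair of strands leaving on the right is ``parallel'' (each joined to its own strand at the left closure) or ``transposed'', and prove by induction on $k$ --- using the convergent recursion $p_k/q_k=(a_kp_{k-1}+p_{k-2})/(a_kq_{k-1}+q_{k-2})$ --- that this alternative is governed by $q_k\bmod 2$: a box with an even number of crossings preserves the configuration and an odd box switches it, in lockstep with the change in parity of $q_k$ (coherently, since $\gcd(p_k,q_k)=1$). Closing up a ``parallel'' end, which by the induction occurs exactly when $q=q_n$ is odd, produces a single component; closing up a ``transposed'' end ($q$ even) produces two. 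This component count is also standard and may simply be cited from a textbook on knot theory.

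The main obstacle in all three parts is purely a matter of bookkeeping: turning ``slide a crossing past the closure arc'' and ``track the two strands through the plat'' into explicit finite sequences of Reidemeister moves and planar isotopies while keeping the alternating signs of the twist boxes straight. None of the three items carries any conceptual difficulty beyond this, and each can be replaced, if desired, by a citation to the standard literature on two-bridge links.
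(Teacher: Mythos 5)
The paper does not prove this proposition at all: it is introduced with ``We list a few facts on two-bridge links'' and left as standard material, so there is no argument of the authors' to compare yours against. Your sketches follow the usual textbook route (rational-tangle/4-plat manipulations plus the continued-fraction identities), and the overall plan is sound; in particular the identity $(q-p)/q=[1,a_1-1,a_2,\dots,a_n]$ you use in (1) is exactly the one the paper itself establishes in the proof of Proposition~\ref{prop:ancestral_triangle}(2), and doing (3) first to justify the notation $\tb(p/q)$ is the right order. Two places where your bookkeeping is glossed over and would need attention if written out: in (1), after reversing all crossings the boxes $+a_1,-a_2,+a_3,\dots$ occupy the same (lower, upper, lower, \dots) positions as before, whereas the standard diagram of $[1,a_1-1,a_2,\dots,a_n]$ places $-a_2$ in a lower-row box; you need an additional symmetry of the 4-plat (a $\pi$-rotation about the horizontal axis, which preserves the link type and swaps the two rows) before the ``split the leading box'' move finishes the identification. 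In (2), a 4-plat has three, not two, relevant pairing states of its four endpoints --- these are precisely the mod-2 Farey classes $0/1$, $1/1$, $1/0$ that the paper later exploits for the Seifert path --- so your two-state ``parallel/transposed'' induction should either be refined to track all three states against the pair $(p_k,q_k)\bmod 2$, or replaced by the citation you offer. With those repairs (or with a reference to the standard literature, which is evidently what the authors intend), the proposal is fine.
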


\begin{ex}
The two-bridge link $\tb(7/19) = \tb([2,1,2,2])$ is shown in Figure \ref{fig:example_of_two-brigde_knot}.
\begin{figure}
	\centering
	%WinTpicVersion4.32a
{\unitlength 0.1in%
\begin{picture}(42.0000,12.0000)(12.0000,-26.0000)%
% LINE 2 0 3 0 Black White  
% 2 2400 1400 2800 1800
% 
{\color[named]{Black}{%
\special{pn 8}%
\special{pa 2400 1400}%
\special{pa 2800 1800}%
\special{fp}%
}}%
% LINE 2 0 3 0 Black White  
% 2 2800 1800 3000 1800
% 
{\color[named]{Black}{%
\special{pn 8}%
\special{pa 2800 1800}%
\special{pa 3000 1800}%
\special{fp}%
}}%
% LINE 2 0 3 0 Black White  
% 2 1400 1400 2400 1400
% 
{\color[named]{Black}{%
\special{pn 8}%
\special{pa 1400 1400}%
\special{pa 2400 1400}%
\special{fp}%
}}%
% LINE 2 0 3 0 Black White  
% 2 2200 1800 2400 1800
% 
{\color[named]{Black}{%
\special{pn 8}%
\special{pa 2200 1800}%
\special{pa 2400 1800}%
\special{fp}%
}}%
% LINE 2 0 3 0 Black White  
% 2 2650 1550 2800 1400
% 
{\color[named]{Black}{%
\special{pn 8}%
\special{pa 2650 1550}%
\special{pa 2800 1400}%
\special{fp}%
}}%
% LINE 2 0 3 0 Black White  
% 2 2400 1800 2550 1650
% 
{\color[named]{Black}{%
\special{pn 8}%
\special{pa 2400 1800}%
\special{pa 2550 1650}%
\special{fp}%
}}%
% LINE 2 0 3 0 Black White  
% 2 3800 1800 4000 1800
% 
{\color[named]{Black}{%
\special{pn 8}%
\special{pa 3800 1800}%
\special{pa 4000 1800}%
\special{fp}%
}}%
% LINE 2 0 3 0 Black White  
% 2 2200 2200 3000 2200
% 
{\color[named]{Black}{%
\special{pn 8}%
\special{pa 2200 2200}%
\special{pa 3000 2200}%
\special{fp}%
}}%
% LINE 2 0 3 0 Black White  
% 2 1400 2600 4800 2600
% 
{\color[named]{Black}{%
\special{pn 8}%
\special{pa 1400 2600}%
\special{pa 4800 2600}%
\special{fp}%
}}%
% LINE 2 0 3 0 Black White  
% 2 4000 1400 2800 1400
% 
{\color[named]{Black}{%
\special{pn 8}%
\special{pa 4000 1400}%
\special{pa 2800 1400}%
\special{fp}%
}}%
% LINE 2 0 3 0 Black White  
% 2 3800 2200 4800 2200
% 
{\color[named]{Black}{%
\special{pn 8}%
\special{pa 3800 2200}%
\special{pa 4800 2200}%
\special{fp}%
}}%
% CIRCLE 2 0 3 0 Black White  
% 4 1400 2400 1400 2600 1400 2200 1400 2600
% 
{\color[named]{Black}{%
\special{pn 8}%
\special{ar 1400 2400 200 200 1.5707963 4.7123890}%
}}%
% CIRCLE 2 0 3 0 Black White  
% 4 1400 1600 1400 1400 1400 1400 1400 1800
% 
{\color[named]{Black}{%
\special{pn 8}%
\special{ar 1400 1600 200 200 1.5707963 4.7123890}%
}}%
% CIRCLE 2 0 3 0 Black White  
% 4 4800 2000 4800 1800 4800 2200 4800 1800
% 
{\color[named]{Black}{%
\special{pn 8}%
\special{ar 4800 2000 200 200 4.7123890 1.5707963}%
}}%
% CIRCLE 2 0 3 0 Black White  
% 4 4800 2000 4800 1400 4800 2600 4800 1400
% 
{\color[named]{Black}{%
\special{pn 8}%
\special{ar 4800 2000 600 600 4.7123890 1.5707963}%
}}%
% LINE 2 0 3 0 Black White  
% 6 1800 1800 1950 1950 2200 2200 2050 2050 1800 2200 2200 1800
% 
{\color[named]{Black}{%
\special{pn 8}%
\special{pa 1800 1800}%
\special{pa 1950 1950}%
\special{fp}%
\special{pa 2200 2200}%
\special{pa 2050 2050}%
\special{fp}%
\special{pa 1800 2200}%
\special{pa 2200 1800}%
\special{fp}%
}}%
% LINE 2 0 3 0 Black White  
% 6 1400 1800 1550 1950 1800 2200 1650 2050 1400 2200 1800 1800
% 
{\color[named]{Black}{%
\special{pn 8}%
\special{pa 1400 1800}%
\special{pa 1550 1950}%
\special{fp}%
\special{pa 1800 2200}%
\special{pa 1650 2050}%
\special{fp}%
\special{pa 1400 2200}%
\special{pa 1800 1800}%
\special{fp}%
}}%
% LINE 2 0 3 0 Black White  
% 6 3400 1800 3550 1950 3800 2200 3650 2050 3400 2200 3800 1800
% 
{\color[named]{Black}{%
\special{pn 8}%
\special{pa 3400 1800}%
\special{pa 3550 1950}%
\special{fp}%
\special{pa 3800 2200}%
\special{pa 3650 2050}%
\special{fp}%
\special{pa 3400 2200}%
\special{pa 3800 1800}%
\special{fp}%
}}%
% LINE 2 0 3 0 Black White  
% 6 3000 1800 3150 1950 3400 2200 3250 2050 3000 2200 3400 1800
% 
{\color[named]{Black}{%
\special{pn 8}%
\special{pa 3000 1800}%
\special{pa 3150 1950}%
\special{fp}%
\special{pa 3400 2200}%
\special{pa 3250 2050}%
\special{fp}%
\special{pa 3000 2200}%
\special{pa 3400 1800}%
\special{fp}%
}}%
% LINE 2 0 3 0 Black White  
% 2 4000 1400 4400 1800
% 
{\color[named]{Black}{%
\special{pn 8}%
\special{pa 4000 1400}%
\special{pa 4400 1800}%
\special{fp}%
}}%
% LINE 2 0 3 0 Black White  
% 2 4250 1550 4400 1400
% 
{\color[named]{Black}{%
\special{pn 8}%
\special{pa 4250 1550}%
\special{pa 4400 1400}%
\special{fp}%
}}%
% LINE 2 0 3 0 Black White  
% 2 4000 1800 4150 1650
% 
{\color[named]{Black}{%
\special{pn 8}%
\special{pa 4000 1800}%
\special{pa 4150 1650}%
\special{fp}%
}}%
% LINE 2 0 3 0 Black White  
% 2 4400 1400 4800 1800
% 
{\color[named]{Black}{%
\special{pn 8}%
\special{pa 4400 1400}%
\special{pa 4800 1800}%
\special{fp}%
}}%
% LINE 2 0 3 0 Black White  
% 2 4650 1550 4800 1400
% 
{\color[named]{Black}{%
\special{pn 8}%
\special{pa 4650 1550}%
\special{pa 4800 1400}%
\special{fp}%
}}%
% LINE 2 0 3 0 Black White  
% 2 4400 1800 4550 1650
% 
{\color[named]{Black}{%
\special{pn 8}%
\special{pa 4400 1800}%
\special{pa 4550 1650}%
\special{fp}%
}}%
\end{picture}}%
	\caption{The link diagram of the two-bridge knot $\tb(7/19)$.}
	\label{fig:example_of_two-brigde_knot}
\end{figure}
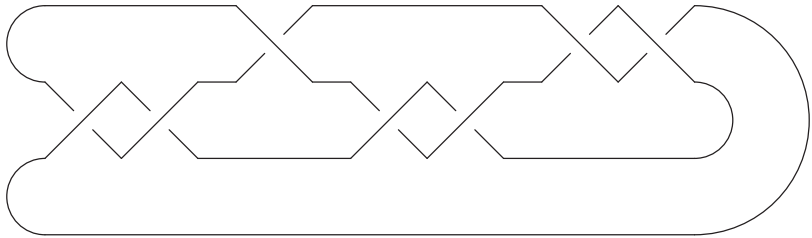
\end{ex}

\subsection{Orientation}
We say that a link is \emph{oriented} if each component of the link has been given an orientation. For an oriented link, we define a sign on each crossing as
\begin{center}
\raisebox{5pt}{a sign on }
\begin{xy}
(0,0)="A",(5,0)="B",(5,5)="C",(0,5)="D",
\ar "A";"C"
\ar "B";"D"|!{"A";"C"}\hole
\end{xy}
\raisebox{5pt}{ is $+1$ and a sign on }
\begin{xy}
	(0,0)="A",(5,0)="B",(5,5)="C",(0,5)="D",
	\ar "B";"D"
	\ar "A";"C"|!{"B";"D"}\hole
\end{xy}
\raisebox{5pt}{ is $-1$. }
\end{center}

We fix orientations of two-bridge links throughout this paper as follows: the orientation on the leftmost crossing in the $2$-strand braid corresponding to $-a_1$ is
\begin{xy}
	(0,0)="A",(5,0)="B",(5,5)="C",(0,5)="D",
	\ar "A";"C"
	\ar@{-} "B";"D"|!{"A";"C"}\hole
\end{xy}
if the two-bridge link is a knot and 
\begin{xy}
	(0,0)="A",(5,0)="B",(5,5)="C",(0,5)="D",
	\ar "A";"C"
	\ar "B";"D"|!{"A";"C"}\hole
\end{xy}
if the two-bridge link is a $2$-component link. From now on, $\tb(\cf{n}{})$ and $\tb(p/q)$ denote the two-bridge link oriented as above. The crossings in each $2$-strand braid have the same sign. We define the sign of $2$-strand braid to be the sign of a crossing in it. 

\begin{ex}
The oriented two-bridge link $\tb(7/19)$ is shown in Figure \ref{fig:example_of_oriented_two-brigde_knot}.

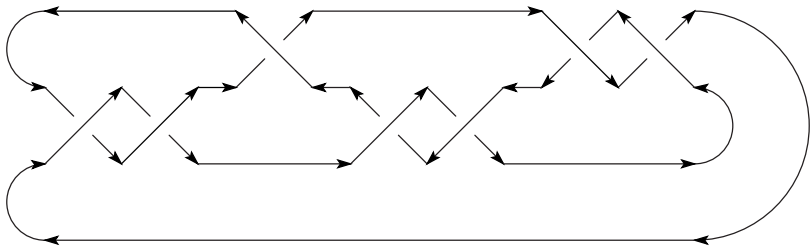
\begin{figure}
	\centering
	%WinTpicVersion4.32a
{\unitlength 0.1in%
\begin{picture}(42.0000,12.9300)(12.0000,-26.2700)%
% LINE 2 0 3 0 Black White  
% 2 2400 1800 2550 1650
% 
{\color[named]{Black}{%
\special{pn 8}%
\special{pa 2400 1800}%
\special{pa 2550 1650}%
\special{fp}%
}}%
% CIRCLE 2 0 3 0 Black White  
% 4 1400 2400 1400 2600 1400 2200 1400 2600
% 
{\color[named]{Black}{%
\special{pn 8}%
\special{ar 1400 2400 200 200 1.5707963 4.7123890}%
}}%
% SARROW 2 0 3 1 Black White  
% 2 1385 2201 1400 2200
% 
{\color[named]{Black}{%
\special{pn 8}%
\special{pa 1385 2201}%
\special{pa 1400 2200}%
\special{fp}%
\special{sh 1}%
\special{pa 1400 2200}%
\special{pa 1332 2184}%
\special{pa 1347 2204}%
\special{pa 1335 2224}%
\special{pa 1400 2200}%
\special{fp}%
}}%
% CIRCLE 2 0 3 0 Black White  
% 4 1400 1600 1400 1400 1400 1400 1400 1800
% 
{\color[named]{Black}{%
\special{pn 8}%
\special{ar 1400 1600 200 200 1.5707963 4.7123890}%
}}%
% SARROW 2 0 3 1 Black White  
% 2 1385 1799 1400 1800
% 
{\color[named]{Black}{%
\special{pn 8}%
\special{pa 1385 1799}%
\special{pa 1400 1800}%
\special{fp}%
\special{sh 1}%
\special{pa 1400 1800}%
\special{pa 1335 1776}%
\special{pa 1347 1796}%
\special{pa 1332 1816}%
\special{pa 1400 1800}%
\special{fp}%
}}%
% CIRCLE 2 0 3 0 Black White  
% 4 4800 2000 4800 1800 4800 2200 4800 1800
% 
{\color[named]{Black}{%
\special{pn 8}%
\special{ar 4800 2000 200 200 4.7123890 1.5707963}%
}}%
% SARROW 2 0 3 1 Black White  
% 2 4815 1801 4800 1800
% 
{\color[named]{Black}{%
\special{pn 8}%
\special{pa 4815 1801}%
\special{pa 4800 1800}%
\special{fp}%
\special{sh 1}%
\special{pa 4800 1800}%
\special{pa 4865 1824}%
\special{pa 4853 1804}%
\special{pa 4868 1784}%
\special{pa 4800 1800}%
\special{fp}%
}}%
% CIRCLE 2 0 3 0 Black White  
% 4 4800 2000 4800 1400 4800 2600 4800 1400
% 
{\color[named]{Black}{%
\special{pn 8}%
\special{ar 4800 2000 600 600 4.7123890 1.5707963}%
}}%
% SARROW 2 0 3 1 Black White  
% 2 4815 2600 4800 2600
% 
{\color[named]{Black}{%
\special{pn 8}%
\special{pa 4815 2600}%
\special{pa 4800 2600}%
\special{fp}%
\special{sh 1}%
\special{pa 4800 2600}%
\special{pa 4867 2620}%
\special{pa 4853 2600}%
\special{pa 4867 2580}%
\special{pa 4800 2600}%
\special{fp}%
}}%
% LINE 2 0 3 0 Black White  
% 2 3400 1800 3550 1950
% 
{\color[named]{Black}{%
\special{pn 8}%
\special{pa 3400 1800}%
\special{pa 3550 1950}%
\special{fp}%
}}%
% LINE 2 0 3 0 Black White  
% 2 3400 2200 3250 2050
% 
{\color[named]{Black}{%
\special{pn 8}%
\special{pa 3400 2200}%
\special{pa 3250 2050}%
\special{fp}%
}}%
% LINE 2 0 3 0 Black White  
% 2 4250 1550 4400 1400
% 
{\color[named]{Black}{%
\special{pn 8}%
\special{pa 4250 1550}%
\special{pa 4400 1400}%
\special{fp}%
}}%
% LINE 2 0 3 0 Black White  
% 2 4400 1800 4550 1650
% 
{\color[named]{Black}{%
\special{pn 8}%
\special{pa 4400 1800}%
\special{pa 4550 1650}%
\special{fp}%
}}%
% LINE 2 0 3 0 Black White  
% 2 1400 1800 1550 1950
% 
{\color[named]{Black}{%
\special{pn 8}%
\special{pa 1400 1800}%
\special{pa 1550 1950}%
\special{fp}%
}}%
% LINE 2 0 3 0 Black White  
% 2 1800 1800 1950 1950
% 
{\color[named]{Black}{%
\special{pn 8}%
\special{pa 1800 1800}%
\special{pa 1950 1950}%
\special{fp}%
}}%
% VECTOR 2 0 3 0 Black White  
% 2 1400 2200 1800 1800
% 
{\color[named]{Black}{%
\special{pn 8}%
\special{pa 1400 2200}%
\special{pa 1800 1800}%
\special{fp}%
\special{sh 1}%
\special{pa 1800 1800}%
\special{pa 1739 1833}%
\special{pa 1762 1838}%
\special{pa 1767 1861}%
\special{pa 1800 1800}%
\special{fp}%
}}%
% VECTOR 2 0 3 0 Black White  
% 2 2050 2050 2200 2200
% 
{\color[named]{Black}{%
\special{pn 8}%
\special{pa 2050 2050}%
\special{pa 2200 2200}%
\special{fp}%
\special{sh 1}%
\special{pa 2200 2200}%
\special{pa 2167 2139}%
\special{pa 2162 2162}%
\special{pa 2139 2167}%
\special{pa 2200 2200}%
\special{fp}%
}}%
% VECTOR 2 0 3 0 Black White  
% 2 2200 2200 3000 2200
% 
{\color[named]{Black}{%
\special{pn 8}%
\special{pa 2200 2200}%
\special{pa 3000 2200}%
\special{fp}%
\special{sh 1}%
\special{pa 3000 2200}%
\special{pa 2933 2180}%
\special{pa 2947 2200}%
\special{pa 2933 2220}%
\special{pa 3000 2200}%
\special{fp}%
}}%
% VECTOR 2 0 3 0 Black White  
% 2 3000 2200 3400 1800
% 
{\color[named]{Black}{%
\special{pn 8}%
\special{pa 3000 2200}%
\special{pa 3400 1800}%
\special{fp}%
\special{sh 1}%
\special{pa 3400 1800}%
\special{pa 3339 1833}%
\special{pa 3362 1838}%
\special{pa 3367 1861}%
\special{pa 3400 1800}%
\special{fp}%
}}%
% VECTOR 2 0 3 0 Black White  
% 2 3650 2050 3800 2200
% 
{\color[named]{Black}{%
\special{pn 8}%
\special{pa 3650 2050}%
\special{pa 3800 2200}%
\special{fp}%
\special{sh 1}%
\special{pa 3800 2200}%
\special{pa 3767 2139}%
\special{pa 3762 2162}%
\special{pa 3739 2167}%
\special{pa 3800 2200}%
\special{fp}%
}}%
% VECTOR 2 0 3 0 Black White  
% 2 3800 2200 4800 2200
% 
{\color[named]{Black}{%
\special{pn 8}%
\special{pa 3800 2200}%
\special{pa 4800 2200}%
\special{fp}%
\special{sh 1}%
\special{pa 4800 2200}%
\special{pa 4733 2180}%
\special{pa 4747 2200}%
\special{pa 4733 2220}%
\special{pa 4800 2200}%
\special{fp}%
}}%
% VECTOR 2 0 3 0 Black White  
% 2 4800 1800 4400 1400
% 
{\color[named]{Black}{%
\special{pn 8}%
\special{pa 4800 1800}%
\special{pa 4400 1400}%
\special{fp}%
\special{sh 1}%
\special{pa 4400 1400}%
\special{pa 4433 1461}%
\special{pa 4438 1438}%
\special{pa 4461 1433}%
\special{pa 4400 1400}%
\special{fp}%
}}%
% VECTOR 2 0 3 0 Black White  
% 2 4150 1650 4000 1800
% 
{\color[named]{Black}{%
\special{pn 8}%
\special{pa 4150 1650}%
\special{pa 4000 1800}%
\special{fp}%
\special{sh 1}%
\special{pa 4000 1800}%
\special{pa 4061 1767}%
\special{pa 4038 1762}%
\special{pa 4033 1739}%
\special{pa 4000 1800}%
\special{fp}%
}}%
% VECTOR 2 0 3 0 Black White  
% 2 4000 1800 3800 1800
% 
{\color[named]{Black}{%
\special{pn 8}%
\special{pa 4000 1800}%
\special{pa 3800 1800}%
\special{fp}%
\special{sh 1}%
\special{pa 3800 1800}%
\special{pa 3867 1820}%
\special{pa 3853 1800}%
\special{pa 3867 1780}%
\special{pa 3800 1800}%
\special{fp}%
}}%
% VECTOR 2 0 3 0 Black White  
% 2 3800 1800 3400 2200
% 
{\color[named]{Black}{%
\special{pn 8}%
\special{pa 3800 1800}%
\special{pa 3400 2200}%
\special{fp}%
\special{sh 1}%
\special{pa 3400 2200}%
\special{pa 3461 2167}%
\special{pa 3438 2162}%
\special{pa 3433 2139}%
\special{pa 3400 2200}%
\special{fp}%
}}%
% VECTOR 2 0 3 0 Black White  
% 2 3150 1950 3000 1800
% 
{\color[named]{Black}{%
\special{pn 8}%
\special{pa 3150 1950}%
\special{pa 3000 1800}%
\special{fp}%
\special{sh 1}%
\special{pa 3000 1800}%
\special{pa 3033 1861}%
\special{pa 3038 1838}%
\special{pa 3061 1833}%
\special{pa 3000 1800}%
\special{fp}%
}}%
% VECTOR 2 0 3 0 Black White  
% 2 3000 1800 2800 1800
% 
{\color[named]{Black}{%
\special{pn 8}%
\special{pa 3000 1800}%
\special{pa 2800 1800}%
\special{fp}%
\special{sh 1}%
\special{pa 2800 1800}%
\special{pa 2867 1820}%
\special{pa 2853 1800}%
\special{pa 2867 1780}%
\special{pa 2800 1800}%
\special{fp}%
}}%
% VECTOR 2 0 3 0 Black White  
% 6 2800 1800 2400 1400 2400 1400 1400 1400 1400 1400 1400 1400
% 
{\color[named]{Black}{%
\special{pn 8}%
\special{pa 2800 1800}%
\special{pa 2400 1400}%
\special{fp}%
\special{sh 1}%
\special{pa 2400 1400}%
\special{pa 2433 1461}%
\special{pa 2438 1438}%
\special{pa 2461 1433}%
\special{pa 2400 1400}%
\special{fp}%
\special{pa 2400 1400}%
\special{pa 1400 1400}%
\special{fp}%
\special{sh 1}%
\special{pa 1400 1400}%
\special{pa 1467 1420}%
\special{pa 1453 1400}%
\special{pa 1467 1380}%
\special{pa 1400 1400}%
\special{fp}%
\special{pa 1400 1400}%
\special{pa 1400 1400}%
\special{fp}%
}}%
% VECTOR 2 0 3 0 Black White  
% 6 1650 2050 1800 2200 1800 2200 2200 1800 2200 1800 2400 1800
% 
{\color[named]{Black}{%
\special{pn 8}%
\special{pa 1650 2050}%
\special{pa 1800 2200}%
\special{fp}%
\special{sh 1}%
\special{pa 1800 2200}%
\special{pa 1767 2139}%
\special{pa 1762 2162}%
\special{pa 1739 2167}%
\special{pa 1800 2200}%
\special{fp}%
\special{pa 1800 2200}%
\special{pa 2200 1800}%
\special{fp}%
\special{sh 1}%
\special{pa 2200 1800}%
\special{pa 2139 1833}%
\special{pa 2162 1838}%
\special{pa 2167 1861}%
\special{pa 2200 1800}%
\special{fp}%
\special{pa 2200 1800}%
\special{pa 2400 1800}%
\special{fp}%
\special{sh 1}%
\special{pa 2400 1800}%
\special{pa 2333 1780}%
\special{pa 2347 1800}%
\special{pa 2333 1820}%
\special{pa 2400 1800}%
\special{fp}%
}}%
% VECTOR 2 0 3 0 Black White  
% 2 2650 1550 2800 1400
% 
{\color[named]{Black}{%
\special{pn 8}%
\special{pa 2650 1550}%
\special{pa 2800 1400}%
\special{fp}%
\special{sh 1}%
\special{pa 2800 1400}%
\special{pa 2739 1433}%
\special{pa 2762 1438}%
\special{pa 2767 1461}%
\special{pa 2800 1400}%
\special{fp}%
}}%
% VECTOR 2 0 3 0 Black White  
% 4 2800 1400 4000 1400 4000 1400 4400 1800
% 
{\color[named]{Black}{%
\special{pn 8}%
\special{pa 2800 1400}%
\special{pa 4000 1400}%
\special{fp}%
\special{sh 1}%
\special{pa 4000 1400}%
\special{pa 3933 1380}%
\special{pa 3947 1400}%
\special{pa 3933 1420}%
\special{pa 4000 1400}%
\special{fp}%
\special{pa 4000 1400}%
\special{pa 4400 1800}%
\special{fp}%
\special{sh 1}%
\special{pa 4400 1800}%
\special{pa 4367 1739}%
\special{pa 4362 1762}%
\special{pa 4339 1767}%
\special{pa 4400 1800}%
\special{fp}%
}}%
% VECTOR 2 0 3 0 Black White  
% 2 4650 1550 4800 1400
% 
{\color[named]{Black}{%
\special{pn 8}%
\special{pa 4650 1550}%
\special{pa 4800 1400}%
\special{fp}%
\special{sh 1}%
\special{pa 4800 1400}%
\special{pa 4739 1433}%
\special{pa 4762 1438}%
\special{pa 4767 1461}%
\special{pa 4800 1400}%
\special{fp}%
}}%
% VECTOR 2 0 3 0 Black White  
% 2 4800 2600 1400 2600
% 
{\color[named]{Black}{%
\special{pn 8}%
\special{pa 4800 2600}%
\special{pa 1400 2600}%
\special{fp}%
\special{sh 1}%
\special{pa 1400 2600}%
\special{pa 1467 2620}%
\special{pa 1453 2600}%
\special{pa 1467 2580}%
\special{pa 1400 2600}%
\special{fp}%
}}%
\end{picture}}%
	\caption{The link diagram of the oriented two-bridge knot $\tb(7/19)$. The sign of $2$-strand braid corresponding to $-a_1$ is $-1$, $a_2$ is $-1$, $-a_3$ is $+1$ and $a_4$ is $-1$}
	\label{fig:example_of_oriented_two-brigde_knot}
\end{figure}
\end{ex}

%We define the \emph{Seifert path} which is a path in $\AT{p/q}$. We write $a/b \equiv c/d \mod 2$ if $a \equiv c \mod 2$ and $b \equiv d \mod 2$. 
%
%\begin{defi}
%We take the $\mod 2$ values of $r$ and $s$ in all labels $r/s$ of $\AT{p/q}$. Then, three vertices of each triangle are labeled $0/1$, $1/1$, and $1/0$. A path of $\AT{p/q}$ is called \emph{Seifert path} if following conditions hold:
%\begin{enumerate}
%\item The path does not go along two sides of the same triangle.
%\item If $p/q \equiv 1/1 \mod 2$, the path use only edges whose one endpoint is $1/1$ and the other is $1/0$. Otherwise the path use only edges whose one endpoint is $0/1$ and the other is $1/0$.
%\end{enumerate}
%\end{defi} 

We will show the theorem to compute signs of $2$-strand braids using the corresponding ancestral triangle. To state the theorem, we define the \emph{Seifert path} which uniquely exists in each ancestral triangle. We write $a/b \equiv c/d \mod 2$ if $a \equiv c \mod 2$ and $b \equiv d \mod 2$. We take the $\mod 2$ values in all labels $r/s$ of $\AT{p/q}$. Therefore three vertices of each triangle are labeled $0/1$, $1/1$, and $1/0$. The Seifert path is a path in $\AT{p/q}$ which satisfies following conditions:
\begin{enumerate}
\item It does not go along two sides of the same triangle.
\item If $p/q \equiv 1/1 \mod 2$, then the path use only edges whose one endpoint is $1/1$ and the other is $1/0$. Otherwise the path use only edges whose one endpoint is $0/1$ and the other is $1/0$.
\end{enumerate}
It is clear that all triangles in $\fan_i$ are on the right side or the left side of the Seifert path because of the condition $(1)$.
\begin{rem}
In \cite{HO}, paths satisfying condition (1) are called edgepaths. Hatcher and Oertel showed that edgepaths correspond to essential surfaces of $\tb(p/q)$. The Seifert path corresponds to Seifert surface.
\end{rem}
%\begin{rem}
%\label{rem:fan_seifert_path}
%According to the condition $(1)$, all triangles in $\fan_i$ are on the right side or the left side of Seifert path.
%\end{rem}

\begin{thm}
\label{thm:sign_of_crossing}
For $\AT{p/q}$, we define $t_i$ by following recursion:
\[
\begin{array}{l}

t_1 = \begin{cases}
+1 & \text{if $p/q \equiv 1/0 \text{ or } 0/1 \mod 2$;}\\
-1 & \text{if $p/q \equiv 1/1 \mod 2$.}
\end{cases}\\
t_{i} = \begin{cases}
-t_{i-1} & \text{if the bottom edge of $\fan_i$ is in the Seifert path;}\\
t_{i-1} & \text{otherwise.}
\end{cases}
\end{array}
\]
Then, the sign of $i$-th $2$-strand braid in $\tb(p/q)$ is $t_i$.
\end{thm}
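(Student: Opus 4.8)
The plan is to walk along the standard diagram of $\tb(p/q)$ from its first $2$-strand braid to its last, keeping track of how the fixed orientation flows through each braid, and to match this flow with the combinatorics of the Seifert path by induction on $i$. The first reduction is that inside a single $2$-strand braid the two through-strands are fixed, so reversing the orientation of one of them reverses the sign of every crossing of that braid; hence the sign of the $i$-th braid factors as $\epsilon_i\eta_i$, where $\epsilon_i\in\{\pm1\}$ depends only on $i$ (through the handedness of the $i$-th box and the writhe convention of Figure~\ref{fig:two-brigde_knot}) and $\eta_i\in\{\pm1\}$ records whether the two strands running through the $i$-th box are coherently oriented. Proving Theorem~\ref{thm:sign_of_crossing} then reduces to two points: (i) $\epsilon_1\eta_1=t_1$; and (ii) $\epsilon_i\eta_i=-\epsilon_{i-1}\eta_{i-1}$ precisely when the bottom edge of $\fan_i$ lies on the Seifert path.

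For the base case $i=1$: by Proposition~\ref{prop:ancestral_triangle}(1) the top vertex of $\AT{p/q}$ is labeled $p/q$, so its reduction modulo $2$ is one of $0/1$, $1/0$, $1/1$; by Proposition~\ref{prop:two-bridge_knot_prop}(2) this residue also records whether $\tb(p/q)$ is a knot ($q$ odd, i.e.\ residue $0/1$ or $1/1$) or a $2$-component link ($q$ even, i.e.\ residue $1/0$), which is exactly the dichotomy that fixes the orientation of the leftmost crossing of the $-a_1$-box. Inspecting the three residues directly against Figures~\ref{fig:two-brigde_knot}--\ref{fig:example_of_oriented_two-brigde_knot} then gives $\epsilon_1\eta_1=t_1$.

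For the inductive step, passing from the $(i-1)$-st braid to the $i$-th braid corresponds to passing from $\fan_{i-1}$ to $\fan_i$ across their common bottom edge $e$, and both the change $\eta_i\eta_{i-1}^{-1}$ in relative orientation and the change $\epsilon_i\epsilon_{i-1}^{-1}$ in box sign are governed by the local data: the handedness of $\fan_{i-1}$ and $\fan_i$ (the four cases right/right, right/left, left/right, left/left) together with the labels modulo $2$ at the two endpoints of $e$ --- and since modulo $2$ every triangle carries exactly $0/1$, $1/1$, $1/0$, these already determine all relevant residues. Only finitely many local types occur, and in each one I would (a) read off $\epsilon_i\epsilon_{i-1}^{-1}$ from the handedness, (b) compute $\eta_i\eta_{i-1}^{-1}$ from the way the two through-strands are rerouted between the two braids, and (c) decide whether the Seifert path runs along $e$ --- which is again forced by the same local data, since the Seifert path uses only one distinguished type of edge and, by its first defining condition, all triangles of a given $\fan_j$ lie on one side of it. The induction closes upon checking that, in every local type, $\epsilon_i\eta_i=-\epsilon_{i-1}\eta_{i-1}$ holds if and only if $e$ lies on the Seifert path.

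The main obstacle is the bookkeeping in (b) and (c): establishing the precise rerouting rule that links the relative orientation of the two strands in the $i$-th braid to the modulo-$2$ labels at the bottom edge of $\fan_i$ under each change of handedness, and simultaneously confirming that the Seifert path genuinely traverses (rather than merely being eligible to traverse) the relevant edge. A secondary subtlety is pinning down the convention-dependent constants $\epsilon_i$ across the three parity classes of $p/q$ and the knot-versus-link distinction. Conceptually the whole statement is controlled by the identification --- implicit in the Hatcher--Oertel remark --- of the Seifert path with the orientation-coherent (Seifert) smoothing of the diagram; the work lies in making the sign conventions line up.
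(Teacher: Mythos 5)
Your proposal takes a genuinely different route from the paper's. The paper first reduces to $a_i\in\{1,2\}$ by crossing changes (which alter neither side of the identity), then inducts on the length $n$ of the continued fraction, comparing $\tb([a_1,\dots,a_{n-1},2])$ with $\tb([a_1,\dots,a_{n-2}])$ and reading the signs of the two new $2$-strand braids off explicit figures; crucially, it proves in parallel the auxiliary formula (\ref{eqn:sign_formula}), which records not merely the sign $t_n$ but which orientation configuration occurs at the rightmost crossing, in terms of which vertex of the top triangle is $\equiv 1/0$ modulo $2$. Your factorization of the $i$-th braid sign as (box handedness)$\times$(orientation coherence) is correct, and an induction over the boxes of a single fixed diagram would, if completed, be cleaner: it avoids both the crossing-change normalization and the comparison of different links.

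There is, however, a concrete gap in the inductive step as you have set it up. Knowing $\eta_{i-1}$ (whether the two strands of the $(i-1)$-st box are coherently oriented) together with the handedness data does not determine $\eta_i$: in the $4$-plat diagram of Figure \ref{fig:two-brigde_knot} only one of the two strands entering the $i$-th box exits the $(i-1)$-st box; the other was a passive strand at the previous stage, and its orientation is global information not recoverable from $\eta_{i-1}$. You do list ``the labels modulo $2$ at the two endpoints of $e$'' among your local inputs, but the assertion that these labels encode the actual orientation configuration of the strands is precisely the substantive content of the theorem --- it cannot be assumed as input to the local computation; it must itself be carried and verified by the induction. Concretely, you need to strengthen the inductive hypothesis from ``$\epsilon_{i-1}\eta_{i-1}=t_{i-1}$'' to a statement recording the orientations of all four strand-levels at the slice between boxes $i-1$ and $i$ (up to overall reversal) and identifying this configuration with the modulo-$2$ labels of the bottom edge of $\fan_i$; this is exactly the role formula (\ref{eqn:sign_formula}) plays in the paper. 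You gesture at this under ``the precise rerouting rule,'' but it is not bookkeeping within your stated induction --- it is an additional simultaneous induction. With that strengthening in place, your finite local case analysis should close.
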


\begin{proof}
Throughout the proof, $t_{i}^{(p/q)}$ denotes $t_i$ determined according to $\AT{p/q}$ and $\epsilon_{i}^{(p/q)}$ denotes the sign of $i$-th $2$-strand braid in $\tb(p/q)$. 

Assume that $p/q = \cf{n}{}$ and $a_i \ge 3$ for some $i$. Let $r/s = [a_1,\dots,a_{i-1},a_i - 2, a_{i+1},\dots,a_n]$. Both of $\fan_k$ of $\AT{p/q}$ and $\fan_k$ of $\AT{r/s}$ are on the left side or right side of each Seifert path for all $k$ because of the condition $(1)$ of the Seifert path. Thus we have $t_k^{(p/q)} = t_k^{(r/s)}$ for all $k$. On the other hand, since $\tb(r/s)$ is obtained by crossing change at the rightmost crossing in $i$-th $2$-strand braid of $\tb(p/q)$, we have $\epsilon_k^{(p/q)} = \epsilon_k^{(r/s)}$. Therefore it is sufficient to prove the theorem when  $a_i = 1 \text{ or } 2$ for all $i$.

At first we prove the theorem for $p/q \equiv 1/1 \text{ or } 0/1$ by induction on the length of continued fraction expansion. We also prove the following formula:
\begin{equation}
\label{eqn:sign_formula}
t_n = \begin{cases}
1 & \text{if the left vertex of the top triangle is labeled $1/0$ modulo $2$;}\\
-1 & \text{if the right vertex of the top triangle is labeled $1/0$ modulo $2$.}
\end{cases}
\end{equation}
If $n = 1$, then $a_1 = 1$ because $[a_1] = 1/a_1 \equiv 1/1 \mod 2$ and we have $t_1^{(1/1)} = -1$. Since $\tb(1/1)$ is a knot, we have $\epsilon_i^{(1/1)} = -1$ by definition. 

If $n = 2$, then since $[a_1,a_2] = a_2/(a_1a_2 + 1) \equiv 1/1 \text{ or } 0/1 \mod 2$, at least one of $a_1$ and $a_2$ is even. Therefore we need only consider three cases: $p/q = [2,2] \text{, } [2,1] \text{, or } [1,2]$. In the case when $p/q = [2,1]$, it is proved above because $[2,1] = [3]$. If $p/q = [2,2] \text{ or } [1,2]$, then $\AT{p/q}$ and $\tb(p/q)$ is shown in Figure \ref{fig:proof_of_sign_l2}. According to that, we have $t_1^{([2,2])} = -1$, $t_2^{([2,2])} = +1$, $t_1^{([1,2])} = t_2^{([1,2])} = -1$, $\epsilon_1^{([2,2])} = -1$, $\epsilon_2^{([2,2])} = +1$, and $\epsilon_1^{([1,2])} = \epsilon_2^{([1,2])} = -1$. Therefore the theorem and the formula hold.

\begin{figure}
	\centering
	\scalebox{0.7}{\input{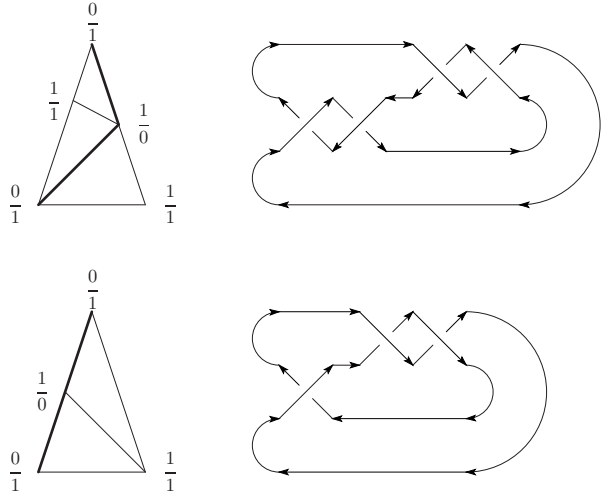}}
	\caption{$\AT{p/q}$ and $\tb(p/q)$; $p/q = [2,2]$ on the top; $p/q = [1,2]$ on the bottom; Each label is replaced by its modulo $2$. The bold line in each $\AT{\alpha}$ represents the Seifert path.}
	\label{fig:proof_of_sign_l2}
\end{figure}

Now suppose that $n > 2$. Since $\cf{n}{-1, 1} = \cf{n}$, we need only consider the case when $a_n = 2$. Let $\alpha = \cf{n}{}$, $\beta = \cf{n-1}{}$, and $\gamma = \cf{n-2}{}$. Then, we have $\alpha \equiv \gamma \mod 2$ and $\alpha \not\equiv \beta \mod 2$.

If $n$ is odd and $\beta \equiv 1/0$, then $\AT{\alpha}$, $\tb(\gamma)$, and $\tb(\alpha)$ are shown in Figure \ref{fig:proof_of_sign_odd_10}. The Seifert path of $\AT{\alpha}$ is obtained by adding two edge $(\alpha,\beta)$ and $(\beta,\gamma)$ to the Seifert path of $\AT{\gamma}$. Thus we have $t_k^{(\alpha)} = t_k^{(\gamma)}$ for $1 \le k \le n-2$ and
\[
\begin{array}{cl}
t_{n-2}^{(\alpha)} = t_{n-1}^{(\alpha)} = -t_n^{(\alpha)} & \text{if $a_{n-1}$ is odd;}\\
t_{n-2}^{(\alpha)} = -t_{n-1}^{(\alpha)} = t_n^{(\alpha)} & \text{if $a_{n-1}$ is even.}
\end{array}
\]
Since $t_{n-2}^{(\gamma)} = t_{n-2}^{(\alpha)}$ is $-1$ if $a_{n-1}$ is odd and $+1$ if $a_{n-1}$ is even by (\ref{eqn:sign_formula}), we have
\[
\begin{array}{cccl}
t_{n-2}^{(\alpha)} = -1, & t_{n-1}^{(\alpha)} = -1, & t_n^{(\alpha)} = +1 & \text{if $a_{n-1}$ is odd;}\\
t_{n-2}^{(\alpha)} = +1, & t_{n-1}^{(\alpha)} = -1, & t_n^{(\alpha)} = +1 & \text{if $a_{n-1}$ is even}
\end{array}
\]
and (\ref{eqn:sign_formula}) holds because the vertex labeled $1/0$ is on the left of the top triangle of $\AT{\alpha}$. On the other hand, by induction and (\ref{eqn:sign_formula}), the sign of the rightmost crossing of $\AT{\gamma}$ is $-1$ if $a_{n-1}$ is odd and $+1$ if $a_{n-1}$ is even. Then, the rightmost crossing is
\begin{xy}
	(0,0)="A",(5,0)="B",(5,5)="C",(0,5)="D",
	\ar "A";"C"
	\ar "D";"B"|!{"A";"C"}\hole
\end{xy}
 or 
\begin{xy}
 	(0,0)="A",(5,0)="B",(5,5)="C",(0,5)="D",
 	\ar "C";"A"
 	\ar "B";"D"|!{"A";"C"}\hole
\end{xy} 
if $a_{n-1}$ is odd and 
\begin{xy}
	(0,0)="A",(5,0)="B",(5,5)="C",(0,5)="D",
	\ar "A";"C"
	\ar "B";"D"|!{"A";"C"}\hole
\end{xy}
 or
\begin{xy}
	(0,0)="A",(5,0)="B",(5,5)="C",(0,5)="D",
	\ar "C";"A"
	\ar "D";"B"|!{"A";"C"}\hole
\end{xy}
if $a_{n-1}$ is even. Since the sign of each crossing does not depend on orientation, reversing orientation if necessary, without loss of generality we can assume the rightmost crossing is
\begin{xy}
	(0,0)="A",(5,0)="B",(5,5)="C",(0,5)="D",
	\ar "A";"C"
	\ar "D";"B"|!{"A";"C"}\hole
\end{xy}
if $a_{n-1}$ is odd and 
\begin{xy}
	(0,0)="A",(5,0)="B",(5,5)="C",(0,5)="D",
	\ar "C";"A"
	\ar "D";"B"|!{"A";"C"}\hole
\end{xy}
if $a_{n-1}$ is even. According to Figure \ref{fig:proof_of_sign_odd_10}, the common parts of $\tb(\gamma)$ and $\tb(\alpha)$ have same orientation. Thus, we have $\epsilon_k^{(\alpha)} = \epsilon_k^{(\gamma)}$ for $1 \le k \le n-2$ and 
\[
\begin{array}{ccl}
\epsilon_{n-1}^{(\alpha)} = -1 & \epsilon_n^{(\alpha)} = +1 & \text{if $a_{n-1}$ is odd;}\\
\epsilon_{n-1}^{(\alpha)} = -1 & \epsilon_n^{(\alpha)} = +1 & \text{if $a_{n-1}$ is even.}
\end{array}
\]
Therefore the theorem holds.
\begin{figure}
	\centering
	\scalebox{0.7}{\input{proof_of_sign_odd_10.tex}}
	\caption{$\AT{\alpha}$, $\tb(\gamma)$, and $\tb(\alpha)$ when $n$ is odd and $\beta \equiv 1/0$; $a_{n-1}$ is odd on the top; $a_{n-1}$ is even on the bottom; The bold line in each $\AT{\alpha}$ is a part of the Seifert path.}
	\label{fig:proof_of_sign_odd_10}
\end{figure}

If $n$ is odd and $\beta \not\equiv 1/0$, then $\AT{\alpha}$, $\tb(\gamma)$, and $\tb(\alpha)$ are shown in Figure \ref{fig:proof_of_sign_odd_not10}. In the same manner we can see
\[
\begin{array}{cccl}
t_{n-2}^{(\gamma)} = t_{n-2}^{(\alpha)} = +1, & t_{n-1}^{(\alpha)} = -1, & t_n^{(\alpha)} = -1 & \text{if $a_{n-1}$ is odd;}\\
t_{n-2}^{(\gamma)} = t_{n-2}^{(\alpha)} = -1, & t_{n-1}^{(\alpha)} = -1, & t_n^{(\alpha)} = -1 & \text{if $a_{n-1}$ is even,}\\
&&&\\
\epsilon_{n-2}^{(\gamma)} = \epsilon_{n-2}^{(\alpha)} = +1 & \epsilon_{n-1}^{(\alpha)} = -1 & \epsilon_n^{(\alpha)} = -1 & \text{if $a_{n-1}$ is odd;}\\
\epsilon_{n-2}^{(\gamma)} = \epsilon_{n-1}^{(\alpha)} = -1 & \epsilon_{n-1}^{(\alpha)} = -1 & \epsilon_n^{(\alpha)} = -1 & \text{if $a_{n-1}$ is even.}
\end{array}
\]
\begin{figure}
	\centering
	\scalebox{0.7}{\input{proof_of_sign_odd_not10.tex}}
	\caption{$\AT{\alpha}$, $\tb(\gamma)$, and $\tb(\alpha)$ when $n$ is odd and $\beta \not\equiv 1/0$; $a_{n-1}$ is odd on the top; $a_{n-1}$ is even on the bottom; The bold line in each $\AT{\alpha}$ is a part of the Seifert path.}
	\label{fig:proof_of_sign_odd_not10}
\end{figure}

If $n$ is even and $\beta \equiv 1/0$, then $\AT{\alpha}$, $\tb(\gamma)$, and $\tb(\alpha)$ are shown in Figure \ref{fig:proof_of_sign_even_10}. By the same way as above, we have $t_k^{(\alpha)} = t_k^{(\gamma)}$ for $1 \le k \le n-2$ and
\[
\begin{array}{cccl}
t_{n-2}^{(\gamma)} = t_{n-2}^{(\alpha)} = +1, & t_{n-1}^{(\alpha)} = +1, & t_n^{(\alpha)} = -1 & \text{if $a_{n-1}$ is odd;}\\
t_{n-2}^{(\gamma)} = t_{n-2}^{(\alpha)} = -1, & t_{n-1}^{(\alpha)} = +1, & t_n^{(\alpha)} = -1 & \text{if $a_{n-1}$ is even.}
\end{array}
\]
On the other hand, the sign of the rightmost crossing of $\AT{\gamma}$ is $+1$ if $a_{n-1}$ is odd and $-1$ if $a_{n-1}$ is even by (\ref{eqn:sign_formula}). Without loss of generality we can assume the rightmost crossing is
\begin{xy}
	(0,0)="A",(5,0)="B",(5,5)="C",(0,5)="D",
	\ar "D";"B"
	\ar "C";"A"|!{"D";"B"}\hole
\end{xy}
if $a_{n-1}$ is odd and 
\begin{xy}
	(0,0)="A",(5,0)="B",(5,5)="C",(0,5)="D",
	\ar "D";"B"
	\ar "A";"C"|!{"D";"B"}\hole
\end{xy}
if $a_{n-1}$ is even. According to Figure \ref{fig:proof_of_sign_even_10}, the common parts of $\tb(\gamma)$ and $\tb(\alpha)$ have same orientation. Thus we have $\epsilon_k^{(\alpha)} = \epsilon_k^{(\gamma)}$ for $1 \le k \le n-2$ and 
\[
\begin{array}{ccl}
\epsilon_{n-1}^{(\alpha)} = +1 & \epsilon_n^{(\alpha)} = -1 & \text{if $a_{n-1}$ is odd;}\\
\epsilon_{n-1}^{(\alpha)} = +1 & \epsilon_n^{(\alpha)} = -1 & \text{if $a_{n-1}$ is even.}
\end{array}
\]
Therefore the theorem and (\ref{eqn:sign_formula}) holds.
\begin{figure}
	\centering
	\scalebox{0.7}{\input{proof_of_sign_even_10.tex}}
	\caption{$\AT{\alpha}$, $\tb(\gamma)$, and $\tb(\alpha)$ when $n$ is even and $\beta \equiv 1/0$; $a_{n-1}$ is odd on the top; $a_{n-1}$ is even on the bottom; The bold line in each $\AT{\alpha}$ is a part of the Seifert path.}
	\label{fig:proof_of_sign_even_10}
\end{figure}

If $n$ is even and $\beta \not\equiv 1/0$, then $\AT{\alpha}$, $\tb(\gamma)$, and $\tb(\alpha)$ are shown in Figure \ref{fig:proof_of_sign_even_not10}. In the same manner we can show that
\[
\begin{array}{cccl}
t_{n-2}^{(\gamma)} = t_{n-2}^{(\alpha)} = -1, & t_{n-1}^{(\alpha)} = +1, & t_n^{(\alpha)} = +1 & \text{if $a_{n-1}$ is odd;}\\
t_{n-2}^{(\gamma)} = t_{n-2}^{(\alpha)} = +1, & t_{n-1}^{(\alpha)} = +1, & t_n^{(\alpha)} = +1 & \text{if $a_{n-1}$ is even,}\\
&&&\\
\epsilon_{n-2}^{(\gamma)} = \epsilon_{n-2}^{(\alpha)} = -1 & \epsilon_{n-1}^{(\alpha)} = +1 & \epsilon_n^{(\alpha)} = +1 & \text{if $a_{n-1}$ is odd;}\\
\epsilon_{n-2}^{(\gamma)} = \epsilon_{n-1}^{(\alpha)} = +1 & \epsilon_{n-1}^{(\alpha)} = +1 & \epsilon_n^{(\alpha)} = +1 & \text{if $a_{n-1}$ is even,}
\end{array}
\]
and the proof when $p/q \equiv 1/1 \text{ or } 0/1 \mod 2$ is complete.
\begin{figure}
	\centering
	\scalebox{0.7}{\input{proof_of_sign_even_not10.tex}}
	\caption{$\AT{\alpha}$, $\tb(\gamma)$, and $\tb(\alpha)$ when $n$ is even and $\beta \not\equiv 1/0$; $a_{n-1}$ is odd on the top; $a_{n-1}$ is even on the bottom; The bold line in each $\AT{\alpha}$ is a part of the Seifert path.}
	\label{fig:proof_of_sign_even_not10}
\end{figure}

Then, we prove the theorem for $p/q \equiv 1/0 \mod 2$ by induction on the length of continued fraction expansion. If $n=1$, then $p/q = [2]$ and the theorem holds by definition. If $n=2$, then $p/q = [1,1] = [2]$. Now suppose $n > 2$. Since $\cf{n}{-1,1} = \cf{n}{}$, we need only consider the case of $a_n = 2$. Let $\beta = \cf{n-1}{,1}$ and $\gamma = \cf{n-1}{}$. If $\beta \equiv 0/1$, then the Seifert path of $\AT{p/q}$ is obtained by adding an edge $(p/q,\beta)$ to that of $\AT{\beta}$. Thus we have $t_k^{(p/q)} = t_k^{(\beta)}$ for $1 \le k \le n-1$ and $t_n^{(p/q)} = t_{n-1}^{(\beta)}$. On the other hand, by (\ref{eqn:sign_formula}), the rightmost crossing of $\tb(\beta)$ is 
\begin{xy}
	(0,0)="A",(5,0)="B",(5,5)="C",(0,5)="D",
	\ar "A";"C"
	\ar "D";"B"|!{"A";"C"}\hole
\end{xy}
or 
\begin{xy}
	(0,0)="A",(5,0)="B",(5,5)="C",(0,5)="D",
	\ar "C";"A"
	\ar "B";"D"|!{"A";"C"}\hole
\end{xy}
. Since $\tb(p/q)$ is obtained by adding one crossing to the rightmost $2$-strand braid of $\tb(\beta)$, the rightmost crossing of $\tb(p/q)$ is same as that of $\tb(\beta)$. Thus the orientation of $\tb(p/q)$ other than the $n$-th $2$-strand braid is same as that of $\tb(\beta)$ and we have $\epsilon_k^{(p/q)} = \epsilon_k^{(\beta)}$ for $1\le k \le n$. Therefore the theorem holds. If $\beta \not\equiv 1/0$, then we have  $\gamma \equiv 1/0$. The two edges from top of the Seifert path of $\AT{p/q}$ are $(p/q,\gamma)$ and $(\gamma,\cf{n-2}{})$. Thus we have $t_k^{(p/q)} = t_k^{(\gamma)}$ for $1 \le k \le n-1$ and $t_n^{(p/q)} = -t_{n-1}^{(\gamma)}$. On the other hand, according to Figure \ref{fig:proof_of_sign_10}, we have $\epsilon_k^{(p/q)} = \epsilon_k^{(\gamma)}$ for $1\le k \le n-1$ and $\epsilon_k^{(p/q)} = -\epsilon_k^{(\gamma)}$ if $n$ is odd. Similarly, we have same equation if $n$ is even. Therefore the theorem holds.
\begin{figure}
	\centering
	\scalebox{0.7}{\input{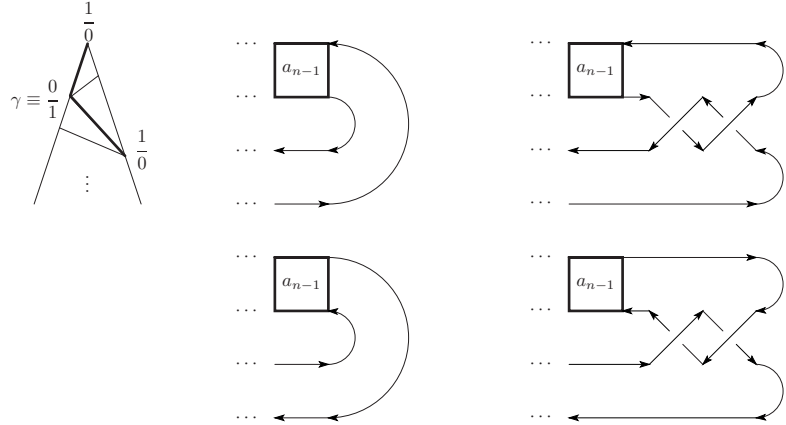}}
	\caption{$\AT{p/q}$, $\tb(\gamma)$, and $\tb(p/q)$ when $n$ is odd. The bold line in $\AT{p/q}$ is a part of the Seifert path.}
	\label{fig:proof_of_sign_10}
\end{figure}
\end{proof}

\begin{cor}
\label{cor:sign_formula}
Let $p/q = \cf{n}{} \equiv 1/1 \text{ or } 0/1 \mod 2$. Then
\[
t_n = \begin{cases}
1 & \text{if the left vertex of the top triangle is labeled $1/0$ modulo $2$;}\\
-1 & \text{if the right vertex of the top triangle is labeled $1/0$ modulo $2$.}
\end{cases}
\]
\end{cor}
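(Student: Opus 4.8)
The plan is to recognize the asserted equation as precisely formula (\ref{eqn:sign_formula}), which was established --- together with, and in the course of proving, Theorem \ref{thm:sign_of_crossing} --- by induction on the length $n$ of a continued fraction expansion; so the corollary's proof is just a matter of isolating that part of the earlier argument. First I would record a well-posedness remark: modulo $2$ the three vertices of every triangle of $\AT{p/q}$ carry the labels $0/1$, $1/1$, $1/0$ in some order, and since the apex of the top triangle is $p/q \equiv 1/1$ or $0/1$, exactly one of its two remaining (left and right) vertices is $\equiv 1/0$, so the dichotomy in the statement is exhaustive and exclusive.

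Next I would run the two reductions used in the proof of Theorem \ref{thm:sign_of_crossing}. If some $a_i \ge 3$, replace $p/q$ by $r/s = [a_1,\dots,a_{i-1},a_i-2,a_{i+1},\dots,a_n]$; passing to $a_i - 2$ changes each convergent by an even amount, so $r/s \equiv p/q \bmod 2$, each fan $\fan_k$ lies on the same side of the respective Seifert path, so $t_n$ is unchanged, and the top triangles of $\AT{r/s}$ and $\AT{p/q}$ coincide with the same vertex labeled $1/0$ modulo $2$. Using $[a_1,\dots,a_n] = [a_1,\dots,a_n-1,1]$ I would further reduce to $a_n = 2$, hence $n \ge 2$; the base cases $n \le 2$ (with $n=1$, $p/q=[1]=1/1$, degenerate, and $n=2$ giving $p/q \in \{[2,2],[1,2]\}$, treated in Figure \ref{fig:proof_of_sign_l2}) are checked by inspection.

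For the inductive step, writing $\alpha = [a_1,\dots,a_n]$, $\beta = [a_1,\dots,a_{n-1}]$, $\gamma = [a_1,\dots,a_{n-2}]$ with $a_n = 2$ (so $\alpha \equiv \gamma$ and $\alpha \not\equiv \beta \bmod 2$), I would split according to the parity of $n$ and whether $\beta \equiv 1/0 \bmod 2$, as in Figures \ref{fig:proof_of_sign_odd_10}, \ref{fig:proof_of_sign_odd_not10}, \ref{fig:proof_of_sign_even_10} and \ref{fig:proof_of_sign_even_not10}. In every case the Seifert path of $\AT{\alpha}$ extends that of $\AT{\gamma}$ by the two edges $(\alpha,\beta)$ and $(\beta,\gamma)$, so $t_k^{(\alpha)} = t_k^{(\gamma)}$ for $k \le n-2$; substituting the inductive value of $t_{n-2}^{(\gamma)}$ --- which the formula applied to $\gamma$ determines from which vertex of $\AT{\gamma}$'s top triangle is $\equiv 1/0$, and hence from the parity of $a_{n-1}$ --- into the recursion defining the $t_i$ pins down $t_{n-1}^{(\alpha)}$ and $t_n^{(\alpha)}$, and one then reads off the left/right position of the $1/0$-labeled vertex of $\AT{\alpha}$'s top triangle and verifies it matches the asserted sign. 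The only real work is this bookkeeping --- tracking how the $1/0$-vertex migrates from left to right, and how the parity of $a_{n-1}$ toggles the $t_i$, as two triangles are stacked on top --- and since all of it was already carried out inside the proof of Theorem \ref{thm:sign_of_crossing}, the cleanest write-up is to cite that computation rather than reproduce the four figures.
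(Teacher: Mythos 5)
Your overall strategy is exactly the paper's: the corollary carries no separate proof there because it is precisely formula (\ref{eqn:sign_formula}), which is established simultaneously with Theorem \ref{thm:sign_of_crossing} by the same induction, and your preliminary remarks (the exhaustiveness of the left/right dichotomy, the invariance of both $t_n$ and the mod-$2$ labels under $a_i \mapsto a_i-2$, the degeneracy of $[1]$) are correct and even slightly more explicit than the paper.

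There is, however, a concrete error in your description of the inductive step. You assert that ``in every case the Seifert path of $\AT{\alpha}$ extends that of $\AT{\gamma}$ by the two edges $(\alpha,\beta)$ and $(\beta,\gamma)$.'' This is true only when $\beta \equiv 1/0 \bmod 2$. When $\beta \not\equiv 1/0$, the edge $(\alpha,\beta)$ has neither endpoint congruent to $1/0$ (since $\alpha \equiv 1/1$ or $0/1$ and $\beta$ is then the remaining class), so condition $(2)$ in the definition of the Seifert path excludes it; the top triangle of $\AT{\alpha}$ is $(\alpha,\beta,\beta+\gamma)$ with $\beta+\gamma \equiv 1/0$, and the path must descend as $\alpha \to \beta+\gamma \to \gamma$. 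This is not a cosmetic point: the recursion for $t_n$ tests whether the bottom edge $(\beta,\gamma)$ of $\fan_n$ lies on the Seifert path, and it does in the first case (giving $t_n = -t_{n-1}$) but not in the second (giving $t_n = t_{n-1}$). Carrying out your bookkeeping literally would therefore produce the wrong sign in the cases of Figures \ref{fig:proof_of_sign_odd_not10} and \ref{fig:proof_of_sign_even_not10}; the whole reason the paper splits into four figures is precisely this dichotomy on $\beta \bmod 2$. Since you ultimately propose to cite the paper's computation rather than redo it, the corollary is still safely obtained, but the summary of what that computation does needs to be corrected as above.
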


\begin{cor}
\label{cor:sign_of_mirror}
If $q$ is odd, then the sign of $i$-th $2$-strand braid of $\tb(p/q)$ and that of $(i+1)$-th $2$-strand braid of $\tb((q-p)/q)$ are different.
\end{cor}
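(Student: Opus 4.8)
The plan is to reduce the corollary, via Theorem~\ref{thm:sign_of_crossing}, to a purely combinatorial statement about ancestral triangles and Seifert paths, and then to prove that statement using the mirror relation of Proposition~\ref{prop:ancestral_triangle}(2). Concretely, take the continued fraction expansion $p/q=[a_1,\dots,a_n]$ with $a_1\ge 2$ (equivalently $0<p/q<1/2$), so that $(q-p)/q=[1,a_1-1,a_2,\dots,a_n]$; then $\tb(p/q)$ has $n$ two-strand braids, $\tb((q-p)/q)$ has $n+1$, and the assertion is the correspondence between the $i$-th braid of $\tb(p/q)$ ($1\le i\le n$) and the $(i+1)$-th braid of $\tb((q-p)/q)$. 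Since Theorem~\ref{thm:sign_of_crossing} identifies the sign of the $i$-th braid with $t_i$, it suffices to prove the combinatorial identity
\[
t_i^{(p/q)}=-\,t_{i+1}^{((q-p)/q)}\qquad(1\le i\le n).
\]

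First I would set up the reflection. Realizing $\AT{(q-p)/q}$ as the mirror image of $\AT{p/q}$, right triangles are interchanged with left triangles, so $\fan_1^{((q-p)/q)}$ is empty and $\fan_i^{(p/q)}$ is carried onto $\fan_{i+1}^{((q-p)/q)}$ for every $i\ge 1$; in particular, for $2\le i\le n$ the bottom edge of $\fan_i^{(p/q)}$ is carried onto the bottom edge of $\fan_{i+1}^{((q-p)/q)}$. On labels modulo $2$ the reflection acts by the map $\sigma$ interchanging $0/1$ and $1/1$ and fixing $1/0$: this is forced because the labels are additive under the rule $(p+r)/(q+s)$ and the base edge of $\AT{p/q}$, whose endpoints are labeled $0/1$ and $1/1$, is sent to the base edge of $\AT{(q-p)/q}$ with those endpoints interchanged (and $\sigma(1/0)=\sigma(0/1)+\sigma(1/1)=1/0$). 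Since $q$ is odd, $p/q$ is congruent to $0/1$ or $1/1$ and $(q-p)/q$ to the other one, so $\sigma$ carries edges of the type occurring in the Seifert path of $\AT{p/q}$ exactly to the edges of the type occurring in the Seifert path of $\AT{(q-p)/q}$; as the reflection also preserves the condition of not traversing two sides of one triangle, it sends the Seifert path of $\AT{p/q}$ to a path satisfying both defining conditions of the Seifert path of $\AT{(q-p)/q}$, hence to that Seifert path by uniqueness.

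With this in hand, the recursions defining $t$ match up. For $2\le i\le n$, the bottom edge of $\fan_i^{(p/q)}$ lies in the Seifert path of $\AT{p/q}$ if and only if the bottom edge of $\fan_{i+1}^{((q-p)/q)}$ lies in the Seifert path of $\AT{(q-p)/q}$, hence $t_i^{(p/q)}t_{i-1}^{(p/q)}=t_{i+1}^{((q-p)/q)}t_i^{((q-p)/q)}$. For the base case $i=1$: reading off the definition, $t_1^{(p/q)}=-t_1^{((q-p)/q)}$ (if $p/q\equiv 0/1$ then $t_1^{(p/q)}=+1$ and $t_1^{((q-p)/q)}=-1$, and symmetrically when $p/q\equiv 1/1$); and since $\fan_1^{((q-p)/q)}$ is empty, the bottom edge of $\fan_2^{((q-p)/q)}$ is the base edge of $\AT{(q-p)/q}$, whose endpoints are labeled $0/1$ and $1/1$, so it is not in the Seifert path and $t_2^{((q-p)/q)}=t_1^{((q-p)/q)}$. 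Thus $t_1^{(p/q)}=-t_2^{((q-p)/q)}$, and an induction on $i$ using the matched flip pattern yields $t_i^{(p/q)}=-t_{i+1}^{((q-p)/q)}$ for all $1\le i\le n$; Theorem~\ref{thm:sign_of_crossing} then translates this into the statement about signs of two-strand braids.

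The hard part will be the middle step: showing that the reflection carries the Seifert path of $\AT{p/q}$ onto that of $\AT{(q-p)/q}$. This requires careful bookkeeping of how the labels modulo $2$ transform under the mirror together with the uniqueness of the Seifert path, and the index shift coming from the empty fan $\fan_1^{((q-p)/q)}$ needs attention. (One could instead start from Proposition~\ref{prop:two-bridge_knot_prop}(1), that $\tb((q-p)/q)$ is the mirror of $\tb(p/q)$ so every crossing sign is reversed, and then compare the box presentations $[a_1,\dots,a_n]$ and $[1,a_1-1,a_2,\dots,a_n]$ of the latter link; but that comparison requires separately checking that the first two boxes of $[1,a_1-1,a_2,\dots,a_n]$ carry a common sign, which is essentially the computation above, so working directly with ancestral triangles is more economical.)
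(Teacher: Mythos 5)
Your proposal is correct and follows essentially the same route as the paper's proof: use the mirror relation between $\AT{p/q}$ and $\AT{(q-p)/q}$ to show the two Seifert paths correspond, deduce that $t_i^{(p/q)}$ and $t_{i+1}^{((q-p)/q)}$ obey the same recursion with opposite initial values, and conclude via Theorem~\ref{thm:sign_of_crossing}. Your write-up simply supplies details the paper leaves implicit (the mod-$2$ action swapping $0/1\leftrightarrow 1/1$ and fixing $1/0$, the uniqueness argument for the Seifert path, and the index shift from the empty fan $\fan_1$ of $\AT{(q-p)/q}$), all of which check out.
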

\begin{proof}
Since Proposition \ref{prop:ancestral_triangle} and the definition of the Seifert path, the Seifert path of $\AT{(q-p)/q}$ is the mirror image of that of $\AT{p/q}$. Therefore $t_i^{(p/q)}$'s and $t_{i+1}^{((q-p)/q)}$'s satisfy the same recursion. Since the initial values are different, $t_i^{(p/q)}$ and $t_{i+1}^{((q-p)/q)}$ are different.
\end{proof}

\begin{ex}
\label{exmp:sign_of_crossing}
\begin{figure}
	\centering
	\scalebox{0.7}{%WinTpicVersion4.32a
{\unitlength 0.1in%
\begin{picture}(17.6000,22.6500)(8.4000,-26.0500)%
% LINE 2 0 3 0 Black White  
% 2 1960 1015 1690 875
% 
{\color[named]{Black}{%
\special{pn 8}%
\special{pa 1960 1015}%
\special{pa 1690 875}%
\special{fp}%
}}%
% LINE 2 0 3 0 Black White  
% 2 1320 1805 2120 1405
% 
{\color[named]{Black}{%
\special{pn 8}%
\special{pa 1320 1805}%
\special{pa 2120 1405}%
\special{fp}%
}}%
% LINE 2 0 3 0 Black White  
% 2 1800 605 1000 2605
% 
{\color[named]{Black}{%
\special{pn 8}%
\special{pa 1800 605}%
\special{pa 1000 2605}%
\special{fp}%
}}%
% LINE 2 0 3 0 Black White  
% 2 2600 2605 1800 605
% 
{\color[named]{Black}{%
\special{pn 8}%
\special{pa 2600 2605}%
\special{pa 1800 605}%
\special{fp}%
}}%
% STR 2 0 3 0 Black White  
% 4 2750 2295 2750 2395 4 0 0 0
% $\cfrac{1}{1}$
\put(27.5000,-23.9500){\makebox(0,0)[rt]{{\color[named]{Black}{$\cfrac{1}{1}$}}}}%
% STR 2 0 3 0 Black White  
% 4 850 2295 850 2395 1 0 0 0
% $\cfrac{0}{1}$
\put(8.5000,-23.9500){\makebox(0,0)[lt]{{\color[named]{Black}{$\cfrac{0}{1}$}}}}%
% STR 2 0 3 0 Black White  
% 4 2520 1995 2520 2095 5 0 0 0
% $\cfrac{1}{2}$
\put(25.2000,-20.9500){\makebox(0,0){{\color[named]{Black}{$\cfrac{1}{2}$}}}}%
% STR 2 0 3 0 Black White  
% 4 1200 1705 1200 1805 5 0 0 0
% $\cfrac{1}{3}$
\put(12.0000,-18.0500){\makebox(0,0){{\color[named]{Black}{$\cfrac{1}{3}$}}}}%
% STR 2 0 3 0 Black White  
% 4 2250 1305 2250 1405 5 0 0 0
% $\cfrac{2}{5}$
\put(22.5000,-14.0500){\makebox(0,0){{\color[named]{Black}{$\cfrac{2}{5}$}}}}%
% STR 2 0 3 0 Black White  
% 4 2070 875 2070 975 5 0 0 0
% $\cfrac{3}{8}$
\put(20.7000,-9.7500){\makebox(0,0){{\color[named]{Black}{$\cfrac{3}{8}$}}}}%
% STR 2 0 3 0 Black White  
% 4 1550 755 1550 855 5 0 0 0
% $\cfrac{4}{11}$
\put(15.5000,-8.5500){\makebox(0,0){{\color[named]{Black}{$\cfrac{4}{11}$}}}}%
% STR 2 0 3 0 Black White  
% 4 1800 305 1800 405 5 0 0 0
% $\cfrac{7}{19}$
\put(18.0000,-4.0500){\makebox(0,0){{\color[named]{Black}{$\cfrac{7}{19}$}}}}%
% LINE 2 0 3 0 Black White  
% 2 1000 2605 2600 2605
% 
{\color[named]{Black}{%
\special{pn 8}%
\special{pa 1000 2605}%
\special{pa 2600 2605}%
\special{fp}%
}}%
% LINE 2 0 3 0 Black White  
% 2 1000 2605 2400 2105
% 
{\color[named]{Black}{%
\special{pn 8}%
\special{pa 1000 2605}%
\special{pa 2400 2105}%
\special{fp}%
}}%
% LINE 2 0 3 0 Black White  
% 2 2400 2105 1330 1795
% 
{\color[named]{Black}{%
\special{pn 8}%
\special{pa 2400 2105}%
\special{pa 1330 1795}%
\special{fp}%
}}%
% LINE 2 0 3 0 Black White  
% 2 1330 1795 1960 995
% 
{\color[named]{Black}{%
\special{pn 8}%
\special{pa 1330 1795}%
\special{pa 1960 995}%
\special{fp}%
}}%
% LINE 0 0 3 0 Black White  
% 8 1800 600 1958 1001 1958 1001 1330 1794 1330 1794 2398 2103 2398 2103 2600 2600
% 
{\color[named]{Black}{%
\special{pn 20}%
\special{pa 1800 600}%
\special{pa 1958 1001}%
\special{fp}%
\special{pa 1958 1001}%
\special{pa 1330 1794}%
\special{fp}%
\special{pa 1330 1794}%
\special{pa 2398 2103}%
\special{fp}%
\special{pa 2398 2103}%
\special{pa 2600 2600}%
\special{fp}%
}}%
\end{picture}}%}
	\caption{$\AT{7/19}$ and its Seifert path (bold line).}
	\label{fig:example_of_Seifert_path}
\end{figure}
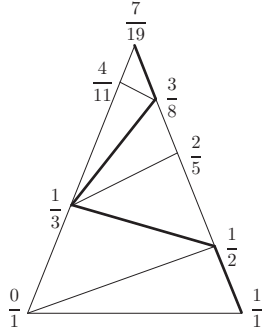
The $\AT{7/19}$ and its Seifert path is shown in Figure \ref{fig:example_of_Seifert_path}. According to that, we have $t_2 = t_1$, $t_3 = -t_2$, and $t_4 = -t_3$. Since $t_1 = -1$ by definition, $t_1 = -1$, $t_2 = -1$, $t_3 = +1$, and $t_4 = -1$. On the other hand, according to Figure \ref{fig:example_of_oriented_two-brigde_knot}, the sign of the first $2$-strand braid of $\tb(7/19)$ is $-1$, the second is $-1$, the third is $+1$, and the fourth is $-1$. 
\end{ex}

\subsection{Alexander polynomials}
 We denote the Alexander polynomial of an oriented link $L$ by $\alexander{L}(t)$. Here we use the following normalization: if $L$ is the unknot, then the Alexander polynomial $\alexander{\text{unknot}}(t)$ is $1$. Alexander polynomials satisfy the following recursion called Skein relation:
\begin{equation}
\alexander{L_+}(t) - \alexander{L_-}(t) = (t^{1/2} - t^{-1/2})\alexander{L_0}(t)
\end{equation}
where $L_+$, $L_-$, and $L_0$ are oriented links which only differ in a ball as Figure \ref{fig:skein_relation}.
\begin{figure}
	\centering
	%WinTpicVersion4.32a
{\unitlength 0.1in%
\begin{picture}(28.0000,4.6000)(10.0000,-18.6000)%
% VECTOR 2 0 3 0 Black White  
% 4 2600 1800 2200 1400 2450 1550 2600 1400
% 
{\color[named]{Black}{%
\special{pn 8}%
\special{pa 2600 1800}%
\special{pa 2200 1400}%
\special{fp}%
\special{sh 1}%
\special{pa 2200 1400}%
\special{pa 2233 1461}%
\special{pa 2238 1438}%
\special{pa 2261 1433}%
\special{pa 2200 1400}%
\special{fp}%
\special{pa 2450 1550}%
\special{pa 2600 1400}%
\special{fp}%
\special{sh 1}%
\special{pa 2600 1400}%
\special{pa 2539 1433}%
\special{pa 2562 1438}%
\special{pa 2567 1461}%
\special{pa 2600 1400}%
\special{fp}%
}}%
% VECTOR 2 0 3 0 Black White  
% 4 1000 1800 1400 1400 1150 1550 1000 1400
% 
{\color[named]{Black}{%
\special{pn 8}%
\special{pa 1000 1800}%
\special{pa 1400 1400}%
\special{fp}%
\special{sh 1}%
\special{pa 1400 1400}%
\special{pa 1339 1433}%
\special{pa 1362 1438}%
\special{pa 1367 1461}%
\special{pa 1400 1400}%
\special{fp}%
\special{pa 1150 1550}%
\special{pa 1000 1400}%
\special{fp}%
\special{sh 1}%
\special{pa 1000 1400}%
\special{pa 1033 1461}%
\special{pa 1038 1438}%
\special{pa 1061 1433}%
\special{pa 1000 1400}%
\special{fp}%
}}%
% LINE 2 0 3 0 Black White  
% 4 1400 1800 1250 1650 2200 1800 2350 1650
% 
{\color[named]{Black}{%
\special{pn 8}%
\special{pa 1400 1800}%
\special{pa 1250 1650}%
\special{fp}%
\special{pa 2200 1800}%
\special{pa 2350 1650}%
\special{fp}%
}}%
% SPLINE 2 0 3 0 Black White  
% 4 3400 1800 3500 1600 3400 1400 3400 1400
% 
{\color[named]{Black}{%
\special{pn 8}%
\special{pa 3400 1800}%
\special{pa 3421 1771}%
\special{pa 3442 1743}%
\special{pa 3460 1714}%
\special{pa 3476 1686}%
\special{pa 3489 1657}%
\special{pa 3497 1628}%
\special{pa 3500 1600}%
\special{pa 3497 1571}%
\special{pa 3489 1542}%
\special{pa 3476 1514}%
\special{pa 3460 1485}%
\special{pa 3441 1457}%
\special{pa 3421 1428}%
\special{pa 3400 1400}%
\special{fp}%
}}%
% SARROW 2 0 3 1 Black White  
% 2 3421 1428 3400 1400
% 
{\color[named]{Black}{%
\special{pn 8}%
\special{pa 3421 1428}%
\special{pa 3400 1400}%
\special{fp}%
\special{sh 1}%
\special{pa 3400 1400}%
\special{pa 3424 1465}%
\special{pa 3432 1443}%
\special{pa 3456 1441}%
\special{pa 3400 1400}%
\special{fp}%
}}%
% SPLINE 2 0 3 0 Black White  
% 4 3800 1800 3700 1600 3800 1400 3800 1400
% 
{\color[named]{Black}{%
\special{pn 8}%
\special{pa 3800 1800}%
\special{pa 3779 1771}%
\special{pa 3758 1743}%
\special{pa 3740 1714}%
\special{pa 3724 1686}%
\special{pa 3711 1657}%
\special{pa 3703 1628}%
\special{pa 3700 1600}%
\special{pa 3703 1571}%
\special{pa 3711 1542}%
\special{pa 3724 1514}%
\special{pa 3740 1485}%
\special{pa 3759 1457}%
\special{pa 3779 1428}%
\special{pa 3800 1400}%
\special{fp}%
}}%
% SARROW 2 0 3 1 Black White  
% 2 3779 1428 3800 1400
% 
{\color[named]{Black}{%
\special{pn 8}%
\special{pa 3779 1428}%
\special{pa 3800 1400}%
\special{fp}%
\special{sh 1}%
\special{pa 3800 1400}%
\special{pa 3744 1441}%
\special{pa 3768 1443}%
\special{pa 3776 1465}%
\special{pa 3800 1400}%
\special{fp}%
}}%
% STR 2 0 3 0 Black White  
% 4 1205 1825 1205 1925 5 0 0 0
% $L_+$
\put(12.0500,-19.2500){\makebox(0,0){{\color[named]{Black}{$L_+$}}}}%
% STR 2 0 3 0 Black White  
% 4 2405 1825 2405 1925 5 0 0 0
% $L_-$
\put(24.0500,-19.2500){\makebox(0,0){{\color[named]{Black}{$L_-$}}}}%
% STR 2 0 3 0 Black White  
% 4 3605 1825 3605 1925 5 0 0 0
% $L_0$
\put(36.0500,-19.2500){\makebox(0,0){{\color[named]{Black}{$L_0$}}}}%
\end{picture}}%
	\caption{Skein diagrams}
	\label{fig:skein_relation}
\end{figure}
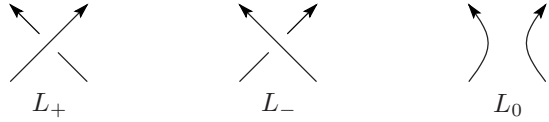

For a knot $K$, it is known that Alexander polynomial is symmetric, namely $\alexander{K}(t) = \alexander{K}(t^{-1})$.

\begin{ex}
\label{exmp:alexander_polynomial}
\begin{figure}
	\centering
	\scalebox{0.8}{\input{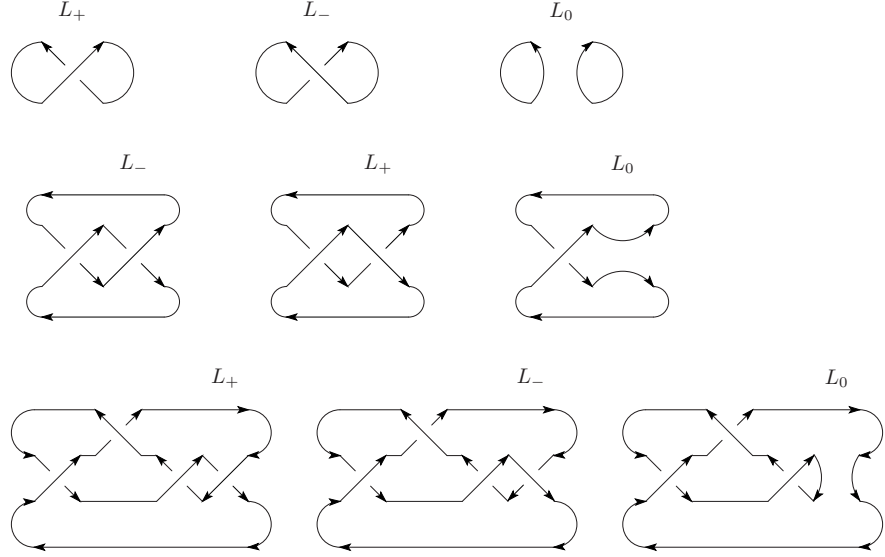}}
	\caption{Skein diagrams for Example \ref{exmp:alexander_polynomial}.}
	\label{fig:example_of_alexander_polynomial}
\end{figure}

Let $\alex{\circ\circ}$ be the Alexander polynomial of two disjoint unknots. Skein diagrams of it is shown on the top of Figure \ref{fig:example_of_alexander_polynomial}. Since both of $L_+$ and $L_-$ are the unknots, we have
\[
\alex{\circ\circ} = (\alex{\text{unknot}} - \alex{\text{unknot}})/(t^{1/2} - t^{-1/2}) = 0.
\]

Let $L$ be the $2$-component link shown in the left on the middle of Figure \ref{fig:example_of_alexander_polynomial}. Since $L_+$ is two disjoint unknots and $L_0$ is the unknot, we have
\[
\alex{L} = \alex{\circ\circ} - (t^{1/2} - t^{-1/2})\alex{\text{unknot}} = t^{-1/2} - t^{-1/2}.
\]
Skein relation for $\tb(3/5)$ is shown on the bottom of Figure \ref{fig:example_of_alexander_polynomial}. Since $L_-$ is the unknot and $L_0$ is $L$, we have
\[
\alex{\tb(3/5)} = \alex{\text{unknot}} + (t^{1/2} - t^{-1/2})\alex{L} = 1 + (t^{1/2} - t^{-1/2})(t^{-1/2} - t^{1/2}) = -t^{-1} + 3 - t
\]
\end{ex}

We denote the Alexander polynomial of $\tb(\cf{n}{})$ by $\alex{\cf{n}{}}$.
\begin{lem}
\label{lem:alexander_recursion}
If $\cf{n}{} \equiv 0/1 \text{ or } 1/0$, then $\alex{\cf{n}{}}$'s satisfy the following recursions with the convention that $\cf{n-1}{,0} = \cf{n-2}{}$, $\alex{[]} = 1$, and $\alex{[0]} = 0$:
\begin{equation}
\label{eqn:alexander_identity}
\alex{\cf{n}{-1,1}} = \alex{\cf{n}{}};
\end{equation}
and if $a_n \ge 2$, then $\alex{\cf{n}{}} = $
\begin{equation}
\begin{cases}
\label{eqn:alexander_recursion}
\alex{[a_1,\dots,a_n-2]} + (t^{1/2} - t^{-1/2})\alex{[a_1,\dots,a_{n-1}]} & \text{if $n$ is odd and $t_n = +1$;}\\
\alex{[a_1,\dots,a_n-2]} - (t^{1/2} - t^{-1/2})\alex{[a_1,\dots,a_n-1]} & \text{if $n$ is odd and $t_n = -1$;}\\
\alex{[a_1,\dots,a_n-2]} + (t^{1/2} - t^{-1/2})\alex{[a_1,\dots,a_n-1]} & \text{if $n$ is even and $t_n = +1$;}\\
\alex{[a_1,\dots,a_n-2]} - (t^{1/2} - t^{-1/2})\alex{{[a_1,\dots,a_{n-1}]}} & \text{if $n$ is even and $t_n = -1$.}
\end{cases}
\end{equation}
\end{lem}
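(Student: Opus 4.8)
The plan is to obtain the recursion (\ref{eqn:alexander_recursion}) from a single application of the Skein relation at a crossing in the $n$-th $2$-strand braid of $\tb(\cf{n}{})$, and to obtain (\ref{eqn:alexander_identity}) from Proposition \ref{prop:two-bridge_knot_prop}(3). For (\ref{eqn:alexander_identity}) I would only note that $\tb(\cf{n}{})$ and $\tb(\cf{n}{-1,1})$ are equivalent links by that proposition, that the equivalence may be taken to fix the leftmost crossing and hence to preserve the orientation fixed in Section \ref{section:alexander_polynomial}, so their Alexander polynomials coincide.

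For (\ref{eqn:alexander_recursion}) I would fix the crossing $c$ of the $n$-th braid adjacent to the right cap and form the Skein triple $(L_+,L_-,L_0)$ of Figure \ref{fig:skein_relation} at $c$. Switching $c$ creates a reducing bigon with the next crossing of the $n$-th braid, or empties that braid when $a_n=2$, so the switched diagram represents $\tb(\cf{n}{-2})$ (using the conventions $\cf{n-1}{,0}=\cf{n-2}{}$ and $\alex{[0]}=0$); by Theorem \ref{thm:sign_of_crossing} the sign of $c$ is $t_n$, so $\tb(\cf{n}{})$ is $L_+$ precisely when $t_n=+1$, which produces the term $\alex{\cf{n}{-2}}$ and the sign in front of $(t^{1/2}-t^{-1/2})$ in all four lines. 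The oriented resolution $L_0$ of $c$ has two possible shapes: if the two strands running through the $n$-th braid are coherently oriented it merely deletes one crossing, giving $\tb(\cf{n}{-1})$; if they are incoherently oriented it produces a pair of nested turnbacks which, together with the right cap, cancel and leave $\tb(\cf{n-1}{})$ (with $\alex{[]}=1$). I would then show that the braid is coherent exactly when $t_n=(-1)^n$, by comparing the sign $t_n$ of $c$ with the intrinsic twist sign of the $n$-th braid box (which is $-$ for $n$ odd and $+$ for $n$ even, the box being labelled $-a_n$ resp.\ $a_n$ in Figure \ref{fig:two-brigde_knot}): a coherent orientation makes the box sign and the crossing sign match, an incoherent one makes them opposite. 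Hence $L_0=\tb(\cf{n}{-1})$ in the cases ($n$ odd, $t_n=-1$) and ($n$ even, $t_n=+1$), and $L_0=\tb(\cf{n-1}{})$ in the cases ($n$ odd, $t_n=+1$) and ($n$ even, $t_n=-1$); inserting these into $\alex{L_+}-\alex{L_-}=(t^{1/2}-t^{-1/2})\alex{L_0}$ reproduces exactly the four displayed formulas. The small cases $n\le 2$ (for instance $\tb([2])$ is the Hopf link, with $\alex{[2]}=t^{1/2}-t^{-1/2}$) are checked by hand and also pin down the sign conventions.

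The part I expect to be the real obstacle is the orientation bookkeeping behind the last two steps: one has to check that the orientation the Skein moves induce on the smaller two-bridge links is the one fixed in the paper --- which is subtle when the move changes the number of components, i.e.\ when $q$ changes parity --- and one has to fix the twist-sign convention of the braid box unambiguously. Both are most safely done by direct comparison with the local pictures, with their orientations, already drawn in the proof of Theorem \ref{thm:sign_of_crossing} (Figures \ref{fig:proof_of_sign_odd_10}--\ref{fig:proof_of_sign_10}); this is also the point where the hypothesis $\cf{n}{}\equiv 0/1$ or $1/0$ is used, to keep the relevant smaller continued fractions and the parity of $q$ under control.
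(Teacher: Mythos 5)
Your proposal follows essentially the same route as the paper's proof: a single application of the Skein relation at the rightmost crossing of the $n$-th $2$-strand braid, with Theorem \ref{thm:sign_of_crossing} identifying $\tb(\cf{n}{})$ as $L_+$ or $L_-$ according to $t_n$, the crossing switch yielding $\tb(\cf{n}{-2})$ (via the convention $\cf{n-1}{,0}=\cf{n-2}{}$ when $a_n=2$), and the oriented resolution yielding $\tb(\cf{n}{-1})$ or $\tb(\cf{n-1}{})$ according to whether the strands of that braid are coherently oriented. Your coherence criterion $t_n=(-1)^n$ correctly reproduces the paper's four cases, and your reliance on the oriented local pictures (Figures \ref{fig:proof_of_sign_odd_10}--\ref{fig:proof_of_sign_10}) for the orientation bookkeeping mirrors the paper's own reliance on Figure \ref{fig:proof_of_alexander_recursion}.
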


\begin{proof}
(\ref{eqn:alexander_identity}) clearly holds because Alexander polynomial is a knot invariant. We only prove (\ref{eqn:alexander_recursion}) when $n$ is odd. When $n$ is even, it is proved in the same way. 

First, we show when $a_n \ge 3$. If $t_n = +1$, then the sign of the rightmost crossing of $\tb(\cf{n})$ is $+1$. According to Figure \ref{fig:proof_of_alexander_recursion}, applying Skein relation $\alexander{L_+} = \alexander{L_-} + (t^{1/2} - t^{-1/2})\alexander{L_0}$ to the rightmost crossing yields the equation
\[
\alex{\cf{n}{}} = \alex{\cf{n}{-2}} + (t^{1/2} - t^{-1/2})\alex{{\cf{n-1}{}}}.
\]
If $t_n = -1$, then applying Skein relation $\alexander{L_-} = \alexander{L_+} - (t^{1/2} - t^{-1/2})\alexander{L_0}$ to the rightmost crossing yields the equation
\[
\alex{\cf{n}{}} = \alex{\cf{n}{-2}} - (t^{1/2} - t^{-1/2})\alex{{\cf{n}{-1}}}.
\]

Next we prove when $a_n = 2$. When we perform crossing changes at the rightmost crossing, the rightmost crossing and the crossing to the left disappear. Hence $L_+$ or $L_-$ has the form of the bottom of Figure \ref{fig:proof_of_alexander_recursion}. Since the $(n-1)$-th $2$-strand braid become trivial, it is isotopic to $\tb(\cf{n-2}{})$.
\begin{figure}
	\centering
	\scalebox{0.8}{\input{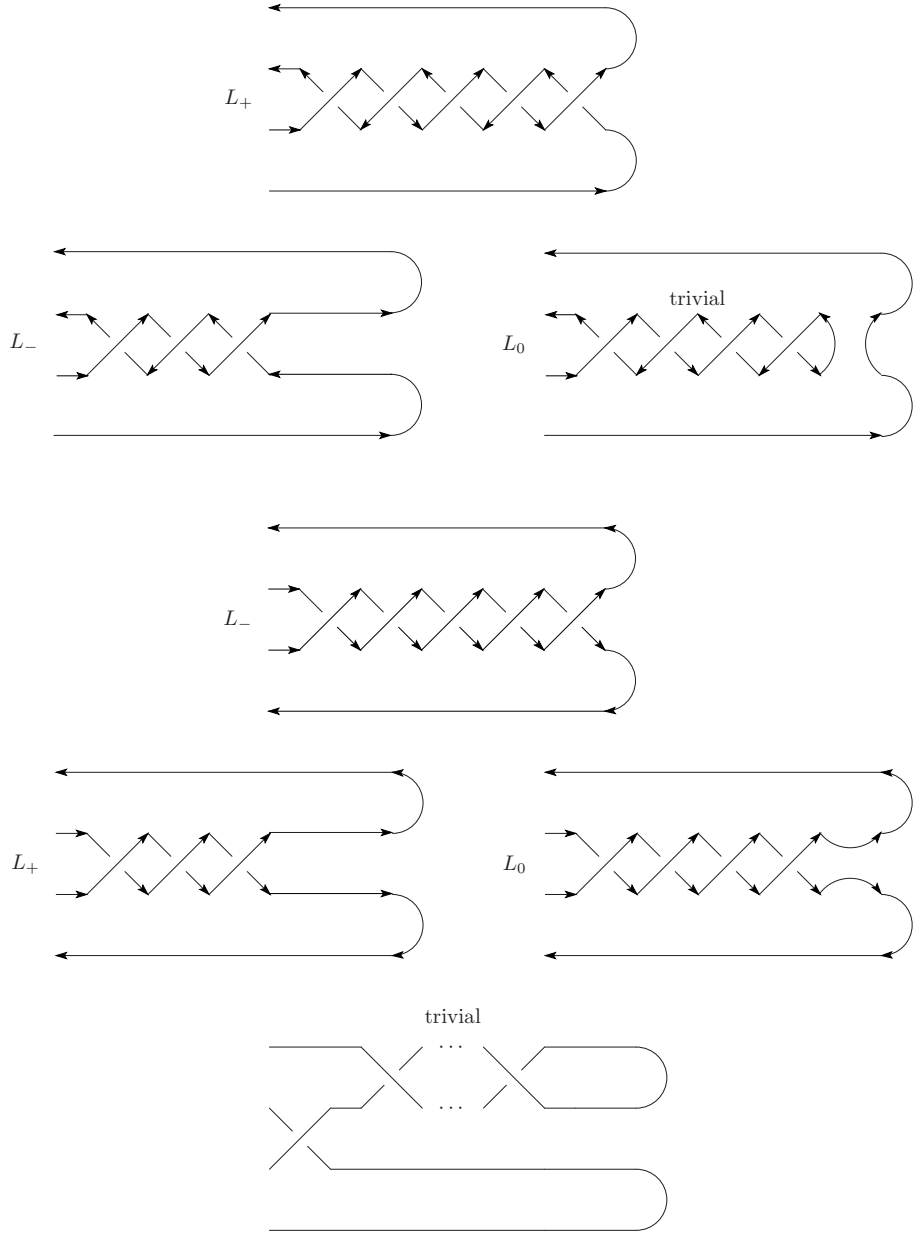}}
	\caption{Proof of Lemma \ref{lem:alexander_recursion}. $t_n = +1$ on the top; $t_n = -1$ on the middle; A diagram after applying the Skein relation when $a_n = 2$ on the bottom.}
	\label{fig:proof_of_alexander_recursion}
\end{figure}
\end{proof}

%\begin{ex}
%\label{exmp:alexander_recursion}
%We compute the Alexander polynomial $\alex{12/19} = \alex{[1,1,1,2,2]}$. By Corollary \ref{cor:sign_of_mirror} and Example \ref{exmp:sign_of_crossing}, we have $t_1 = +1$, $t_2 = +1$, $t_3 = +1$,  $t_4 = -1$, and $t_5 = +1$. Therefore Lemma \ref{lem:alexander_recursion} yields
%\[
%\begin{array}{l}
%\alex{[1,1,1,2,2]} = \alex{[1,1,1,2,0]} + (t^{1/2} - t^{-1/2})\alex{[1,1,1,2]} = \alex{[1,1,1]} + (t^{1/2} - t^{-1/2})\alex{[1,1,1,2]},\\
%\alex{[1,1,1,2]} = \alex{[1,1,1,0]} - (t^{1/2} - t^{-1/2})\alex{[1,1,1]} = \alex{[1,1]} - (t^{1/2} - t^{-1/2})\alex{[1,1,1]},\\
%\alex{[1,1,1]} = \alex{[1,2]} = \alex{[1,0]} + (t^{1/2} - t^{-1/2})\alex{[1,1]} = \alex{[]} + (t^{1/2} - t^{-1/2})\alex{[1,1]},\\
%\alex{[1,1]} = \alex{[2]} = \alex{[0]} + (t^{1/2} - t^{-1/2})\alex{[]}.
%\end{array}
%\]
%Hence we have
%\[
%\begin{array}{rcl}
%\alex{[1,1]} &=& \alex{[2]} ~=~ t^{1/2} - t^{-1/2},\\
%\alex{[1,1,1]} &=& \alex{[1,2]} ~=~ 1 + (t^{1/2} - t^{-1/2})(t^{1/2} - t^{-1/2}) ~=~ t^{-1} - 1 + t,\\
%\alex{[1,1,1,2]} &=& (t^{1/2} - t^{-1/2}) - (t^{1/2} - t^{-1/2})(t^{-1} -1 +t)\\
%&=& t^{-3/2} - 3t^{-1/2} + 3t^{1/2} - t^{3/2},\\
%\alex{[1,1,1,2]} &=& (t^{-1} - 1 + t) + (t^{1/2} - t^{-1/2})(t^{-3/2} - 3t^{-1/2} + 3t^{1/2} - t^{3/2})\\
%&=&  - t^{-2} + 5t^{-1} - 7  + 5t - t^2.
%\end{array}
%\]
%\end{ex}

\subsection{The main result}
Now we prepare to state the main theorem. Let $p/q = \cf{n}{}$ be a fraction. We denote the $i$-th triangle in $\fan_k$ from bottom by $\tri_{i}^{(k)}$, that is, $\tri_{i}^{(k)}$ is $\tri_i$ if $k = 1$ and $\tri_{l_{k-1} + i - 1}$ if $k > 1$. We define a sign of each triangle $\tri$ denoted by $\trisign{}{(\tri)}$ as follows:
\begin{itemize}
\item If the bottom edge of $\fan_k$ is in the Seifert path, then
\[
\trisign{}{(\tri_{i}^{(k)})} = (-1)^i.
\]
\item If $k$ is odd, then
\[
\trisign{}{(\tri_{i}^{(k)})} = \begin{cases}
+1 & \text{if $\fan_k$ is on the left side of the Seifert path;}\\
(-1)^{i-1} & \text{if $\fan_k$ is on the right side of the Seifert path.}\\
\end{cases}
\]
\item If $k$ is even, then
\[
\trisign{}{(\tri_{i}^{(k)})} = \begin{cases}
+1 & \text{if $\fan_k$ is on the right side of the Seifert path;}\\
(-1)^{i-1} & \text{if $\fan_k$ is on the left side of the Seifert path.}\\
\end{cases}
\]
\end{itemize}

We write $\trisign{i}{} = \trisign{}{(\tri_i)}$. We specialize $F$-polynomial $\Fp{\cf{n}{}}$ by setting $y_i = -t^{\trisign{i}{}}$ for all $i$ and denote it by $\SFp{\cf{n}{}}$.

We are ready to state the main theorem.

\begin{thm}
\label{thm:main}
With the above notation
\[
\alex{\cf{n}{}} = \alexsign{} t^{\alexexp{}}\SFp{\cf{n}{}}
\]
where
\begin{align*}
 \alexexp{} &= -\cfrac{1}{2}\sum_{k = 1}^{n}\alexexp{}(k),\\
 \alexexp{}(k) &= \begin{cases}
 a_k - 1 & \text{if $k$ is odd and $t_k = -1$ or $k$ is even and $t_k = +1$;}\\
 t_k & \text{if the bottom edge of $\fan_k$ is in the Seifert path;}\\
 1 & \text{otherwise,}
 \end{cases}\\
 \alexsign{} &= (-1)^{n + \sum_{\text{$i$:even}}a_i - pq}.
\end{align*}
\end{thm}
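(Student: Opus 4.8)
The plan is to prove the identity by induction on the length $n$ of the continued fraction expansion, mirroring exactly the structure of the inductive proofs of Theorem~\ref{thm:sign_of_crossing} and Lemma~\ref{lem:alexander_recursion}. The key point is that all the ingredients are now recursion-compatible: by Lemma~\ref{lem:alexander_recursion} the Alexander polynomials $\alex{\cf{n}{}}$ satisfy a Skein recursion, by Corollary~\ref{cor:F-polynomial_recursion} (applied twice, i.e.\ via Corollary~\ref{cor:F-polynomial_2_recursion}) the $F$-polynomials satisfy a two-step recursion, and by Theorem~\ref{thm:sign_of_crossing} together with Corollary~\ref{cor:sign_formula} the signs $t_k$ are computable from the Seifert path of $\AT{\cf{n}{}}$ and behave predictably under truncation of the continued fraction. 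So the strategy is: reduce to the case $a_i\in\{1,2\}$ for all $i$ (the same reduction as in Theorem~\ref{thm:sign_of_crossing}: increasing some $a_i$ by $2$ changes nothing on the ancestral-triangle/Seifert-path side and corresponds to a crossing change that is absorbed by a single application of Lemma~\ref{lem:alexander_recursion}), then verify the base cases $\cf{n}{}=[2]$, $[1,1]$, $[2,2]$, $[1,2]$, etc.\ by direct computation, and finally run the induction.

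First I would fix conventions and record the bookkeeping. Set $D=\alexexp{}$, $\alexsign{}=\epsilon$, and note that because $\SFp{\cf{n}{}}=\Fp{\cf{n}{}}\big|_{y_i=-t^{\trisign{i}{}}}$, each of the $F$-polynomial recursions of Corollary~\ref{cor:F-polynomial_2_recursion} becomes, after the substitution, a recursion among $\SFp{\cf{n}{}}$, $\SFp{\cf{n}{-1}}$, $\SFp{\cf{n}{-2}}$, $\SFp{\cf{n-1}{}}$, $\SFp{\cf{n-2}{}}$ with coefficients that are monomials $\pm t^{\ast}$ in $t^{\pm 1/2}$ — the exponents of those monomials come from the products $\prod y_i = \prod(-t^{\trisign{i}{}})$ appearing in those recursions, and these are exactly governed by the definition of $\trisign{i}{}$ in terms of whether $\fan_k$ lies on the left or right of the Seifert path. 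I would then show, case by case on the parity of $n$ and the value $t_n\in\{+1,-1\}$, that substituting the inductive hypothesis $\alex{\beta}=\epsilon(\beta)t^{D(\beta)}\SFp{\beta}$ for $\beta\in\{\cf{n-1}{},\cf{n-2}{},\cf{n}{-1},\cf{n}{-2},[a_1,\dots,a_{n-1}]\}$ into the Skein recursion~(\ref{eqn:alexander_recursion}) reproduces, after using the matching $\SFp{}$-recursion, precisely $\epsilon(\cf{n}{})t^{D(\cf{n}{})}\SFp{\cf{n}{}}$. This amounts to checking two scalar identities in each case: (a) the exponent bookkeeping $D(\cf{n}{})$ versus the $D$-values of the sub-fractions shifted by the monomial-coefficient exponents and by the $-\tfrac12$ in the Skein factor $(t^{1/2}-t^{-1/2})$; (b) the sign bookkeeping $\epsilon(\cf{n}{})=(-1)^{n+\sum_{i\text{ even}}a_i-pq}$ versus the signs of the sub-fractions times the sign $\pm$ in the Skein relation (which is $+$ when the relevant crossing sign $t_n=+1$ and $-$ when $t_n=-1$) times the signs coming from the $\prod(-t^{\ast})$ coefficients. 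For (a) one needs the recursion for $\alexexp{}(k)$ against $D(k)$ under truncation, which follows from how $t_k$ and the Seifert-path status of $\fan_k$ change, as tabulated inside the proof of Theorem~\ref{thm:sign_of_crossing}; for (b) one uses that $pq\bmod 2$ and $\sum_{i\text{ even}}a_i\bmod 2$ are controlled by the congruence class $p/q\bmod 2$, which is exactly the data that determines $t_1$ and feeds the Seifert path.

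The reduction to $a_i\in\{1,2\}$ needs one extra remark beyond what appears in Theorem~\ref{thm:sign_of_crossing}: on the $F$-polynomial side, passing from $[a_1,\dots,a_i,\dots,a_n]$ to $[a_1,\dots,a_i-2,\dots,a_n]$ must correspond on the Alexander side to exactly the two-term difference in Lemma~\ref{lem:alexander_recursion} with the sign dictated by $t_i$ and the exponent dictated by $\trisign{}{}$; this is where the middle clause $\alexexp{}(k)=a_k-1$ in the definition of $D$ comes from, and I would verify it using~(\ref{eqn:F-polynomial_rec_odd_12})--(\ref{eqn:F-polynomial_rec_even_2n1}), which are precisely the "$a_i\ge 3$" recursions. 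The identity $\Fp{\cf{n}{-1,1}}=\Fp{\cf{n}{}}$ of Corollary~\ref{cor:F-polynomial_recursion}, together with $\alex{\cf{n}{-1,1}}=\alex{\cf{n}{}}$ and Proposition~\ref{prop:two-bridge_knot_prop}(3), ensures the whole statement is consistent with the choice of continued fraction expansion, so without loss of generality one may always assume $a_n\ge 2$ when applying the recursions.

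The main obstacle, I expect, is the sign verification~(b): keeping track of the four sources of sign ($(-1)^{n+\sum_{\text{even}}a_i-pq}$ for the parent and each child, the $\pm$ in the Skein relation governed by $t_n$, the signs from $\prod(-t^{\trisign{i}{}})$ in the $\SFp{}$-recursion coefficients, and the overall $(-1)$'s hidden in $y_i=-t^{\trisign{i}{}}$ for the newly-created triangles) is genuinely delicate, and each of the (roughly) eight cases — parities of $n$ times $t_n=\pm1$ times $\beta\equiv 1/0$ or not, and separately the $p/q\equiv1/0$ case — must be checked against the correct figure among Figures~\ref{fig:proof_of_sign_odd_10}--\ref{fig:proof_of_sign_10}. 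The exponent check~(a) is comparatively mechanical once the sign structure is pinned down, since it is linear in the relevant quantities. A secondary subtlety is that $\alex{\cf{n}{}}$ for a two-component link is only defined up to the conventions fixed in the Alexander-polynomial subsection, so one should make sure the base-case normalizations (e.g.\ $\alex{[1,2]}$, $\alex{[2,2]}$, $\alex{\circ\circ}=0$) are consistent with the orientation conventions of Figure~\ref{fig:example_of_oriented_two-brigde_knot}; this is handled by the explicit computations in Example~\ref{exmp:alexander_polynomial}.
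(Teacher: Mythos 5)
Your overall plan---induct on the continued fraction, play the Skein recursion of Lemma \ref{lem:alexander_recursion} off against the specialized $F$-polynomial recursions of Corollary \ref{cor:F-polynomial_2_recursion}, and check the exponent and sign bookkeeping case by case on the parity of $n$ and the value of $t_n$---is indeed how the paper argues for the congruence classes $p/q\equiv 0/1$ or $1/0$. But two pieces of your proposal are genuine gaps. First, the reduction to $a_i\in\{1,2\}$ for all $i$ does not transfer from Theorem \ref{thm:sign_of_crossing} to the present statement. There it works because a crossing change inside the $i$-th $2$-strand braid leaves every $\epsilon_k$ and every $t_k$ unchanged; it does \emph{not} leave the Alexander polynomial unchanged (that is exactly what the Skein relation measures), nor does it leave $\Fp{}$ unchanged (passing from $a_i$ to $a_i-2$ deletes two triangles and two variables $y_j$ from the interior of $\AT{p/q}$). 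To absorb such a move you would need a three-term recursion relating $[a_1,\dots,a_i,\dots,a_n]$, $[a_1,\dots,a_i-1,\dots,a_n]$, $[a_1,\dots,a_i-2,\dots,a_n]$ simultaneously for $\alex{}$ and for $\SFp{}$ with matching coefficients; but Lemma \ref{lem:alexander_recursion} and all of (\ref{eqn:F-polynomial_rec_odd2_n12})--(\ref{eqn:F-polynomial_rec_even_2n1}) operate only on the last entry $a_n$, since the path decomposition behind Corollary \ref{cor:F-polynomial_recursion} happens at the top vertex of the ancestral triangle, not in its interior. The induction the available recursions actually support is on $n$ and on $a_n$, treating $a_n=1$, $a_n=2$, $a_n\ge 3$ separately within each parity case; that is what the paper does, and it makes your reduction both unavailable and unnecessary.

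Second, you never address $p/q\equiv 1/1\bmod 2$. Lemma \ref{lem:alexander_recursion} is stated and proved only for $\cf{n}{}\equiv 0/1$ or $1/0$, so the Skein-recursion induction cannot reach the class $1/1$ at all, and your case list omits it. The paper closes this case by a separate mirror argument: writing $s_{p/q}=\sum_i \trisign{i}{}$, one first proves $\alexexp{[1,a_1-1,a_2,\dots,a_n]}+\alexexp{\cf{n}{}}=-s_{p/q}$ and $\alexsign{[1,a_1-1,a_2,\dots,a_n]}\,\alexsign{\cf{n}{}}=(-1)^{N}$, then combines Corollary \ref{cor:mirror_F-polynomial} with the substitution $y_i=-t^{\trisign{i}{}}$ to get $\SFp{[1,a_1-1,a_2,\dots,a_n]}(t)=(-1)^{N}t^{s_{p/q}}\SFp{\cf{n}{}}(t^{-1})$, and finally uses that $\tb((q-p)/q)$ is the mirror image of $\tb(p/q)$ together with the symmetry $\alexander{K}(t)=\alexander{K}(t^{-1})$ to transport the already-proved case to $p/q\equiv 1/1$. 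Without some such step your induction terminates with only two of the three congruence classes covered. The remaining bookkeeping you describe in items (a) and (b) is indeed the substance of the paper's case analysis and is delicate but routine once these two structural issues are repaired.
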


\begin{proof}
Throughout the proof, $\alexexp{\cf{n}{}}$ and $\alexsign{\cf{n}{}}$ denote $\alexexp{}$ and $\alexsign{}$ determined according to $\cf{n}{}$. It is clear from definitions that $\alexexp{\cf{n}{}} = \alexexp{\cf{n}{-1,1}}$ and $\alexsign{\cf{n}{}} = \alexsign{\cf{n}{-1,1}}$. First we prove this theorem when $p/q \equiv 1/0 \text{ or } 0/1$ by induction on $n$ and $a_n$. Note that continued fractions which appear in recursions are equivalent to $0/1$ or $1/0$ and the Seifert path of $\AT{r/s}$ is obtained by restricting that of $\AT{p/q}$ if $r/s$ is on the Seifert path of $\AT{p/q}$.

If $n = 1$, then $a_1$ is even. It is easily seen that
\[
\alex{[a_1]} = \cfrac{a_1}{2}(t^{1/2} - t^{-1/2}).
\]
On the other hand, since the Seifert path goes along the left boundary edge, we have $e_i = (-1)^{i-1}$ and $\alexexp{[a_1]} = -1/2$. An easy computation shows that $\Fp{[a_1]} = \sum_{k=0}^{a_1 - 1} \prod_{i = a_1 - i}^{a_1 - 1}y_i$. Therefore
\[
\Falex{[a_1]} = (-1)^1 t^{-1/2}~ \cfrac{a_1}{2}(1 - t) = \alex{[a_1]}
\]
and the theorem holds.

Now we consider in the case when $n = 2$. If $a_1$ is odd, then the Seifert path goes along the left boundary edges. Thus we have $t_1 = t_2 = +1$ and Lemma \ref{lem:alexander_recursion} implies 
\[
\alex{[a_1,a_2]} = \begin{cases}
\alex{[a_1,a_2 - 2]} + (t^{1/2} - t^{-1/2})\alex{[a_1,a_2 - 1]} & \text{if $a_2 \ge 3$;}\\
1 + (t^{1/2} - t^{-1/2})\alex{[a_1,1]} & \text{if $a_2 = 2$;}\\
\alex{[a_1 + 1]}  & \text{if $a_2 = 1$.}\\
\end{cases}
\]
On the other hand, we have
\[
\trisign{i}{} = \begin{cases}
(-1)^{i-1} & \text{if $1 \le i \le a_1 - 1$;}\\
+1 & \text{if $a_1 \le i$}
\end{cases}
\]
and (\ref{eqn:F-polynomial_identity}) and (\ref{eqn:F-polynomial_rec_even_12}) in Corollary \ref{cor:F-polynomial_2_recursion} yields
\[
\SFp{[a_1,a_2]} = \begin{cases}
(1 - t)\SFp{[a_1,a_2 - 1]} + t\SFp{[a_1,a_2-2]} & \text{if $a_2 \ge 3$;}\\
(1-t)\SFp{[a_1,1]} + t & \text{if $a_2 = 2$;}\\
\SFp{[a_1+1]} & \text{if $a_2 = 1$.}
\end{cases}
\]
Suppose that $a_2 = 1$. By induction on $n$, we have $\alex{a_1 + 1} = \alexsign{[a_1+1]}t^{\alexexp{[a_1+1]}}\SFp{[a_1+1]}$. Since recursions above, the theorem holds. Now suppose that $a_2 = 2$. Since $\alexexp{[a_1,2]} = -1$, $\alexexp{[a_1,1]} = -1/2$, $\alexsign{[a_1,2]} = +1$, and $\alexsign{[a_1,1]} = -1$, we get
\[
\begin{array}{rcl}
\Falex{[a_1,2]} &=& t^{-1}((1-t)\SFp{[a_1,1]} + t)\\
&=& -t^{-1/2}(t^{1/2} - t^{-1/2})\SFp{[a_1,1]} + 1\\
&=& (t^{1/2} - t^{-1/2})\alex{[a_1,1]} + 1 = \alex{[a_1,2]}.
\end{array}
\]
Now suppose that $a_2 \ge 3$. To simplify notation, we write $i$ instead of $[a_1,a_2-i]$ for $i = 0,1,2$ in subscript. By induction on $a_2$, we have $\Falex{i} = \alexander{i}$ for $i=1,2$. Since $\alexsign{0} = -\alexsign{1} = \alexsign{2}$ and
$\alexexp{0} = \alexexp{1} - 1/2 = \alexexp{2} - 1$, we get
\[
\begin{array}{rcl}
\Falex{0} &=& \alexsign{0}t^\alexexp{0}((1 - t)\SFp{1} + t\SFp{2})\\
&=& (t^{1/2} - t^{-1/2})\Falex{1} + \Falex{2}\\
&=& (t^{1/2} - t^{-1/2})\alexander{1} + \alexander{2}\\
&=& \alexander{0},
\end{array}
\]
and the theorem holds.
We prove when $a_1$ is even. Since $p/q \equiv 1/0 \text{ or } 0/1$, $a_2$ is also even. Then we have $t_1 = +1$ and $t_2 = -1$ because the Seifert path is $p/q \rightarrow 1/a_1 \rightarrow 0/1$. Lemma \ref{lem:alexander_recursion} implies 
\[
\alex{[a_1,a_2]} = \begin{cases}
\alex{[a_1,a_2-2]} - (t^{1/2} - t^{-1/2})\alex{[a_1]} & \text{if $a_2 \ge 4$;}\\
1 - (t^{1/2} - t^{-1/2})\alex{[a_1]} & \text{if $a_2 = 2$.}
\end{cases}
\]
On the other hand, since the bottom edge of $\fan_2$ is in the Seifert path, we have
\[
\trisign{i}{} = \begin{cases}
(-1)^{i-1} & \text{if $1 \le i \le a_1 - 1$;}\\
(-1)^{i - a_1 - 1} & \text{if $a_1 \le i$.}
\end{cases}
\]
Thus (\ref{eqn:F-polynomial_rec_even_2n1}) in Corollary \ref{cor:F-polynomial_2_recursion} yields
\[
\SFp{[a_1,a_2]} = \begin{cases}
-t^{-1}(1 - t)\SFp{[a_1]} + \SFp{[a_1,a_2-2]} & \text{if $a_2 \ge 4$;}\\
-t^{-1}(1 - t)\SFp{[a_1]} + 1 & \text{if $a_2 = 2$.}
\end{cases}
\]
Suppose that $a_2 = 2$. By induction on $n$ and definition, we have $\Falex{[a_1]} = \alex{[a_1]}$, $\alexexp{[a_1,2]} = 0$, $\alexexp{[a_1]} = -1/2$, $\alexsign{[a_1,2]} = +1$, and $\alexsign{[a_1]} = -1$. Therefore
\[
\Falex{[a_1,2]} = -t^{-1}(1 - t)\SFp{[a_1]} + 1 = 1 - (t^{1/2} - t^{-1/2})\alex{[a_1]} = \alex{[a_1,2]}
\]
and the theorem holds. Now suppose that $a_2 \ge 4$. By induction and definition, we have $\Falex{[a_1,a_2-2]} = \alexander{[a_1,a_2-2]}$, $\Falex{[a_1]} = \alex{[a_1]}$, $\alexexp{[a_1,a_2]} = \alexexp{[a_1,a_2-2]} = \alexexp{[a_1]} + 1/2$, and $\alexsign{[a_1,a_2]} = \alexsign{[a_1,a_2-2]} = -\alexsign{[a_1]}$. Therefore we get
\[
\begin{array}{rcl}
\Falex{[a_1,a_2]} &=& \alexsign{[a_1,a_2]}t^{\alexexp{[a_1,a_2]}}(-t^{-1}(1 - t)\SFp{[a_1]} + \SFp{[a_1,a_2-2]})\\
&=& \alexsign{[a_1,a_2]}t^{\alexexp{[a_1,a_2]}-1/2}(t^{1/2} - t^{-1/2})\SFp{[a_1]} + \Falex{[a_1,a_2-2]}\\
&=& \alex{[a_1,a_2-2]} - \alex{[a_1]} = \alex{[a_1,a_2]}
\end{array}
\]
and the proof when $n = 2$ is complete.

Now suppose that $n \ge 3$. By induction on $n$, we may assume that
\[
\Falex{\cf{m}{}} = \alex{\cf{m}{}}
\]
for all $m < n$. 

If $n$ is odd and $\cf{n-1}{} \equiv 1/1$, then $\cf{n}{} \equiv 0/1$ or $\cf{n}{+1} \equiv 0/1$. Thus we have $t_n = -1$ because of Corollary \ref{cor:sign_formula}. According to Figure \ref{fig:proof_of_main_odd_minus}, the Seifert path goes the right boundary edges of $\fan_n$ and we have $\trisign{}{(\tri_i^{(n)})} = +1$.

\begin{figure}
	\centering
	{%WinTpicVersion4.32a
{\unitlength 0.1in%
\begin{picture}(29.4000,16.8000)(2.6000,-20.3500)%
% LINE 2 0 3 0 Black White  
% 4 1200 600 800 1800 1200 600 1600 1800
% 
{\color[named]{Black}{%
\special{pn 8}%
\special{pa 1200 600}%
\special{pa 800 1800}%
\special{fp}%
\special{pa 1200 600}%
\special{pa 1600 1800}%
\special{fp}%
}}%
% LINE 2 0 3 0 Black White  
% 2 1000 1200 1270 810
% 
{\color[named]{Black}{%
\special{pn 8}%
\special{pa 1000 1200}%
\special{pa 1270 810}%
\special{fp}%
}}%
% LINE 2 0 3 0 Black White  
% 2 800 1800 1539 1617
% 
{\color[named]{Black}{%
\special{pn 8}%
\special{pa 800 1800}%
\special{pa 1539 1617}%
\special{fp}%
}}%
% LINE 2 0 3 0 Black White  
% 2 1540 1620 950 1350
% 
{\color[named]{Black}{%
\special{pn 8}%
\special{pa 1540 1620}%
\special{pa 950 1350}%
\special{fp}%
}}%
% LINE 2 0 3 0 Black White  
% 2 1000 1200 1540 1620
% 
{\color[named]{Black}{%
\special{pn 8}%
\special{pa 1000 1200}%
\special{pa 1540 1620}%
\special{fp}%
}}%
% STR 2 0 3 0 Black White  
% 4 1230 1070 1230 1170 5 0 0 0
% $\vdots$
\put(12.3000,-11.7000){\makebox(0,0){{\color[named]{Black}{$\vdots$}}}}%
% STR 2 0 3 0 Black White  
% 4 1000 1500 1000 1600 5 0 0 0
% $\vdots$
\put(10.0000,-16.0000){\makebox(0,0){{\color[named]{Black}{$\vdots$}}}}%
% STR 2 0 3 0 Black White  
% 4 1400 1700 1400 1800 5 0 0 0
% $\vdots$
\put(14.0000,-18.0000){\makebox(0,0){{\color[named]{Black}{$\vdots$}}}}%
% LINE 2 0 3 0 Black White  
% 4 2800 600 2400 1800 2800 600 3200 1800
% 
{\color[named]{Black}{%
\special{pn 8}%
\special{pa 2800 600}%
\special{pa 2400 1800}%
\special{fp}%
\special{pa 2800 600}%
\special{pa 3200 1800}%
\special{fp}%
}}%
% LINE 2 0 3 0 Black White  
% 2 2600 1200 2870 810
% 
{\color[named]{Black}{%
\special{pn 8}%
\special{pa 2600 1200}%
\special{pa 2870 810}%
\special{fp}%
}}%
% LINE 2 0 3 0 Black White  
% 2 2400 1800 3140 1620
% 
{\color[named]{Black}{%
\special{pn 8}%
\special{pa 2400 1800}%
\special{pa 3140 1620}%
\special{fp}%
}}%
% LINE 2 0 3 0 Black White  
% 2 3140 1620 2550 1350
% 
{\color[named]{Black}{%
\special{pn 8}%
\special{pa 3140 1620}%
\special{pa 2550 1350}%
\special{fp}%
}}%
% LINE 2 0 3 0 Black White  
% 2 2600 1200 3140 1620
% 
{\color[named]{Black}{%
\special{pn 8}%
\special{pa 2600 1200}%
\special{pa 3140 1620}%
\special{fp}%
}}%
% STR 2 0 3 0 Black White  
% 4 2830 1070 2830 1170 5 0 0 0
% $\vdots$
\put(28.3000,-11.7000){\makebox(0,0){{\color[named]{Black}{$\vdots$}}}}%
% STR 2 0 3 0 Black White  
% 4 2600 1500 2600 1600 5 0 0 0
% $\vdots$
\put(26.0000,-16.0000){\makebox(0,0){{\color[named]{Black}{$\vdots$}}}}%
% STR 2 0 3 0 Black White  
% 4 3000 1700 3000 1800 5 0 0 0
% $\vdots$
\put(30.0000,-18.0000){\makebox(0,0){{\color[named]{Black}{$\vdots$}}}}%
% LINE 0 0 3 0 Black White  
% 2 1200 600 1550 1650
% 
{\color[named]{Black}{%
\special{pn 20}%
\special{pa 1200 600}%
\special{pa 1550 1650}%
\special{fp}%
}}%
% LINE 0 0 3 0 Black White  
% 4 2800 600 3140 1620 3140 1620 2400 1800
% 
{\color[named]{Black}{%
\special{pn 20}%
\special{pa 2800 600}%
\special{pa 3140 1620}%
\special{fp}%
\special{pa 3140 1620}%
\special{pa 2400 1800}%
\special{fp}%
}}%
% STR 2 0 3 0 Black White  
% 4 1200 320 1200 420 5 0 0 0
% $\cfrac{p}{q}$
\put(12.0000,-4.2000){\makebox(0,0){{\color[named]{Black}{$\cfrac{p}{q}$}}}}%
% STR 2 0 3 0 Black White  
% 4 2800 320 2800 420 5 0 0 0
% $\cfrac{p}{q}$
\put(28.0000,-4.2000){\makebox(0,0){{\color[named]{Black}{$\cfrac{p}{q}$}}}}%
% STR 2 0 3 0 Black White  
% 4 820 1080 820 1180 5 0 0 0
% $\cfrac{1}{1}$
\put(8.2000,-11.8000){\makebox(0,0){{\color[named]{Black}{$\cfrac{1}{1}$}}}}%
% STR 2 0 3 0 Black White  
% 4 620 1680 620 1780 5 0 0 0
% $\cfrac{1}{1}$
\put(6.2000,-17.8000){\makebox(0,0){{\color[named]{Black}{$\cfrac{1}{1}$}}}}%
% STR 2 0 3 0 Black White  
% 4 2420 1080 2420 1180 5 0 0 0
% $\cfrac{1}{1}$
\put(24.2000,-11.8000){\makebox(0,0){{\color[named]{Black}{$\cfrac{1}{1}$}}}}%
% STR 2 0 3 0 Black White  
% 4 2800 2000 2800 2100 5 0 0 0
% $a_{n-1}$ is odd.
\put(28.0000,-21.0000){\makebox(0,0){{\color[named]{Black}{$a_{n-1}$ is odd.}}}}%
% STR 2 0 3 0 Black White  
% 4 1200 2000 1200 2100 5 0 0 0
% $a_{n-1}$ is even.
\put(12.0000,-21.0000){\makebox(0,0){{\color[named]{Black}{$a_{n-1}$ is even.}}}}%
\end{picture}}%}
	\caption{Proof when $\cf{n-1}{} \equiv 1/1$. The bold line in each ancestral triangle represents its Seifert path.}
	\label{fig:proof_of_main_odd_minus}
\end{figure}

Now suppose that $a_n = 1$. If $a_{n-1}$ is even, then the bottom edge of $\fan_{n-1}$ is not in the Seifert path. Therefore we have $\trisign{}{(\tri_i^{(n-1)})} = (-1)^{i-1}$ and $\trisign{}{(\tri_1^{(n)})} = +1$. Since $\trisign{}{(\tri_{a_{n-1}+1}^{(n-1)})} = +1$, we have $\SFp{\cf{n-1}{+1}} = \SFp{\cf{n-1}{,1}}$. Therefore we get
\[
\begin{array}{rcl}
\alex{\cf{n-1}{,1}} &=& \alex{\cf{n-1}{+1}}\\
 &=& \Falex{\cf{n-1}{+1}}\\
 &=& \Falex{\cf{n-1}{,1}}.
\end{array}
\]
If $a_{n-1}$ is odd, then the bottom edge of $\fan_{n-1}$ is in the Seifert path. Therefore we have $\trisign{}{(\tri_i^{(n-1)})} = (-1)^{i}$ and $\trisign{}{(\tri_1^{(n)})} = +1$. In the same way, we get $\alex{\cf{n-1}{,1}} = \Falex{\cf{n-1}{,1}}$.

Now suppose that $a_n = 2$. By (\ref{eqn:F-polynomial_rec_odd2_n12}) in Corollary \ref{cor:F-polynomial_2_recursion} and Lemma \ref{lem:alexander_recursion}, we have
\[
\begin{array}{rcl}
\SFp{\cf{n-1}{,2}} &=& \begin{cases}
(1 - t)\SFp{\cf{n-1}{,1}} + t\SFp{\cf{n-2}{}} & \text{if $a_{n-1}$ is even;}\\
(1 - t)\SFp{\cf{n-1}{,1}} - \SFp{\cf{n-2}{}} & \text{if $a_{n-1}$ is odd;}\\
\end{cases}\\
\alex{\cf{n-1}{,2}} &=& \alex{\cf{n-2}{}} - (t^{1/2} - t^{-1/2})\alex{\cf{n-1}{,1}}.
\end{array}
\]
To simplify notation, we write $0$ instead of $\cf{n-1}{,2}$, $1$ instead of ${\cf{n-1}{,1}}$, and $2$ instead of $\cf{n-2}{}$ in subscript.
%\[
%\begin{array}{rclrclrcl}
%\alexexp{0} &=& \alexexp{\cf{n-1}{,2}},& \alexexp{1} &=& \alexexp{\cf{n-1}{,1}},& \alexexp{2} &=& \alexexp{\cf{n-2}{}},\\
%\alexsign{0} &=& \alexsign{\cf{n-1}{,2}},& \alexsign{1} &=& \alexsign{\cf{n-1}{,1}},& \alexsign{2} &=& \alexsign{\cf{n-2}{}},
%\end{array}
%\]
We have $\alexexp{0} = \alexexp{1} - 1/2 = \alexexp{2} - 1$ and $\alexsign{0} = \alexsign{1} = \alexsign{2}$ if $a_{n-1}$ is even and $\alexexp{0} = \alexexp{1} - 1/2 = \alexexp{2}$ and $\alexsign{0} = \alexsign{1} = -\alexsign{2}$ if $a_{n-1}$ is odd. Therefore
\[
\begin{array}{rcl}
\Falex{0} &=& \begin{cases}
\alexsign{0}t^{\alexexp{0}}(1 - t)\SFp{1} + \alexsign{0}t^{\alexexp{0}}t\SFp{2} & \text{if $a_{n-1}$ is even;}\\
\alexsign{0}t^{\alexexp{0}}(1 - t)\SFp{1} - \alexsign{0}t^{\alexexp{0}}\SFp{2} & \text{if $a_{n-1}$ is odd;}\\
\end{cases}\\
&=& \begin{cases}
-\alexsign{1}t^{\alexexp{1}}(t^{1/2} - t^{-1/2})\SFp{1} + \alexsign{2}t^{\alexexp{2}}\SFp{2} & \text{if $a_{n-1}$ is even;}\\
-\alexsign{1}t^{\alexexp{1}}(t^{1/2} - t^{-1/2})\SFp{1} + \alexsign{2}t^{\alexexp{2}}\SFp{2} & \text{if $a_{n-1}$ is odd;}\\
\end{cases}\\
&=& -(t^{1/2} - t^{-1/2})\alex{1} + \alex{2}\\
&=& \alex{0}
\end{array}
\]
and the theorem holds.

Now suppose that $a_n \ge 3$. To simplify notation, we write $i$ instead of $[a_1,a_2-i]$ for $i = 0,1,2$ in subscript. By induction on $a_n$, (\ref{eqn:F-polynomial_rec_odd_12}), and Lemma \ref{lem:alexander_recursion}, we have
\[
\begin{array}{rcl}
\alex{1} &=& \Falex{1},\\
\alex{2} &=& \Falex{2},\\
\SFp{0} &=& (1-t)\SFp{1} + t\SFp{2},\\
\alex{0} &=& \alex{2} - (t^{1/2} - t^{-1/2})\alex{1}.
\end{array}
\]
Since $\alexexp{0} = \alexexp{1} -1/2 = \alexexp{2} - 1$ and $\alexsign{0} = \alexsign{1} = \alexsign{2}$, we get
\[
\begin{array}{rcl}
\Falex{0} &=& \alexsign{0}t^{\alexexp{0}}((1-t)\SFp{1} + t\SFp{2})\\
&=& -\alexsign{1}t^{\alexexp{1}}(t^{1/2}-t^{-1/2})\SFp{1} + \alexsign{2}t^{\alexexp{2}}\SFp{2}\\
&=& -(t^{1/2}-t^{-1/2})\alex{1} + \alex{2}\\
&=& \alex{0}.
\end{array}
\]

If $n$ is odd and $\cf{n-1}{} \not\equiv 1/1$, then we have $t_n = +1$ because of Corollary \ref{cor:sign_formula}. According to Figure \ref{fig:proof_of_main_odd_plus}, we need to consider three cases.

\begin{figure}
	\centering
	{%WinTpicVersion4.32a
{\unitlength 0.1in%
\begin{picture}(40.1000,16.8500)(7.9000,-20.4000)%
% LINE 2 0 3 0 Black White  
% 4 1200 600 800 1800 1200 600 1600 1800
% 
{\color[named]{Black}{%
\special{pn 8}%
\special{pa 1200 600}%
\special{pa 800 1800}%
\special{fp}%
\special{pa 1200 600}%
\special{pa 1600 1800}%
\special{fp}%
}}%
% LINE 2 0 3 0 Black White  
% 2 1000 1200 1270 810
% 
{\color[named]{Black}{%
\special{pn 8}%
\special{pa 1000 1200}%
\special{pa 1270 810}%
\special{fp}%
}}%
% LINE 2 0 3 0 Black White  
% 2 800 1800 1539 1617
% 
{\color[named]{Black}{%
\special{pn 8}%
\special{pa 800 1800}%
\special{pa 1539 1617}%
\special{fp}%
}}%
% LINE 2 0 3 0 Black White  
% 2 1540 1620 950 1350
% 
{\color[named]{Black}{%
\special{pn 8}%
\special{pa 1540 1620}%
\special{pa 950 1350}%
\special{fp}%
}}%
% LINE 2 0 3 0 Black White  
% 2 1000 1200 1540 1620
% 
{\color[named]{Black}{%
\special{pn 8}%
\special{pa 1000 1200}%
\special{pa 1540 1620}%
\special{fp}%
}}%
% STR 2 0 3 0 Black White  
% 4 1230 1070 1230 1170 5 0 0 0
% $\vdots$
\put(12.3000,-11.7000){\makebox(0,0){{\color[named]{Black}{$\vdots$}}}}%
% STR 2 0 3 0 Black White  
% 4 1000 1500 1000 1600 5 0 0 0
% $\vdots$
\put(10.0000,-16.0000){\makebox(0,0){{\color[named]{Black}{$\vdots$}}}}%
% STR 2 0 3 0 Black White  
% 4 1400 1700 1400 1800 5 0 0 0
% $\vdots$
\put(14.0000,-18.0000){\makebox(0,0){{\color[named]{Black}{$\vdots$}}}}%
% LINE 2 0 3 0 Black White  
% 4 2800 600 2400 1800 2800 600 3200 1800
% 
{\color[named]{Black}{%
\special{pn 8}%
\special{pa 2800 600}%
\special{pa 2400 1800}%
\special{fp}%
\special{pa 2800 600}%
\special{pa 3200 1800}%
\special{fp}%
}}%
% LINE 2 0 3 0 Black White  
% 2 2600 1200 2870 810
% 
{\color[named]{Black}{%
\special{pn 8}%
\special{pa 2600 1200}%
\special{pa 2870 810}%
\special{fp}%
}}%
% LINE 2 0 3 0 Black White  
% 2 2400 1800 3140 1620
% 
{\color[named]{Black}{%
\special{pn 8}%
\special{pa 2400 1800}%
\special{pa 3140 1620}%
\special{fp}%
}}%
% LINE 2 0 3 0 Black White  
% 2 3140 1620 2550 1350
% 
{\color[named]{Black}{%
\special{pn 8}%
\special{pa 3140 1620}%
\special{pa 2550 1350}%
\special{fp}%
}}%
% LINE 2 0 3 0 Black White  
% 2 2600 1200 3140 1620
% 
{\color[named]{Black}{%
\special{pn 8}%
\special{pa 2600 1200}%
\special{pa 3140 1620}%
\special{fp}%
}}%
% STR 2 0 3 0 Black White  
% 4 2830 1070 2830 1170 5 0 0 0
% $\vdots$
\put(28.3000,-11.7000){\makebox(0,0){{\color[named]{Black}{$\vdots$}}}}%
% STR 2 0 3 0 Black White  
% 4 2600 1500 2600 1600 5 0 0 0
% $\vdots$
\put(26.0000,-16.0000){\makebox(0,0){{\color[named]{Black}{$\vdots$}}}}%
% STR 2 0 3 0 Black White  
% 4 3000 1700 3000 1800 5 0 0 0
% $\vdots$
\put(30.0000,-18.0000){\makebox(0,0){{\color[named]{Black}{$\vdots$}}}}%
% STR 2 0 3 0 Black White  
% 4 1200 320 1200 420 5 0 0 0
% $\cfrac{p}{q}$
\put(12.0000,-4.2000){\makebox(0,0){{\color[named]{Black}{$\cfrac{p}{q}$}}}}%
% STR 2 0 3 0 Black White  
% 4 2800 320 2800 420 5 0 0 0
% $\cfrac{p}{q}$
\put(28.0000,-4.2000){\makebox(0,0){{\color[named]{Black}{$\cfrac{p}{q}$}}}}%
% STR 2 0 3 0 Black White  
% 4 1400 680 1400 780 5 0 0 0
% $\cfrac{1}{1}$
\put(14.0000,-7.8000){\makebox(0,0){{\color[named]{Black}{$\cfrac{1}{1}$}}}}%
% STR 2 0 3 0 Black White  
% 4 2420 1260 2420 1360 5 0 0 0
% $\cfrac{1}{1}$
\put(24.2000,-13.6000){\makebox(0,0){{\color[named]{Black}{$\cfrac{1}{1}$}}}}%
% STR 2 0 3 0 Black White  
% 4 3000 680 3000 780 5 0 0 0
% $\cfrac{1}{1}$
\put(30.0000,-7.8000){\makebox(0,0){{\color[named]{Black}{$\cfrac{1}{1}$}}}}%
% STR 2 0 3 0 Black White  
% 4 2800 2000 2800 2100 5 0 0 0
% $a_{n}$ and $a_{n-1}$ are even.
\put(28.0000,-21.0000){\makebox(0,0){{\color[named]{Black}{$a_{n}$ and $a_{n-1}$ are even.}}}}%
% STR 2 0 3 0 Black White  
% 4 1200 2000 1200 2100 5 0 0 0
% $a_{n}$ is odd.
\put(12.0000,-21.0000){\makebox(0,0){{\color[named]{Black}{$a_{n}$ is odd.}}}}%
% LINE 2 0 3 0 Black White  
% 4 4400 605 4000 1805 4400 605 4800 1805
% 
{\color[named]{Black}{%
\special{pn 8}%
\special{pa 4400 605}%
\special{pa 4000 1805}%
\special{fp}%
\special{pa 4400 605}%
\special{pa 4800 1805}%
\special{fp}%
}}%
% LINE 2 0 3 0 Black White  
% 2 4200 1205 4470 815
% 
{\color[named]{Black}{%
\special{pn 8}%
\special{pa 4200 1205}%
\special{pa 4470 815}%
\special{fp}%
}}%
% LINE 2 0 3 0 Black White  
% 2 4000 1805 4740 1625
% 
{\color[named]{Black}{%
\special{pn 8}%
\special{pa 4000 1805}%
\special{pa 4740 1625}%
\special{fp}%
}}%
% LINE 2 0 3 0 Black White  
% 2 4740 1625 4150 1355
% 
{\color[named]{Black}{%
\special{pn 8}%
\special{pa 4740 1625}%
\special{pa 4150 1355}%
\special{fp}%
}}%
% LINE 2 0 3 0 Black White  
% 2 4200 1205 4740 1625
% 
{\color[named]{Black}{%
\special{pn 8}%
\special{pa 4200 1205}%
\special{pa 4740 1625}%
\special{fp}%
}}%
% STR 2 0 3 0 Black White  
% 4 4430 1075 4430 1175 5 0 0 0
% $\vdots$
\put(44.3000,-11.7500){\makebox(0,0){{\color[named]{Black}{$\vdots$}}}}%
% STR 2 0 3 0 Black White  
% 4 4200 1505 4200 1605 5 0 0 0
% $\vdots$
\put(42.0000,-16.0500){\makebox(0,0){{\color[named]{Black}{$\vdots$}}}}%
% STR 2 0 3 0 Black White  
% 4 4600 1705 4600 1805 5 0 0 0
% $\vdots$
\put(46.0000,-18.0500){\makebox(0,0){{\color[named]{Black}{$\vdots$}}}}%
% STR 2 0 3 0 Black White  
% 4 4400 325 4400 425 5 0 0 0
% $\cfrac{p}{q}$
\put(44.0000,-4.2500){\makebox(0,0){{\color[named]{Black}{$\cfrac{p}{q}$}}}}%
% STR 2 0 3 0 Black White  
% 4 4600 685 4600 785 5 0 0 0
% $\cfrac{1}{1}$
\put(46.0000,-7.8500){\makebox(0,0){{\color[named]{Black}{$\cfrac{1}{1}$}}}}%
% STR 2 0 3 0 Black White  
% 4 4400 2005 4400 2105 5 0 0 0
% $a_{n}$ is even and $a_{n-1}$ is odd.
\put(44.0000,-21.0500){\makebox(0,0){{\color[named]{Black}{$a_{n}$ is even and $a_{n-1}$ is odd.}}}}%
% STR 2 0 3 0 Black White  
% 4 1620 1480 1620 1580 5 0 0 0
% $\cfrac{1}{1}$
\put(16.2000,-15.8000){\makebox(0,0){{\color[named]{Black}{$\cfrac{1}{1}$}}}}%
% LINE 0 0 3 0 Black White  
% 2 1200 600 800 1800
% 
{\color[named]{Black}{%
\special{pn 20}%
\special{pa 1200 600}%
\special{pa 800 1800}%
\special{fp}%
}}%
% STR 2 0 3 0 Black White  
% 4 4020 1260 4020 1360 5 0 0 0
% $\cfrac{1}{1}$
\put(40.2000,-13.6000){\makebox(0,0){{\color[named]{Black}{$\cfrac{1}{1}$}}}}%
% STR 2 0 3 0 Black White  
% 4 3840 1680 3840 1780 5 0 0 0
% $\cfrac{1}{1}$
\put(38.4000,-17.8000){\makebox(0,0){{\color[named]{Black}{$\cfrac{1}{1}$}}}}%
% LINE 0 0 3 0 Black White  
% 6 2800 600 2600 1200 2600 1200 3140 1620 3140 1620 2400 1800
% 
{\color[named]{Black}{%
\special{pn 20}%
\special{pa 2800 600}%
\special{pa 2600 1200}%
\special{fp}%
\special{pa 2600 1200}%
\special{pa 3140 1620}%
\special{fp}%
\special{pa 3140 1620}%
\special{pa 2400 1800}%
\special{fp}%
}}%
% LINE 0 0 3 0 Black White  
% 6 4400 600 4200 1200 4200 1200 4740 1620 4740 1620 4770 1710
% 
{\color[named]{Black}{%
\special{pn 20}%
\special{pa 4400 600}%
\special{pa 4200 1200}%
\special{fp}%
\special{pa 4200 1200}%
\special{pa 4740 1620}%
\special{fp}%
\special{pa 4740 1620}%
\special{pa 4770 1710}%
\special{fp}%
}}%
\end{picture}}%}
	\caption{Proof when $\cf{n-1}{} \not\equiv 1/1$. The bold line in each ancestral triangle represents the Seifert path.}
	\label{fig:proof_of_main_odd_plus}
\end{figure}

When $a_{n}$ is odd, both of $\fan_{n}$ and $\fan_{n-1}$ are on the right of the Seifert path. Thus we have $\trisign{}{(\tri_i^{(n-1)})} = +1$ and $\trisign{}{(\tri_i^{(n)})} = (-1)^{i-1}$. If $a_n = 1$, then since $\trisign{}{(\tri_{a_{n-1}+1}^{(n-1)})} = \trisign{}{(\tri_1^{(n)})} = +1$, we get $\SFp{\cf{n-1}{+1}} = \SFp{\cf{n-1}{,1}}$ and the result follows. Now suppose that $a_n \ge 3$. To simplify notation, we write $0$ instead of $\cf{n}{}$, $1$ instead of ${\cf{n-1}{}}$, and $2$ instead of $\cf{n}{-2}$ in subscript.
%\[
%\begin{array}{rclrclrcl}
%\alexexp{0} &=& \alexexp{\cf{n}{}},& \alexexp{1} &=& \alexexp{\cf{n-1}{}},& \alexexp{2} &=& \alexexp{\cf{n}{-2}},\\
%\alexsign{0} &=& \alexsign{\cf{n}{}},& \alexsign{1} &=& \alexsign{\cf{n-1}{}},& \alexsign{2} &=& \alexsign{\cf{n}{-2}},\\
%\SFp{0} &=& \SFp{\cf{n}{}},& \SFp{1} &=& \SFp{\cf{n-1}{}},& \SFp{2} &=& \SFp{\cf{n}{-2}},\\
%\alex{0} &=& \alex{\cf{n}{}},& \alex{1} &=& \alex{\cf{n-1}{}},& \alex{2} &=& \alex{\cf{n}{-2}}.
%\end{array}
%\]
By induction on $a_n$, (\ref{eqn:F-polynomial_rec_odd_2n1}), and Lemma \ref{lem:alexander_recursion}, we have
\[
\begin{array}{rcl}
\alex{2} &=& \Falex{2},\\
\SFp{0} &=& (1-t)\SFp{1} + \SFp{2},\\
\alex{0} &=& \alex{2} + (t^{1/2} - t^{-1/2})\alex{1}.
\end{array}
\]
Since $\alexexp{0} = \alexexp{2} = \alexexp{1} - 1/2$ and $\alexsign{0} = \alexsign{2} = -\alexsign{1}$, we get
\[
\begin{array}{rcl}
\Falex{0} &=& \alexsign{0}t^{\alexexp{0}}((1-t)\SFp{1} + \SFp{2})\\
&=& \alexsign{1}t^{\alexexp{1}}(t^{1/2}-t^{-1/2})\SFp{1} + \alexsign{2}t^{\alexexp{2}}\SFp{2}\\
&=& (t^{1/2}-t^{-1/2})\alex{1} + \alex{2} = \alex{0}
\end{array}
\]
and the theorem holds.

When $a_n$ and $a_{n-1}$ are even, both of the bottom edge of $\fan_{n-1}$ and that of $\fan_n$ is in the Seifert path. Then we have $\trisign{}{(\tri_i^{(n-1)})} = (-1)^{i}$ and $\trisign{}{(\tri_i^{(n)})} = (-1)^{i}$. Hence (\ref{eqn:F-polynomial_rec_odd2_n12}) yields
\[
\SFp{\cf{n-1}{,2}} = (1-t)\SFp{\cf{n-1}{}} + \SFp{\cf{n-2}{}}
\]
and (\ref{eqn:F-polynomial_rec_odd_2n1}) yields
\[
\SFp{\cf{n}{}} = (1-t)\SFp{\cf{n-1}{}} + \SFp{\cf{n}{-2}}.
\]
The first recursion is included by the second recursion with the convention that $\cf{n-1}{,0} = \cf{n-2}{}$. On the other hand, by Lemma \ref{lem:alexander_recursion} we have
\[
\alex{\cf{n}{}} = (1-t)\alex{\cf{n-1}{}} + \alex{\cf{n}{-2}}.
\]
with the same convention. To simplify notation, we write $0$ instead of $\cf{n}{}$, $1$ instead of ${\cf{n-1}{}}$, and $2$ instead of $\cf{n}{-2}$ in subscript. Since $\alexexp{0} = \alexexp{1} - 1/2 = \alexexp{2}$ and $\alexsign{0} = -\alexsign{1} = \alexsign{2}$, we get
\[
\begin{array}{rcl}
\Falex{0} &=& \alexsign{0}t^{\alexexp{0}}((1-t)\SFp{1} + \SFp{2})\\
&=& \alexsign{1}t^{\alexexp{1}}(t^{1/2}-t^{-1/2})\SFp{1} + \alexsign{2}t^{\alexexp{2}}\SFp{2}.
\end{array}
\]
Suppose that $a_n = 2$. Thus we have $\alex{i} = \Falex{i}$ for $i = 1,2$ by induction on $n$ and we get $\alex{0} = \Falex{0}$. Suppose that $a_n \ge 4$. Then we have $\alex{1} = \Falex{1}$ by induction on $n$ and $\alex{2} = \Falex{2}$ by induction on $a_n$. Therefore $\alex{0} = \Falex{0}$ as required.

When $a_n$ is even and $a_{n-1}$ is odd, the bottom edge of $\fan_n$ is in the Seifert path and that of $\fan_{n-1}$ is not. Since $\fan_{n-1}$ is on the left side of the Seifert path, we have $\trisign{}{(\tri_i^{(n-1)})} = (-1)^{i-1}$ and $\trisign{}{(\tri_i^{(n)})} = (-1)^{i}$. Suppose that $a_n = 2$. To simplify notation, we write $0$ instead of $\cf{n-1}{,2}$, $1$ instead of ${\cf{n-1}{}}$, and $2$ instead of $\cf{n-2}{}$ in subscript. We have $\alexexp{0} = \alexexp{1} - 1/2 = \alexexp{2} - 1$ and $\alexsign{0} = -\alexsign{1} = -\alexsign{2}$. By (\ref{eqn:F-polynomial_rec_odd2_n12}) and Lemma \ref{lem:alexander_recursion}, we have
\[
\begin{array}{rcl}
\SFp{0} &=& (1-t)\SFp{1} -t \SFp{2},\\
\alex{0} &=& \alex{2} + (t^{1/2} - t^{-1/2})\alex{1}.\\
\end{array}
\]
Therefore we get
\[
\begin{array}{rcl}
\Falex{0} &=& \alexsign{0}t^{\alexexp{0}}((1-t)\SFp{1} -t \SFp{2})\\
&=& \alexsign{1}t^{\alexexp{1}}(t^{1/2} - t^{-1/2})\SFp{1} - \alexsign{2}t^{\alexexp{2}}\SFp{2}\\
&=& (t^{1/2} - t^{-1/2})\alex{1} + \alex{2}\\
&=& \alex{0}
\end{array}
\]
and the theorem holds. If $a_n \ge 4$, then the recursion is same as the case when both of $a_n$ and $a_{n-1}$ are even. Therefore the theorem also holds for $a_n \ge 4$. 

The proof is complete when $n$ is odd. We prove when $n$ is even. If $\cf{n-1}{} \equiv 1/1$, then either $\cf{n}{} \equiv 0/1$ or $\cf{n}{+1} \equiv 0/1$ holds. Thus we have $t_n = +1$ because of Corollary \ref{cor:sign_formula}. According to Figure \ref{fig:proof_of_main_even_plus}, the Seifert path goes the left boundary edges of $\fan_n$ and $\fan_n$ is on the right side of the Seifert path. Thus we have $\trisign{}{(\tri_i^{(n)})} = +1$.

\begin{figure}
	\centering
	{%WinTpicVersion4.32a
{\unitlength 0.1in%
\begin{picture}(24.0000,16.8500)(8.5000,-20.3500)%
% LINE 2 0 3 0 Black White  
% 4 1300 595 1700 1795 1300 595 900 1795
% 
{\color[named]{Black}{%
\special{pn 8}%
\special{pa 1300 595}%
\special{pa 1700 1795}%
\special{fp}%
\special{pa 1300 595}%
\special{pa 900 1795}%
\special{fp}%
}}%
% LINE 2 0 3 0 Black White  
% 2 1500 1195 1230 805
% 
{\color[named]{Black}{%
\special{pn 8}%
\special{pa 1500 1195}%
\special{pa 1230 805}%
\special{fp}%
}}%
% LINE 2 0 3 0 Black White  
% 2 1700 1795 961 1612
% 
{\color[named]{Black}{%
\special{pn 8}%
\special{pa 1700 1795}%
\special{pa 961 1612}%
\special{fp}%
}}%
% LINE 2 0 3 0 Black White  
% 2 960 1615 1550 1345
% 
{\color[named]{Black}{%
\special{pn 8}%
\special{pa 960 1615}%
\special{pa 1550 1345}%
\special{fp}%
}}%
% LINE 2 0 3 0 Black White  
% 2 1500 1195 960 1615
% 
{\color[named]{Black}{%
\special{pn 8}%
\special{pa 1500 1195}%
\special{pa 960 1615}%
\special{fp}%
}}%
% STR 2 0 3 0 Black White  
% 4 1270 1065 1270 1165 5 0 0 0
% $\vdots$
\put(12.7000,-11.6500){\makebox(0,0){{\color[named]{Black}{$\vdots$}}}}%
% STR 2 0 3 0 Black White  
% 4 1500 1495 1500 1595 5 0 0 0
% $\vdots$
\put(15.0000,-15.9500){\makebox(0,0){{\color[named]{Black}{$\vdots$}}}}%
% STR 2 0 3 0 Black White  
% 4 1100 1695 1100 1795 5 0 0 0
% $\vdots$
\put(11.0000,-17.9500){\makebox(0,0){{\color[named]{Black}{$\vdots$}}}}%
% LINE 2 0 3 0 Black White  
% 4 2850 600 3250 1800 2850 600 2450 1800
% 
{\color[named]{Black}{%
\special{pn 8}%
\special{pa 2850 600}%
\special{pa 3250 1800}%
\special{fp}%
\special{pa 2850 600}%
\special{pa 2450 1800}%
\special{fp}%
}}%
% LINE 2 0 3 0 Black White  
% 2 3050 1200 2780 810
% 
{\color[named]{Black}{%
\special{pn 8}%
\special{pa 3050 1200}%
\special{pa 2780 810}%
\special{fp}%
}}%
% LINE 2 0 3 0 Black White  
% 2 3250 1800 2510 1620
% 
{\color[named]{Black}{%
\special{pn 8}%
\special{pa 3250 1800}%
\special{pa 2510 1620}%
\special{fp}%
}}%
% LINE 2 0 3 0 Black White  
% 2 2510 1620 3100 1350
% 
{\color[named]{Black}{%
\special{pn 8}%
\special{pa 2510 1620}%
\special{pa 3100 1350}%
\special{fp}%
}}%
% LINE 2 0 3 0 Black White  
% 2 3050 1200 2510 1620
% 
{\color[named]{Black}{%
\special{pn 8}%
\special{pa 3050 1200}%
\special{pa 2510 1620}%
\special{fp}%
}}%
% STR 2 0 3 0 Black White  
% 4 2820 1070 2820 1170 5 0 0 0
% $\vdots$
\put(28.2000,-11.7000){\makebox(0,0){{\color[named]{Black}{$\vdots$}}}}%
% STR 2 0 3 0 Black White  
% 4 3050 1500 3050 1600 5 0 0 0
% $\vdots$
\put(30.5000,-16.0000){\makebox(0,0){{\color[named]{Black}{$\vdots$}}}}%
% STR 2 0 3 0 Black White  
% 4 2650 1700 2650 1800 5 0 0 0
% $\vdots$
\put(26.5000,-18.0000){\makebox(0,0){{\color[named]{Black}{$\vdots$}}}}%
% LINE 0 0 3 0 Black White  
% 2 1300 595 950 1645
% 
{\color[named]{Black}{%
\special{pn 20}%
\special{pa 1300 595}%
\special{pa 950 1645}%
\special{fp}%
}}%
% LINE 0 0 3 0 Black White  
% 4 2850 600 2510 1620 2510 1620 3250 1800
% 
{\color[named]{Black}{%
\special{pn 20}%
\special{pa 2850 600}%
\special{pa 2510 1620}%
\special{fp}%
\special{pa 2510 1620}%
\special{pa 3250 1800}%
\special{fp}%
}}%
% STR 2 0 3 0 Black White  
% 4 1300 315 1300 415 5 0 0 0
% $\cfrac{p}{q}$
\put(13.0000,-4.1500){\makebox(0,0){{\color[named]{Black}{$\cfrac{p}{q}$}}}}%
% STR 2 0 3 0 Black White  
% 4 2850 320 2850 420 5 0 0 0
% $\cfrac{p}{q}$
\put(28.5000,-4.2000){\makebox(0,0){{\color[named]{Black}{$\cfrac{p}{q}$}}}}%
% STR 2 0 3 0 Black White  
% 4 1680 1075 1680 1175 5 0 0 0
% $\cfrac{1}{1}$
\put(16.8000,-11.7500){\makebox(0,0){{\color[named]{Black}{$\cfrac{1}{1}$}}}}%
% STR 2 0 3 0 Black White  
% 4 1880 1675 1880 1775 5 0 0 0
% $\cfrac{1}{1}$
\put(18.8000,-17.7500){\makebox(0,0){{\color[named]{Black}{$\cfrac{1}{1}$}}}}%
% STR 2 0 3 0 Black White  
% 4 3230 1080 3230 1180 5 0 0 0
% $\cfrac{1}{1}$
\put(32.3000,-11.8000){\makebox(0,0){{\color[named]{Black}{$\cfrac{1}{1}$}}}}%
% STR 2 0 3 0 Black White  
% 4 2850 2000 2850 2100 5 0 0 0
% $a_{n-1}$ is odd.
\put(28.5000,-21.0000){\makebox(0,0){{\color[named]{Black}{$a_{n-1}$ is odd.}}}}%
% STR 2 0 3 0 Black White  
% 4 1300 1995 1300 2095 5 0 0 0
% $a_{n-1}$ is even.
\put(13.0000,-20.9500){\makebox(0,0){{\color[named]{Black}{$a_{n-1}$ is even.}}}}%
\end{picture}}%}
	\caption{Proof when $\cf{n-1}{} \equiv 1/1$. The bold line in each ancestral triangle represents its Seifert path.}
	\label{fig:proof_of_main_even_plus}
\end{figure}
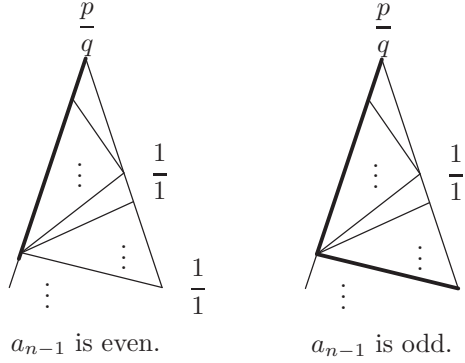

Suppose that $a_n = 1$. According to Figure \ref{fig:proof_of_main_even_plus}, the bottom edge is in the Seifert path if $a_{n-1}$ is even and not if $a_{n-1}$ is odd. Therefore we have
\[
\trisign{}{(\tri_i^{(n-1)})} = \begin{cases}
(-1)^{i-1} & \text{if $a_{n-1}$ is odd;}\\
(-1)^{i} & \text{if $a_{n-1}$ is even.}
\end{cases}
\]
In particular, $\trisign{}{(\tri_{a_{n-1}+1}^{(n-1)})}$ is $+1$. Therefore we have $\SFp{\cf{n-1}{,1}} = \SFp{\cf{n-1}{,1}}$ and $\alex{\cf{n-1}{,1}} = \alex{\cf{n-1}{,1}}$ follows.

Now suppose that $a_n \ge 2$. To simplify notation, we write $0$ instead of $\cf{n-1}{,2}$, $1$ instead of $\cf{n}{-1}$, and $2$ instead of $\cf{n}{-2}$ in subscript. It is easy to check that $\alexexp{0} = \alexexp{1} - 1/2 = \alexexp{2} - 1$ and $\alexsign{0} = -\alexsign{1} =\alexsign{2}$ for $a_n \ge 2$. By induction on $a_n$, (\ref{eqn:F-polynomial_rec_even_12}), and Lemma \ref{lem:alexander_recursion}, we have
\[
\begin{array}{rcl}
\SFp{0} &=& (1 - t)\SFp{1} + t\SFp{2};\\
\alex{0} &=& \alex{1} + (t^{1/2} - t^{-1/2})\alex{2};\\
\alex{1} &=&\Falex{1};\\
\alex{2} &=& \Falex{2};
\end{array}
\]
with the convention that $\cf{n-1}{0} = \cf{n-2}{}$. Therefore we obtain
\[
\begin{array}{rcl}
\Falex{0} &=& \alexsign{0}t^{\alexexp{0}}((1 - t)\SFp{1} + t\SFp{2})\\
&=& \alexsign{1}t^{\alexexp{1}}(t^{1/2} - t^{-1/2})\SFp{1} + \alexsign{2}t^{\alexexp{2}}\SFp{2}\\
&=& (t^{1/2} - t^{-1/2})\alex{1} + \alex{2}\\
&=& \alex{0},
\end{array}
\]
and the theorem holds.

If $\cf{n-1}{} \not\equiv 1/1$, then we have $t_n = -1$ by Corollary \ref{cor:sign_formula}. According to Figure \ref{fig:proof_of_main_even_minus}, we need to consider two cases.

\begin{figure}
	\centering
	{%WinTpicVersion4.32a
{\unitlength 0.1in%
\begin{picture}(28.2000,16.9000)(19.9000,-20.3000)%
% LINE 2 0 3 0 Black White  
% 4 4410 595 4810 1795 4410 595 4010 1795
% 
{\color[named]{Black}{%
\special{pn 8}%
\special{pa 4410 595}%
\special{pa 4810 1795}%
\special{fp}%
\special{pa 4410 595}%
\special{pa 4010 1795}%
\special{fp}%
}}%
% LINE 2 0 3 0 Black White  
% 2 4610 1195 4340 805
% 
{\color[named]{Black}{%
\special{pn 8}%
\special{pa 4610 1195}%
\special{pa 4340 805}%
\special{fp}%
}}%
% LINE 2 0 3 0 Black White  
% 2 4810 1795 4071 1612
% 
{\color[named]{Black}{%
\special{pn 8}%
\special{pa 4810 1795}%
\special{pa 4071 1612}%
\special{fp}%
}}%
% LINE 2 0 3 0 Black White  
% 2 4070 1615 4660 1345
% 
{\color[named]{Black}{%
\special{pn 8}%
\special{pa 4070 1615}%
\special{pa 4660 1345}%
\special{fp}%
}}%
% LINE 2 0 3 0 Black White  
% 2 4610 1195 4070 1615
% 
{\color[named]{Black}{%
\special{pn 8}%
\special{pa 4610 1195}%
\special{pa 4070 1615}%
\special{fp}%
}}%
% STR 2 0 3 0 Black White  
% 4 4380 1065 4380 1165 5 0 0 0
% $\vdots$
\put(43.8000,-11.6500){\makebox(0,0){{\color[named]{Black}{$\vdots$}}}}%
% STR 2 0 3 0 Black White  
% 4 4610 1495 4610 1595 5 0 0 0
% $\vdots$
\put(46.1000,-15.9500){\makebox(0,0){{\color[named]{Black}{$\vdots$}}}}%
% STR 2 0 3 0 Black White  
% 4 4210 1695 4210 1795 5 0 0 0
% $\vdots$
\put(42.1000,-17.9500){\makebox(0,0){{\color[named]{Black}{$\vdots$}}}}%
% LINE 2 0 3 0 Black White  
% 4 2800 585 3200 1785 2800 585 2400 1785
% 
{\color[named]{Black}{%
\special{pn 8}%
\special{pa 2800 585}%
\special{pa 3200 1785}%
\special{fp}%
\special{pa 2800 585}%
\special{pa 2400 1785}%
\special{fp}%
}}%
% LINE 2 0 3 0 Black White  
% 2 3000 1185 2730 795
% 
{\color[named]{Black}{%
\special{pn 8}%
\special{pa 3000 1185}%
\special{pa 2730 795}%
\special{fp}%
}}%
% LINE 2 0 3 0 Black White  
% 2 3200 1785 2460 1605
% 
{\color[named]{Black}{%
\special{pn 8}%
\special{pa 3200 1785}%
\special{pa 2460 1605}%
\special{fp}%
}}%
% LINE 2 0 3 0 Black White  
% 2 2460 1605 3050 1335
% 
{\color[named]{Black}{%
\special{pn 8}%
\special{pa 2460 1605}%
\special{pa 3050 1335}%
\special{fp}%
}}%
% LINE 2 0 3 0 Black White  
% 2 3000 1185 2460 1605
% 
{\color[named]{Black}{%
\special{pn 8}%
\special{pa 3000 1185}%
\special{pa 2460 1605}%
\special{fp}%
}}%
% STR 2 0 3 0 Black White  
% 4 2770 1055 2770 1155 5 0 0 0
% $\vdots$
\put(27.7000,-11.5500){\makebox(0,0){{\color[named]{Black}{$\vdots$}}}}%
% STR 2 0 3 0 Black White  
% 4 3000 1485 3000 1585 5 0 0 0
% $\vdots$
\put(30.0000,-15.8500){\makebox(0,0){{\color[named]{Black}{$\vdots$}}}}%
% STR 2 0 3 0 Black White  
% 4 2600 1685 2600 1785 5 0 0 0
% $\vdots$
\put(26.0000,-17.8500){\makebox(0,0){{\color[named]{Black}{$\vdots$}}}}%
% STR 2 0 3 0 Black White  
% 4 4410 315 4410 415 5 0 0 0
% $\cfrac{p}{q}$
\put(44.1000,-4.1500){\makebox(0,0){{\color[named]{Black}{$\cfrac{p}{q}$}}}}%
% STR 2 0 3 0 Black White  
% 4 2800 305 2800 405 5 0 0 0
% $\cfrac{p}{q}$
\put(28.0000,-4.0500){\makebox(0,0){{\color[named]{Black}{$\cfrac{p}{q}$}}}}%
% STR 2 0 3 0 Black White  
% 4 4210 680 4210 780 5 0 0 0
% $\cfrac{1}{1}$
\put(42.1000,-7.8000){\makebox(0,0){{\color[named]{Black}{$\cfrac{1}{1}$}}}}%
% STR 2 0 3 0 Black White  
% 4 4790 1250 4790 1350 5 0 0 0
% $\cfrac{1}{1}$
\put(47.9000,-13.5000){\makebox(0,0){{\color[named]{Black}{$\cfrac{1}{1}$}}}}%
% STR 2 0 3 0 Black White  
% 4 2610 685 2610 785 5 0 0 0
% $\cfrac{1}{1}$
\put(26.1000,-7.8500){\makebox(0,0){{\color[named]{Black}{$\cfrac{1}{1}$}}}}%
% STR 2 0 3 0 Black White  
% 4 2800 1985 2800 2085 5 0 0 0
% $a_{n}$ is odd.
\put(28.0000,-20.8500){\makebox(0,0){{\color[named]{Black}{$a_{n}$ is odd.}}}}%
% STR 2 0 3 0 Black White  
% 4 4410 1995 4410 2095 5 0 0 0
% $a_{n}$ is even.
\put(44.1000,-20.9500){\makebox(0,0){{\color[named]{Black}{$a_{n}$ is even.}}}}%
% STR 2 0 3 0 Black White  
% 4 2350 1460 2350 1560 5 0 0 0
% $\cfrac{1}{1}$
\put(23.5000,-15.6000){\makebox(0,0){{\color[named]{Black}{$\cfrac{1}{1}$}}}}%
% LINE 0 0 3 0 Black White  
% 4 4600 1195 4400 595 4600 1195 4070 1615
% 
{\color[named]{Black}{%
\special{pn 20}%
\special{pa 4600 1195}%
\special{pa 4400 595}%
\special{fp}%
\special{pa 4600 1195}%
\special{pa 4070 1615}%
\special{fp}%
}}%
% LINE 0 0 3 0 Black White  
% 2 2800 600 3200 1800
% 
{\color[named]{Black}{%
\special{pn 20}%
\special{pa 2800 600}%
\special{pa 3200 1800}%
\special{fp}%
}}%
\end{picture}}%}
	\caption{Proof when $\cf{n-1}{} \not\equiv 1/1$. The bold line in each ancestral triangle represents its Seifert path.}
	\label{fig:proof_of_main_even_minus}
\end{figure}

If $a_n$ is odd, then both of $\fan_n$ and $\fan_{n-1}$ are on left side of the Seifert path. Thus we have $\trisign{}{(\tri_i^{(n-1)})} = (-1)^{i-1}$ and $\trisign{}{(\tri_i^{(n)})} = +1$. Suppose that $a_n = 1$. Since $\trisign{}{(\tri_{a_{n-1}+1}^{(n-1)})} = +1$, we get $\SFp{\cf{n-1}{+1}} = \SFp{\cf{n-1}{,1}}$ and $\alex{\cf{n-1}{,1}} = \SFp{\cf{n-1}{,1}}$ follows. Now Suppose that $a_n \ge 3$. To simplify notation, we write $0$ instead of $\cf{n}{}$, $1$ instead of ${\cf{n-1}{}}$, and $2$ instead of $\cf{n}{-2}$ in subscript. By induction on $a_n$, (\ref{eqn:F-polynomial_rec_even_2n1}), and Lemma \ref{lem:alexander_recursion} we have
\[
\begin{array}{rcl}
\alex{0} &=& \alex{2} - (t^{1/2} - t^{-1/2})\alex{1};\\
\SFp{0} &=& \SFp{2} + (1-t)\SFp{1};\\
\alex{2} &=& \Falex{2};
\end{array}
\]
and we have $\alexexp{0} = \alexexp{1} -1/2 = \alexexp{2}$ and $\alexsign{0} = \alexsign{1} = \alexsign{2}$. Therefore
\[
\begin{array}{rcl}
\Falex{0} &=& \alexsign{0}t^{\alexexp{0}}(\SFp{2} + (1-t)\SFp{1})\\
&=& \alexsign{2}t^{\alexexp{2}}\SFp{2} - \alexsign{1}t^{\alexexp{1}}(t^{1/2} - t^{-1/2})\SFp{1}\\
&=& \alex{2} - (t^{1/2} - t^{-1/2})\alex{1} = \alex{0}
\end{array}
\]
and the theorem holds.

If $a_n$ is even, then we have $\trisign{}{(\tri_i^{(n)})} = (-1)^{i}$. To simplify notation, we write $0$ instead of $\cf{n}{}$, $1$ instead of ${\cf{n-1}{}}$, and $2$ instead of $\cf{n}{-2}$ in subscript. By (\ref{eqn:F-polynomial_rec_even_2n1}), and Lemma \ref{lem:alexander_recursion}, we have
\[
\begin{array}{rcl}
\SFp{0} &=& (- t^{-1} + 1)\SFp{1} + \SFp{2};\\
\alex{0} &=& \alex{2} - (t^{1/2} - t^{-1/2})\alex{1};
\end{array}
\]
and we have $\alexexp{0}  = \alexexp{2} = \alexexp{1} + 1/2$ and $ = \alexsign{0} = \alexsign{2} = -\alexsign{1}$. Therefore we get
\[
\begin{array}{rcl}
\Falex{0} &=& \alexsign{0}t^{\alexexp{0}}((- t^{-1} + 1)\SFp{1} + \SFp{2})\\
&=& -\alexsign{1}t^{\alexexp{1}}(t^{1/2} - t^{-1/2})\SFp{1} + \alexsign{2}t^{\alexexp{2}}\SFp{2},\\
\end{array}
\]
and we can show that $\alex{0} = \Falex{0}$ in the same way when $n$ is odd, $\cf{n-1}{} \not\equiv 1/1$, and $a_n$ and $a_{n-1}$ are even. This completes the proof when $n$ is even, and the proof when $p/q \equiv 0/1 \text{ or } 1/0$ is complete.

Next we prove when $p/q \equiv 1/1$. Since $\AT{(q-p)/q}$ is the mirror image of $\AT{p/q}$ and $\fan_i$ in $\AT{p/q}$ corresponds to $\fan_{i+1}$ in $\AT{(q-p)/q}$, $\trisign{i}{}$ of $\AT{(q-p)/q}$ is equal to $\trisign{i}{}$ of $\AT{p/q}$. Let $N = \sum_{i=1}^na_i - 1$ be the number of triangles and $s_{p/q} = \sum_{i=1}^{N}\trisign{i}{}$ the sum of signs of triangles in $\AT{p/q}$.

We first prove following two equations:
\begin{align}
\label{eqn:alexexp}
\alexexp{[1,a_1-1,a_2,\dots,a_n]} + \alexexp{\cf{n}{}} &= -s_{p/q} = -s_{(q-p)/q},\\
\label{eqn:alexsign}
\alexsign{[1,a_1-1,a_2,\dots,a_n]}\alexsign{\cf{n}{}} &= (-1)^{N}.
\end{align}

The proof of (\ref{eqn:alexexp}) is in the same way as above. Since either $\cf{n}{}$ or $[1,a_1-1,a_2,\dots,a_n]$ is equivalent to $0/1$, it is sufficient to prove that (\ref{eqn:alexexp}) holds if $p/q \equiv 1/0 \text{ or } 0/1$. We can show that (\ref{eqn:alexexp}) holds in each cases above. Therefore by induction on $n$ and $a_n$, (\ref{eqn:alexexp}) holds.

We prove (\ref{eqn:alexsign}). Since $q$ is odd, we have
\[
\begin{array}{rcl}
\alexsign{[1,a_1-1,a_2,\dots,a_n]}\alexsign{\cf{n}{}} &=& (-1)^{n+1 + \sum_{i\text{:odd}} a_i - 1 - (q-p)q}(-1)^{n + \sum_{i\text{:even}}a_i - pq}\\
&=& (-1)^{n+1 + \sum_{i\text{:odd}} a_i - 1 - (q-p)q + n + \sum_{i\text{:even}}a_i - pq}\\
&=& (-1)^{\sum_{i}a_i + 2n - q^2}\\
&=& (-1)^{\sum_{i}a_i - 1} = (-1)^{N}.
\end{array}
\]

By Corollary \ref{cor:mirror_F-polynomial}, we have
\[
\SFp{[1,a_1-1,a_2,\dots, a_n]} = (-1)^Nt^{s_{p/q}} ~ \SFp{\cf{n}{}}(t^{-1}).
\]
On the other hand, it is known that Alexander polynomial of a knot is equal to that of its mirror image. Since $\tb((q-p)/q)$ is the mirror image of $\tb(p/q)$, we have $\alex{(q-p)/q}(t) = \alex{p/q}(t)$. By symmetry of Alexander polynomial, we have $\alex{(q-p)/q}(t) = \alex{(q-p)/q}(t^{-1})$. Therefore we get
\[
\begin{array}{rcl}
\Falex{p/q} &=& \alexsign{p/q}t^{\alexexp{p/q}}(-1)^Nt^{s_{p/q}} ~ \SFp{(q-p)/q}(t^{-1})\\
&=& \alexsign{(q-p)/q}t^{-\alexexp{(q-p)/q}} ~ \SFp{(q-p)/q}(t^{-1})\\
&=&\alex{(q-p)/q}(t^{-1})\\
&=& \alex{(q-p)/q}(t) = \alex{p/q}(t),
\end{array}
\]
which complete the proof.
\end{proof}

\begin{rem}
After finding the specialization of F-polynomial, it is possible to prove this
theorem using another recurrence of Alexander polynomial such as \cite{K}
instead of the skein relation.
\end{rem}

\begin{figure}
	\centering
	{%WinTpicVersion4.32a
{\unitlength 0.1in%
\begin{picture}(17.4000,20.3500)(4.6000,-26.0000)%
% LINE 2 0 3 0 Black White  
% 12 1600 800 2200 2600 2200 2600 1000 2600 1000 2600 1600 800 1400 1400 1900 1700 1900 1700 1000 2600 1000 2600 2050 2150
% 
{\color[named]{Black}{%
\special{pn 8}%
\special{pa 1600 800}%
\special{pa 2200 2600}%
\special{fp}%
\special{pa 2200 2600}%
\special{pa 1000 2600}%
\special{fp}%
\special{pa 1000 2600}%
\special{pa 1600 800}%
\special{fp}%
\special{pa 1400 1400}%
\special{pa 1900 1700}%
\special{fp}%
\special{pa 1900 1700}%
\special{pa 1000 2600}%
\special{fp}%
\special{pa 1000 2600}%
\special{pa 2050 2150}%
\special{fp}%
}}%
% STR 2 0 3 0 Black White  
% 4 1600 530 1600 630 5 0 0 0
% $\cfrac{2}{7}$
\put(16.0000,-6.3000){\makebox(0,0){{\color[named]{Black}{$\cfrac{2}{7}$}}}}%
% STR 2 0 3 0 Black White  
% 4 1200 1300 1200 1400 5 0 0 0
% $\cfrac{1}{4}$
\put(12.0000,-14.0000){\makebox(0,0){{\color[named]{Black}{$\cfrac{1}{4}$}}}}%
% STR 2 0 3 0 Black White  
% 4 2100 1600 2100 1700 5 0 0 0
% $\cfrac{1}{3}$
\put(21.0000,-17.0000){\makebox(0,0){{\color[named]{Black}{$\cfrac{1}{3}$}}}}%
% STR 2 0 3 0 Black White  
% 4 2250 2050 2250 2150 5 0 0 0
% $\cfrac{1}{2}$
\put(22.5000,-21.5000){\makebox(0,0){{\color[named]{Black}{$\cfrac{1}{2}$}}}}%
% STR 2 0 3 0 Black White  
% 4 2400 2530 2400 2630 5 0 0 0
% $\cfrac{1}{1}$
\put(24.0000,-26.3000){\makebox(0,0){{\color[named]{Black}{$\cfrac{1}{1}$}}}}%
% STR 2 0 3 0 Black White  
% 4 800 2530 800 2630 5 0 0 0
% $cfrac{0}{1}$
\put(8.0000,-26.3000){\makebox(0,0){{\color[named]{Black}{$\cfrac{0}{1}$}}}}%
% LINE 0 0 3 0 Black White  
% 2 1600 800 1000 2600
% 
{\color[named]{Black}{%
\special{pn 20}%
\special{pa 1600 800}%
\special{pa 1000 2600}%
\special{fp}%
}}%
\end{picture}}%}
	\caption{$\AT{2/7}$ and its Seifert path.}
	\label{fig:example_of_main27}
\end{figure}
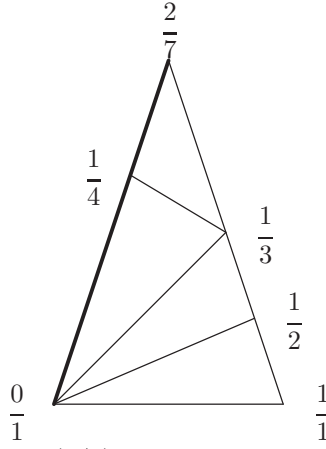

\begin{ex}
The ancestral triangle of $2/7 = [3,2]$ is shown in Figure \ref{fig:example_of_main27}. Therefore $\fan_1$ consists of $\tri_1$ and $\tri_2$ and $\fan_2$ consists of $\tri_3$ and $\tri_4$. By Theorem \ref{thm:cluster_expansion_formula}, we have
\[
\Fp{2/7} = 1 + y_3 + y_2y_3 + y_1y_2y_3 + y_3y_4 + y_2y_3y_4 + y_1y_2y_3y_4
\]
Since the bottom edge of $\fan_1$ is not in the Seifert path and $\fan_1$ is on the right side of the Seifert path, we have
\[
\begin{array}{rcccl}
\trisign{}{(\tri_1^{(1)})} &=& \trisign{}{(\tri_1)} &=& +1;\\
\trisign{}{(\tri_2^{(1)})} &=& \trisign{}{(\tri_2)} &=& -1.
\end{array}
\]
Since the bottom edge of $\fan_2$ is not in the Seifert path and $\fan_2$ is on the right side of the Seifert path, we have
\[
\begin{array}{rcccl}
\trisign{}{(\tri_1^{(2)})} &=& \trisign{}{(\tri_3)} &=& +1;\\
\trisign{}{(\tri_2^{(2)})} &=& \trisign{}{(\tri_4)} &=& +1.
\end{array}
\]
Therefore by substituting $-t$ for $y_1$, $-t^{-1}$ for $y_2$, $-t$ for $y_3$ and $-t$ for $y_4$, we get
\[
\SFp{2/7} = 1 - t + 1 - t + t^2 - t + t^2 = 2 - 3t + t^2.
\]
Next we compute $d$. Since $t_1 = +1$ and $t_2 = t_1 = +1$ by definition, we have $\alexexp{}(1) = 1$ and $\alexexp{}(2) = 2 - 1 = 1$. Therefore we get
\[
\alexexp{} = -\cfrac{1}{2}(\alexexp{}(1) + \alexexp{}(2)) = -1
\]
and
\[
\alexsign{}t^\alexexp{}\SFp{2/7} = (-1)^{2 + 2 - 2 \times 7}t^{-1}(2 - 3t + t^2) = 2t^{-1} - 3 + 2t.
\]
On the other hand, it is known that the Alexander polynomial $\alex{2/7} = 2t - 3 + 2t^{-1}$.
\end{ex}

\begin{ex}
We compute the Alexander polynomial of $\tb(3/5) = \tb([1,1,2])$. By Example \ref{exmp:cluster_mutation}, we have
\[
\Fp{3/5} = 1 + y_1 + y_3 + y_1y_3 + y_1y_2y_3.
\]
According to on the left of Figure \ref{fig:example_of_main}, $\SFp{3/5}$ is obtained by substituting $-t$ for $y_1$, $-t^{-1}$ for $y_2$, and $-t$ for $y_3$, namely
\[
\SFp{3/5} = 1 -3t +t^2.
\]
Since $t_1 = t_2 = -1$ and $t_3 = +1$, we have
$\alexexp{}(1) = 1 - 1 = 0$, $\alexexp{}(2) = 1$, and $\alexexp{}(3) = t_3 = 1$. Terefore we have $\alexexp{} = -(0 + 1 + 1)/2 = -1$ and we get
\[
\begin{array}{rcl}
\Falex{3/5} &=& (-1)^{3 + 1 - 3\times5}t^{-1}(1 -3t +t^2)\\
&=& -t^{-1} + 3 -t,
\end{array}
\]
which is equal to $\alex{3/5}$ as we computed in Example \ref{exmp:alexander_polynomial}. 
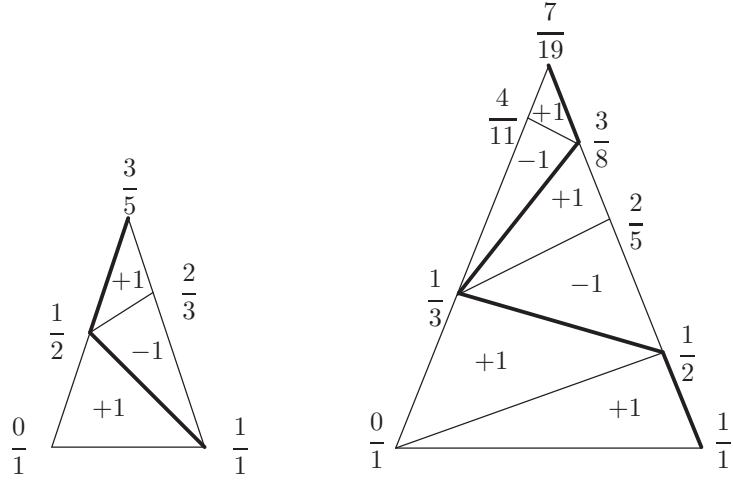
\begin{figure}
	\centering
	{%WinTpicVersion4.32a
{\unitlength 0.1in%
\begin{picture}(39.4000,22.6500)(8.6000,-24.0500)%
% LINE 2 0 3 0 Black White  
% 4 2200 2400 1600 1800 1600 1800 1930 1590
% 
{\color[named]{Black}{%
\special{pn 8}%
\special{pa 2200 2400}%
\special{pa 1600 1800}%
\special{fp}%
\special{pa 1600 1800}%
\special{pa 1930 1590}%
\special{fp}%
}}%
% LINE 2 0 3 0 Black White  
% 6 1800 1200 1400 2400 1400 2400 2200 2400 2200 2400 1800 1200
% 
{\color[named]{Black}{%
\special{pn 8}%
\special{pa 1800 1200}%
\special{pa 1400 2400}%
\special{fp}%
\special{pa 1400 2400}%
\special{pa 2200 2400}%
\special{fp}%
\special{pa 2200 2400}%
\special{pa 1800 1200}%
\special{fp}%
}}%
% LINE 0 0 3 0 Black White  
% 4 1800 1200 1600 1800 1600 1800 2200 2400
% 
{\color[named]{Black}{%
\special{pn 20}%
\special{pa 1800 1200}%
\special{pa 1600 1800}%
\special{fp}%
\special{pa 1600 1800}%
\special{pa 2200 2400}%
\special{fp}%
}}%
% STR 2 0 3 0 Black White  
% 4 1800 920 1800 1020 5 0 0 0
% $\cfrac{3}{5}$
\put(18.0000,-10.2000){\makebox(0,0){{\color[named]{Black}{$\cfrac{3}{5}$}}}}%
% STR 2 0 3 0 Black White  
% 4 2110 1470 2110 1570 5 0 0 0
% $\cfrac{2}{3}$
\put(21.1000,-15.7000){\makebox(0,0){{\color[named]{Black}{$\cfrac{2}{3}$}}}}%
% STR 2 0 3 0 Black White  
% 4 1420 1680 1420 1780 5 0 0 0
% $\cfrac{1}{2}$
\put(14.2000,-17.8000){\makebox(0,0){{\color[named]{Black}{$\cfrac{1}{2}$}}}}%
% STR 2 0 3 0 Black White  
% 4 1220 2280 1220 2380 5 0 0 0
% $\cfrac{0}{1}$
\put(12.2000,-23.8000){\makebox(0,0){{\color[named]{Black}{$\cfrac{0}{1}$}}}}%
% STR 2 0 3 0 Black White  
% 4 2380 2280 2380 2380 5 0 0 0
% $\cfrac{1}{1}$
\put(23.8000,-23.8000){\makebox(0,0){{\color[named]{Black}{$\cfrac{1}{1}$}}}}%
% STR 2 0 3 0 Black White  
% 4 1800 1430 1800 1530 5 0 0 0
% $+1$
\put(18.0000,-15.3000){\makebox(0,0){{\color[named]{Black}{$+1$}}}}%
% STR 2 0 3 0 Black White  
% 4 1900 1800 1900 1900 5 0 0 0
% $-1$
\put(19.0000,-19.0000){\makebox(0,0){{\color[named]{Black}{$-1$}}}}%
% STR 2 0 3 0 Black White  
% 4 1700 2100 1700 2200 5 0 0 0
% $+1$
\put(17.0000,-22.0000){\makebox(0,0){{\color[named]{Black}{$+1$}}}}%
% LINE 2 0 3 0 Black White  
% 2 4160 815 3890 675
% 
{\color[named]{Black}{%
\special{pn 8}%
\special{pa 4160 815}%
\special{pa 3890 675}%
\special{fp}%
}}%
% LINE 2 0 3 0 Black White  
% 2 3520 1605 4320 1205
% 
{\color[named]{Black}{%
\special{pn 8}%
\special{pa 3520 1605}%
\special{pa 4320 1205}%
\special{fp}%
}}%
% LINE 2 0 3 0 Black White  
% 2 4000 405 3200 2405
% 
{\color[named]{Black}{%
\special{pn 8}%
\special{pa 4000 405}%
\special{pa 3200 2405}%
\special{fp}%
}}%
% LINE 2 0 3 0 Black White  
% 2 4800 2405 4000 405
% 
{\color[named]{Black}{%
\special{pn 8}%
\special{pa 4800 2405}%
\special{pa 4000 405}%
\special{fp}%
}}%
% STR 2 0 3 0 Black White  
% 4 4950 2095 4950 2195 4 0 0 0
% $\cfrac{1}{1}$
\put(49.5000,-21.9500){\makebox(0,0)[rt]{{\color[named]{Black}{$\cfrac{1}{1}$}}}}%
% STR 2 0 3 0 Black White  
% 4 3050 2095 3050 2195 1 0 0 0
% $\cfrac{0}{1}$
\put(30.5000,-21.9500){\makebox(0,0)[lt]{{\color[named]{Black}{$\cfrac{0}{1}$}}}}%
% STR 2 0 3 0 Black White  
% 4 4720 1795 4720 1895 5 0 0 0
% $\cfrac{1}{2}$
\put(47.2000,-18.9500){\makebox(0,0){{\color[named]{Black}{$\cfrac{1}{2}$}}}}%
% STR 2 0 3 0 Black White  
% 4 3400 1505 3400 1605 5 0 0 0
% $\cfrac{1}{3}$
\put(34.0000,-16.0500){\makebox(0,0){{\color[named]{Black}{$\cfrac{1}{3}$}}}}%
% STR 2 0 3 0 Black White  
% 4 4450 1105 4450 1205 5 0 0 0
% $\cfrac{2}{5}$
\put(44.5000,-12.0500){\makebox(0,0){{\color[named]{Black}{$\cfrac{2}{5}$}}}}%
% STR 2 0 3 0 Black White  
% 4 4270 675 4270 775 5 0 0 0
% $\cfrac{3}{8}$
\put(42.7000,-7.7500){\makebox(0,0){{\color[named]{Black}{$\cfrac{3}{8}$}}}}%
% STR 2 0 3 0 Black White  
% 4 3750 555 3750 655 5 0 0 0
% $\cfrac{4}{11}$
\put(37.5000,-6.5500){\makebox(0,0){{\color[named]{Black}{$\cfrac{4}{11}$}}}}%
% STR 2 0 3 0 Black White  
% 4 4000 105 4000 205 5 0 0 0
% $\cfrac{7}{19}$
\put(40.0000,-2.0500){\makebox(0,0){{\color[named]{Black}{$\cfrac{7}{19}$}}}}%
% LINE 2 0 3 0 Black White  
% 2 3200 2405 4800 2405
% 
{\color[named]{Black}{%
\special{pn 8}%
\special{pa 3200 2405}%
\special{pa 4800 2405}%
\special{fp}%
}}%
% LINE 2 0 3 0 Black White  
% 2 3200 2405 4600 1905
% 
{\color[named]{Black}{%
\special{pn 8}%
\special{pa 3200 2405}%
\special{pa 4600 1905}%
\special{fp}%
}}%
% LINE 2 0 3 0 Black White  
% 2 4600 1905 3530 1595
% 
{\color[named]{Black}{%
\special{pn 8}%
\special{pa 4600 1905}%
\special{pa 3530 1595}%
\special{fp}%
}}%
% LINE 2 0 3 0 Black White  
% 2 3530 1595 4160 795
% 
{\color[named]{Black}{%
\special{pn 8}%
\special{pa 3530 1595}%
\special{pa 4160 795}%
\special{fp}%
}}%
% LINE 0 0 3 0 Black White  
% 8 4000 400 4158 801 4158 801 3530 1594 3530 1594 4598 1903 4598 1903 4800 2400
% 
{\color[named]{Black}{%
\special{pn 20}%
\special{pa 4000 400}%
\special{pa 4158 801}%
\special{fp}%
\special{pa 4158 801}%
\special{pa 3530 1594}%
\special{fp}%
\special{pa 3530 1594}%
\special{pa 4598 1903}%
\special{fp}%
\special{pa 4598 1903}%
\special{pa 4800 2400}%
\special{fp}%
}}%
% STR 2 0 3 0 Black White  
% 4 4400 2100 4400 2200 5 0 0 0
% $+1$
\put(44.0000,-22.0000){\makebox(0,0){{\color[named]{Black}{$+1$}}}}%
% STR 2 0 3 0 Black White  
% 4 3700 1850 3700 1950 5 0 0 0
% $+1$
\put(37.0000,-19.5000){\makebox(0,0){{\color[named]{Black}{$+1$}}}}%
% STR 2 0 3 0 Black White  
% 4 4200 1450 4200 1550 5 0 0 0
% $-1$
\put(42.0000,-15.5000){\makebox(0,0){{\color[named]{Black}{$-1$}}}}%
% STR 2 0 3 0 Black White  
% 4 4100 1000 4100 1100 5 0 0 0
% $+1$
\put(41.0000,-11.0000){\makebox(0,0){{\color[named]{Black}{$+1$}}}}%
% STR 2 0 3 0 Black White  
% 4 3920 800 3920 900 5 0 0 0
% $-1$
\put(39.2000,-9.0000){\makebox(0,0){{\color[named]{Black}{$-1$}}}}%
% STR 2 0 3 0 Black White  
% 4 4000 550 4000 650 5 0 0 0
% $+1$
\put(40.0000,-6.5000){\makebox(0,0){{\color[named]{Black}{$+1$}}}}%
\end{picture}}%}
	\caption{$\AT{3/5}$ (left) and $\AT{7/19}$ (right). The bold lines represent the Seifert paths. The number in each triangle is the sign of it.}
	\label{fig:example_of_main}
\end{figure}
\end{ex}

\begin{ex}
By Theorem \ref{thm:cluster_expansion_formula}, we have
\[
\begin{split}
\Fp{7/19} &= 1 +  y_5  +  y_2  +  y_5y_6  +  y_4y_5  +  y_2y_5  +  y_1y_2  +  y_4y_5y_6  +  y_2y_5y_6  +  y_2y_4y_5\\
&\quad  +  y_1y_2y_5  +  y_2y_4y_5y_6  +  y_1y_2y_5y_6  +  y_2y_3y_4y_5  +  y_1y_2y_4y_5  +  y_2y_3y_4y_5y_6\\
&\quad  +  y_1y_2y_4y_5y_6  +  y_1y_2y_3y_4y_5  + y_1y_2y_3y_4y_5y_6.
\end{split}
\]
According to Figure \ref{fig:example_of_main}, $\SFp{7/19}$ is obtained by setting $y_1 = y_2 = y_4 = y_6 = -t$ and $y_3 = y_5 = -t^{-1}$, then we get
\[
\SFp{7/19} = -t^{-1} + 5 -7t + 5t^2 -t^3.
\]
Since $t_1 = t_2 = t_4 = -1$ and $t_3 = +1$ by Example \ref{exmp:sign_of_crossing}, we have $\alexexp{}(1) = 2 - 1 = 1$, $\alexexp{}(2) = 1$, $\alexexp{}(3) = t_3 = +1$, and $\alexexp{}(4) = t_4 = -1$. Therefore $d = -(1 + 1 + 1 - 1)/2 = -1$ and we get
\[
\begin{array}{rcl}
\Falex{7/19} &=& (-1)^{4 + 1 + 2 - 7\times19}t^{-1}(-t^{-1} + 5 -7t + 5t^2 -t^3)\\
&=& -t^{-2} + 5t^{-1} -7 + 5t - t^2.
\end{array}
\]
On the other hand, since it is known that $\alex{7/19} = -t^{-2} + 5t^{-1} -7 + 5t - t^2$, we have $\Falex{7/19} = \alex{7/19}$.
\end{ex}

{}
\vspace{2cm}
\end{document}